\theoremstyle{plain} %% This is the default
\numberwithin{equation}{section}
\newtheorem{thm}{Theorem}[section]
\newtheorem{cor}[thm]{Corollary}
\newtheorem{lem}[thm]{Lemma}
\newtheorem{prop}[thm]{Proposition}
\newtheorem{defn}[thm]{Definition}
\theoremstyle{remark}
\newtheorem{rem}[thm]{Remark}
\newtheorem*{ack}{Acknowledgment}
\def\la{{\langle}}
\def\ra{{\rangle}}
\def\f{\frac}
\def\({\left(}
\def \){ \right)}
\def\[{\left[}
\def \]{ \right]}
\def\Bl{\Bigl}
\def\Br{\Bigr}
\def\DL{D_{L^2, \text{cap}}}
 \def\Ga{\Gamma}
\def\ta{\theta}
\def\al{{\alpha}}
\def\be{{\beta}}
\def\da{{\delta}}
\def\sa{{\sigma}}
 \def\t{{\theta}}
 \def\l{{\lambda}}
 \def\s{{\sigma}}
 \def\va{\varepsilon}
 \def\NN{{\mathbb N}}
 \def\RR{{\mathbb R}}
 \def\SS{{\mathbb S}}
  \def\sph{\mathbb{S}^{d}}
\newcommand{\wh}{\widehat}
\begin{document}

\def\diam{\text{diam}}
\def\p{\partial}
\def\ld{\lambda}
\def\bl{\bigl}
\def\br{\bigr}
\def\og{\omega}
\def\Ld{\Lambda}
%\usepackage{amsmath,amssymb}
%% my poor man's solution to arc notation
%\newcommand{\tarc}{\mbox{\large$\frown$}}
%\newcommand{\arc}[1]{\stackrel{\tarc}{#1}}
%%%
%\pagestyle{empty}

%\title[]{}\maketitle

\title{Geodesic distance Riesz energy on the sphere}

\author{Dmitriy Bilyk}
\address{School of Mathematics, University of Minnesota, Minneapolis, MN 55408, USA.}
\email{dbilyk@math.umn.edu}

\author{Feng Dai}
\address{Department of Mathematical and Statistical Sciences\\
University of Alberta\\ Edmonton, Alberta T6G 2G1, Canada.}
\email{fdai@ualberta.ca}

\begin{abstract}
We study energy integrals and discrete energies on the sphere, in particular, analogs of the  Riesz energy with  the geodesic distance in place of Euclidean, and observe that the range of exponents for which the uniform distribution optimizes such energies is different from the classical case. We also obtain a general form of the Stolarsky principle, which relates discrete energies to certain $L^2$ discrepancies. This leads to new proofs of discrepancy estimates, as well as the sharp asymptotics of the difference between optimal discrete and continuous energies in the geodesic case.
\end{abstract}

\maketitle

\section{Introduction}

In the present paper we study optimization properties of some  energy integrals and discrete energies on the unit  sphere $\mathbb S^d \subset \mathbb R^{d+1}$, in particular those  related to the geodesic distance on the sphere. \\ %The  general formulation of the problem is as follows.

Let  $\mathcal{B}$   denote the collection of all Borel probability measures on $\sph$. %Let   $\da_{x_0} $ the Dirac Borel  probability measure supported at $x_0\in\sph$.
 Given a measure  $\mu\in\mathcal{B}$,
 define the energy integral $I_F (\mu )$ of a measurable (non-negative or bounded) function $F:[-1,1]\to\RR$
 by
\begin{equation}\label{EI}
I_F (\mu  ):=\int_{\sph}\int_{\sph} F(x\cdot y)\, d\mu(x) \, d\mu(y).
\end{equation}
We are  interested in finding the optimal (maximal or minimal, depending on $F$) values of $I_F (\mu  )$ over $\mu \in \mathcal B$, as well as extremal  measures $\mu$ for which this values are achieved, i.e. equilibrium distributions with respect to $F$.  In particular, it is natural to investigate whether the Lebesgue surface measure $\sigma$ normalized by $\sigma (\sph) =1$ is a minimizer (maximizer), and if so, whether it is unique --  in other words, whether  optimizing the  energy with potential $F$ induces   uniform distribution.\\

For a finite set of $N$  points $Z= \{ z_1, \dots, z_N\}$ in $\sph$, its discrete energy with respect to $F$ is defined as
\begin{equation}\label{DE}
 E_F (Z  ) = \sum_{1\le i <j \le N} F(z_i \cdot z_j) .
 \end{equation}
  Note that in the case when $F(1) = 0$, we have $$E_F (Z)  = \frac12 N^2 I_F (\mu  ) \,\,\, \textup{ with } \,\,\, \mu = \f1N\sum_{i=1}^N \delta_{z_i},$$ where $\delta_x$ is the Dirac mass at $x \in \mathbb S^d$.  Appropriate modifications accounting for the diagonal terms should be made in the general case. One is interested in optimizing  the discrete energy for a given  $N$, analyzing extremal $N$-point configurations, comparing optimal values of the discrete energy to the optimal energy integral, and finding asymptotic behavior of this difference.

Such problems  arise naturally in various fields, e.g., in  electrostatics, in the study of  equilibrium distributions of charges  which repel according to the law given by $F$. One of the most natural choices of the potential   is the so-called Riesz potential $F(x\cdot y) = | x-y |^{-s}$, where $|x-y|$ is the Euclidean distance between $x$ and $y$ in $\mathbb R^{d+1}$. In particular, for $d=2$ and $s=1$, minimizing the energy \eqref{DE} amounts to finding the equilibrium (according to Coulomb's Law) distributions of $N$ electrons on the sphere. This situation has been studied extensively and numerous questions posed above are well understood in this case \cite{Bjorck,BHS,Brauchart,KS,schoen,wagner1,wagner2}, although  precise optimal discrete distributions are still elusive for most values of $N$.

While our present work establishes many general facts and relations,  we primarily  concentrate on the case which is seemingly similar to the classical  Riesz energy, but uses geodesic, rather than Euclidean, distance in the definition of energy.  This object naturally arose   in the companion paper of the authors \cite{BD1} in relation to discrepancy theory and Stolarsky principle, and has also been considered previously \cite{F-T,Sperling, Larcher,BHS}. To make things precise, let $\rho (x,y)$ denote the geodesic distance between $x$ and $y$ on $\mathbb S^d$, %normalized so that  the distance between antipodal poles is $1$,
i.e.
\begin{equation}
\rho (x,y) = %\frac{1}{\pi}
\arccos ( x\cdot y ).
\end{equation}
We shall consider energies defined by the function
\begin{equation}
F_\delta (x\cdot y) = \big( \rho (x,y) \big)^\delta, \,\,\, \textup{ i.e. } \,\,\, F_\delta ( t ) = (\arccos t)^\delta,
\end{equation}
for an arbitrary  $\delta \in \mathbb R\setminus \{0\}$; for $\delta = 0$, the standard modification is the logarithmic potential $ F_0 (t) =   - \log \big( \frac{1}{\pi} \arccos t \big)$.  We would like to characterize extremizers of the energy integral
\begin{equation}\label{gdei}
I_{d,\delta} (\mu ) =  I_{F_\delta} (\mu) %= \int_{\sph}\int_{\sph} F_\da(x\cdot y)\, d\mu(x)d\mu(y)
= \int_{\sph}\int_{\sph}   \big( \rho(x,y) \big)^\delta   \, d\mu(x)d\mu(y),
\end{equation}
which we shall refer to as the {\emph{geodesic distance (Riesz) energy}}. Naturally one is interested in minimizers when $\delta \le 0$ and maximizers for $\delta >0$.

One may expect that the behavior  of the geodesic distance energy should be similar to its Euclidean counterpart, i.e. the standard Riesz energy.  Perhaps surprisingly, this is not quite the case. In dimension $d=1$ (on the circle) this phenomenon has been previously  observed  in \cite{BHSd}: in the geodesic case the uniform distribution $\sigma$ ceases to be the unique extremizer of $I_{d,\delta}$ when $\delta \ge 1$, while in the case of Riesz energy the analogous critical value is $\delta = 2$.

In the present work together with  our companion paper \cite{BD1} we prove this  fact in all dimensions $d\ge 1$. More precisely, we prove the following theorem:
\begin{thm}\label{GeodExt}
Let $I_{d,\delta}   (\mu)$ be the geodesic distance energy integral on $\sph$, with $\delta \in \mathbb R$, as defined in \eqref{gdei}.
%\begin{equation}
%I (\mu) = \int\limits_{\mathbb S^d} \int\limits_{\mathbb S^d} \big( d(x,y) \big)^\delta  \, d\mu (x) d\mu (y)
%\end{equation}
The extremizers  of this energy integral over  $\mu\in \mathcal B$  can be characterized as follows:
\begin{enumerate}[(i)]
\item\label{G1} $-d < \delta \le 0$: the unique minimizer of $I_{d,\delta} (\mu)$ is $\mu = \sigma$ (the normalized surface measure).
\item\label{G2} $0 < \delta < 1$: the unique maximizer of $I_{d,\delta} (\mu)$ is $\mu = \sigma$.
\item\label{G3} $\delta = 1$:  $I_{d,\delta} (\mu)$ is maximized if and only if  $\mu$ is centrally symmetric .
\item\label{G4} $\delta > 1$: $I_{d,\delta} (\mu)$ is maximized if and only if  $\mu = \frac12 (\delta_p + \delta_{-p})$, i.e. the mass is equally  concentrated at two antipodal poles.
\end{enumerate}
\end{thm}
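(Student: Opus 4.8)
The plan is to reduce everything to the analysis of a single function of one variable, namely the restriction of $F_\delta$ to the interval of admissible inner products, together with a Fourier-analytic (Gegenbauer/ultraspherical) expansion of $F_\delta$. The key observation is the standard one: for $\mu \in \mathcal B$ with ultraspherical coefficients $\widehat{\mu}(n,k)$, one has
\begin{equation}
I_{d,\delta}(\mu) = \sum_{n=0}^\infty a_n(\delta)\, \sum_{k} |\widehat{\mu}(n,k)|^2,
\end{equation}
where $a_n(\delta)$ are the Gegenbauer coefficients of $F_\delta(t)=(\arccos t)^\delta$ with respect to the weight $(1-t^2)^{d/2-1}$, and $a_0(\delta)=I_{d,\delta}(\sigma)$. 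Since the $n=0$ term is $a_0(\delta)$ for every $\mu$, the uniform measure $\sigma$ is the unique minimizer (resp. maximizer) precisely when $a_n(\delta)>0$ (resp. $a_n(\delta)<0$) for all $n\ge 1$; and when some $a_n$ vanish or have the "wrong" sign, the extremizers are governed by which spherical harmonics are allowed to be nonzero. So the whole theorem rests on computing the sign of $a_n(\delta)$ as a function of $\delta$.

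The main technical step — and the one I expect to be the principal obstacle — is establishing these sign patterns for $(\arccos t)^\delta$. I would handle the ranges separately. For $-d<\delta\le 0$ (part \eqref{G1}) and $0<\delta<1$ (part \eqref{G2}), I would try to show $F_\delta$ is (strictly) positive definite, resp. negative definite modulo constants, on $\mathbb S^d$; a clean route is an integral representation writing $(\arccos t)^\delta$ (or its negative, up to an additive constant) as a superposition with a one-signed density of kernels already known to be positive definite — e.g. integrating the known expansions for powers of the chordal distance $|x-y|=2\sin(\rho/2)$, or using the subordination $\rho^\delta = c_\delta \int_0^\infty (1-e^{-u\rho^2})\,u^{-1-\delta/2}\,du$ for $0<\delta<2$ combined with positive definiteness of the heat kernel $e^{-u\rho(x,y)^2}$ on the sphere (this is where the restriction $\delta<1$, not $\delta<2$, must emerge — presumably the heat kernel on $\mathbb S^d$ is \emph{not} positive definite in the relevant sense, or the relevant one-signedness fails, beyond $\delta=1$; the companion paper \cite{BD1} and \cite{BHSd} suggest the sharp threshold is genuinely $1$). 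The constraint $\delta>-d$ is exactly the condition for $I_{d,\delta}(\sigma)=a_0(\delta)$ to be finite, i.e. for $(\arccos t)^\delta (1-t^2)^{d/2-1}$ to be integrable near $t=1$.

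For $\delta=1$ (part \eqref{G3}) the function is simply $F_1(t)=\arccos t$, whose Gegenbauer coefficients can be computed in closed form: one finds $a_n(1)=0$ for all odd $n\ge 1$ and $a_n(1)<0$ for all even $n\ge 2$ (this is the classical expansion of $\arccos$, and it is where dimension enters only through normalizing constants). Hence $I_{d,1}(\mu)=a_0(1)-\sum_{n\ge 2\text{ even}}|a_n(1)|\sum_k|\widehat\mu(n,k)|^2$ is maximized exactly when all \emph{even-degree} ($n\ge 2$) coefficients vanish, i.e. when $\mu$ has no nontrivial even harmonics, which is precisely the statement that $\mu$ is invariant under the antipodal map — the centrally symmetric measures. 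For $\delta>1$ (part \eqref{G4}) I would argue that the sign of the even coefficients is unchanged, $a_n(\delta)<0$ for even $n\ge 2$, but now $a_n(\delta)\le 0$ with strict negativity also forced for odd $n$, or more cleanly: I would show directly that among centrally symmetric competitors the energy $I_{d,\delta}(\mu)=\int\int(\arccos t)^\delta\,d\nu$, pushed forward to the variable $t=x\cdot y\in[-1,1]$, is maximized by concentrating all mass of the pushforward at $t=-1$ (since $(\arccos t)^\delta=\rho^\delta$ is, for $\delta>1$, a strictly convex increasing function of $\rho\in[0,\pi]$, and the antipodal atom $\frac12(\delta_p+\delta_{-p})$ realizes the extreme pushforward); a short rigidity argument — using strict convexity for $\delta>1$ versus affinity at $\delta=1$ — then gives uniqueness of the two-point measure. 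I would present \eqref{G3}–\eqref{G4} first from the convexity viewpoint, since it is self-contained, and then reconcile it with the coefficient computation, which is what is needed for \eqref{G1}–\eqref{G2}.
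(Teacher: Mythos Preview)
Your overall framework is right: expand $F_\delta$ in Gegenbauer polynomials and read off the signs of the coefficients $a_n(\delta)$. But two of your proposed implementations have genuine problems.

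\textbf{Parity in part \eqref{G3}.} You have the parity backwards. Since $\arccos t=\tfrac{\pi}{2}-\arcsin t$ and $\arcsin$ is odd, the function $F_1(t)-\tfrac{\pi}{2}$ is \emph{odd}. Using $C_n^\lambda(-t)=(-1)^n C_n^\lambda(t)$ one finds $a_n(1)=0$ for all \emph{even} $n\ge 2$ and $a_n(1)<0$ for all \emph{odd} $n\ge 1$ (this is exactly the situation of the paper's Lemma~\ref{l.aux}(ii)). Correspondingly, maximizers are measures whose \emph{odd} harmonics vanish, and that --- not vanishing of even harmonics --- is the characterization of centrally symmetric measures (because $Y_n(-x)=(-1)^nY_n(x)$). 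Your stated conclusion is the right one, but your coefficient computation and your ``no nontrivial even harmonics'' criterion are both inverted.

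\textbf{The sign computation for parts \eqref{G1}--\eqref{G2}.} The subordination route via $e^{-u\rho(x,y)^2}$ does not work as written: $e^{-u\rho^2}$ is \emph{not} the heat kernel on $\mathbb S^d$, and its positive definiteness on the sphere is neither standard nor, as you yourself note, does it explain why the threshold is $\delta=1$ rather than $\delta=2$. The paper's argument is different and shows exactly where $\delta=1$ enters. One writes $\arccos t=\tfrac{\pi}{2}-A(t)$ with $A(t)=\arcsin t$, whose Maclaurin coefficients are all positive. Then, since $|A(t)|<\tfrac{\pi}{2}$ on $(-1,1)$, the binomial expansion
\[
(\arccos t)^\delta=\Bigl(\tfrac{\pi}{2}\Bigr)^\delta\sum_{j\ge 0}\binom{\delta}{j}\Bigl(-\tfrac{2A(t)}{\pi}\Bigr)^j
\]
has, for $j\ge 1$, coefficients $(-1)^j\binom{\delta}{j}=-\tfrac{\delta(1-\delta)\cdots(j-1-\delta)}{j!}$, which are all \emph{negative} precisely when $0<\delta<1$ and all \emph{positive} when $\delta<0$. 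Composing with the positive-coefficient series $A(t)$ gives a Maclaurin series for $F_\delta(t)-(\pi/2)^\delta$ with one-signed coefficients; combining with the elementary fact that $\int_{-1}^1 t^k C_n^\lambda(t)(1-t^2)^{\lambda-1/2}\,dt\ge 0$ (Rodrigues' formula) yields the required sign of $a_n(\delta)$ for every $n\ge 1$. This is Lemma~\ref{lem-2-1} in the paper, and it is the step you are missing.

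For part \eqref{G4} your convexity idea is roughly the right spirit, but the clean reduction (also used in the companion paper) is: from part \eqref{G3} one has $I_{d,1}(\mu)\le\pi/2$ with equality exactly for symmetric $\mu$; then for $\delta>1$ use $\rho^\delta\le\pi^{\delta-1}\rho$ (equality only at $\rho\in\{0,\pi\}$) to get $I_{d,\delta}(\mu)\le\pi^{\delta-1}I_{d,1}(\mu)\le\pi^\delta/2$, with equality forcing $\operatorname{supp}\mu\subset\{p,-p\}$ and $\mu$ symmetric. Your ``pushforward'' phrasing obscures this.
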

The last two parts of this theorem are proved by the authors and R. Matzke  in \cite{BD1}: the critical case \eqref{G3} is obtained as a consequence of the geodesic distance Stolarsky principle, relating hemisphere discrepancy and the geodesic distance energy (see also \S \ref{s.stol} in the present paper), while the degenerate case \eqref{G4} easily follows from the critical case. In the present paper  we present the proof of the first two parts, \eqref{G1} and \eqref{G2}, in which optimal   energy leads to uniform distribution, see Theorem \ref{thm-2-1}.

In the one-dimensional case   parts \eqref{G1} and \eqref{G2} of Theorem \ref{GeodExt} have been previously established by Brauchart, Hardin, and Saff \cite{BHSd}, along with precise asymptotic energy of the discrete geodesic energy of $N$ equally spaced points. Immediately after our result, Tan \cite{Tan} gave an alternative proof of parts \eqref{G2}-\eqref{G4} of Theorem \ref{GeodExt}, see the remark at the very end of \S \ref{s.gdei}.

We should note that in the case of Riesz energy, i.e. for $ F(x\cdot y) = | x-y |^\delta$, part \eqref{G1} also holds, see e.g. \cite{KS}, while for $\delta >0$ the situation is somewhat different, as was established by Bjorck \cite{Bjorck}:   $\sigma$  is the unique maximizer when $\delta \in (0,2)$, while for $\delta >2$  maximizers collapse to symmetric two-point measures as in \eqref{G4}; at the critical value $\delta =2$ maximizers are precisely those measures whose center of mass is at the origin. Intuitively, for small values of $\delta$, in particular, when $\delta <0$,  small scale interactions contribute the most to the energy, therefore (since $\rho(x,y) \approx |x-y|$ when $x$ and $y$ are close) both energies exhibit similar behavior, while for larger values of $\delta$ mid-range and long-range interactions come into play and the difference between geodesic and Euclidean distances manifests itself in the energy integrals. The restriction that $\delta > -d$ is natural, since on a $d$-dimensional manifold, the corresponding energy integral with $\delta \le -d$ would be infinite for any $\mu \in \mathcal B$.

Our proofs rely  on {\emph{spherical harmonic}} expansions. We briefly review the basic notions in \S \ref{prelim}, but for a detailed and extensive exposition the reader is directed to, e.g., \cite{DX}. In \S\ref{prelim} we discuss  connections between the extremizers  of   energy integrals and  properties of the potential $F$ (signs of  the Gegenbauer coefficients, positive definiteness). Some of these connections are well known in the theory and go back to Schoenberg \cite{schoen}, while some formulations are new.  We would like to point out  that some of these properties are also discussed in our our parallel paper \cite{BD1} without resorting to (or with minimal use of) spherical harmonics. \\

  In \S \ref{s.gdei} we apply the general results   presented  in \S \ref{prelim} to the specific case of the geodesic distance energy integral \eqref{gdei} and prove parts \eqref{G1} and \eqref{G2} of Theorem \ref{GeodExt}: these results are contained in Theorem \ref{thm-2-1}. Up to some technical details, the proofs boil down to demonstrating that the Gegenbauer coefficients of the potential  are all positive (negative). Essential computations are carried out in Lemma \ref{lem-2-1}. \\

  In \S \ref{s.stol} we connect to different objects, which quantify equidistribution:   we show (part \eqref{St1} of Theorem \ref{thm-3-2}, see also \S 5 in \cite{BD1} for a more detailed discussion) that for all positive definite functions $F$ the difference between the discrete and continuous energies may be represented as the $L^2$ norm of a certain discrepancy function: this is a generalization of the {\it{Stolarsky principle}} in discrepancy theory \cite{stol}. Furthermore, (see part \eqref{St2} of Theorem \ref{thm-3-2}) this discrepancy may be estimated using the function $F$, in particular, lower bounds involve Gegenbauer coefficients of $F$. We apply these results to give an alternative proof of a classical bound on the spherical cap discrepancy due to Beck \cite{Beck}, see Theorem \ref{beck}. \\

Next, in  \S \ref{s.disc}, we turn to the problem of estimating the asymptotic difference between the geodesic distance energy of the uniform distribution $I_{d,\delta} (\sigma)$ and the corresponding optimal energy of discrete $N$-point distributions, as $N\rightarrow \infty$. Setting $\displaystyle{\mathcal{E}_{d,\da} (N) = \inf_{\# Z  = N} E_{F_\delta}  (Z)}$, where as before    $ F_\delta ( t ) = (\arccos t)^\delta$, and     using the results of \S\ref{s.stol}, in Theorem \ref{thm-1-4-energy-ch14} we establish that  the asymptotic estimate
\begin{equation}
 I_{d,\da} (\sa) - \frac{2}{N^2} \mathcal{E}_{d,\da}  (  N)\sim  N^{-1-\f \da d}\,
 \end{equation}
 holds  for $-d < \delta < 1$ (with a logarithmic correction for $\delta =0$). This closely mirrors  the case of the classical Riesz energy, but for  the Euclidean distance this estimate   is valid for $-d < \delta <2$ (this has been established in a series of papers: \cite{wagner1,wagner2}, \cite{KS}, and   \cite{Brauchart}). 

% In order to obtain this result  in \S \ref{s.stol}
In order to prove Theorem \ref{thm-1-4-energy-ch14}, one needs  sharp asymptotic estimates of the Gegenbauer coefficients of $F_\delta$ (while  to establish the optimality of $I_{d,\delta} (\sigma)$ in \S \ref{s.gdei}, it suffices  just to show  that these coefficients are positive). These bounds, which in the geodesic case are much more complicated than for the Euclidean distance,    are stated in Lemma \ref{lem-4-3} and their rather technical proof is presented in \S\ref{s.tech}. %The same method can be used to prove sharp asymptotics for the discrete Riesz energy, which was previously done in \cite{wagner1,wagner2,KS,Brauchart}. In addition,  we use the approach of \S \ref{s.stol} to give an alternative proof of Beck's  classical lower bound of the spherical cap discrepancy of an $N$ point set, see Theorem \ref{beck}.

\begin{ack}
The authors are  extremely grateful to CRM (Barcelona): their collaboration has started while both of them participated in the research program on  ``Constructive Approximation and Harmonic Analysis" in  2016. The stay of the first author at CRM has been sponsored by NSF grant DMS 1613790. This work is partially supported by NSERC  Canada under
grant  RGPIN 04702 (Dai) and  by the Simons foundation collaboration grant (Bilyk).   \end{ack}

\section{Preliminaries}\label{prelim}

Let $w_\ld(t)=(1-t^2)^{\ld-\f12}$ with $\ld>0$.  Given $1\leq p<\infty$, we  denote by $L_{w_\ld}^p[-1,1]$ the space of all real  integrable functions $F$ on $[-1,1]$ with $\|F \|_{p,\ld}:=\Bl(\int_{-1}^1 |F(t)|^p w_\ld(t)\, dt\Br)^{1/p}<\infty$.
Every function $F\in L_{w_\ld}^1[-1,1]$ has  a Gegenbauer (ultraspherical)  polynomial expansion:
 \begin{equation}\label{1-1-16}F(t)\sim \sum_{n=0}^\infty \wh{F}(n; \ld) \f {n+\ld} \ld C_n^\ld(t),\  \  t\in [-1,1],\end{equation}
where $C_n^\lambda$ are Gegenbauer polynomials (see \cite{DX} for an extensive discussion) and
\begin{align*}
\wh{F}(n; \ld) &=\f{\Ga(\ld+1)}{\Ga(\ld+\f12)\Ga(\f12)}
 \int_{-1}^1  F(t) R_n^{\ld}(t) (1-t^2)^{\ld-\f12}\, dt,  \   \ n=0,1,\cdots,
\end{align*}
where $R_n^\ld(t)=\f{C_n^\ld(t)}{C_n^\ld(1)}$.  From now on in this text we shall set the value of $\lambda$  to  $$\lambda = \frac{d-1}{2}.$$ In the special case $\lambda =0$ (which corresponds to the circle $\mathbb S^1$) one obtains Chebyshev polynomials of the first kind $T_n (t)$, which satisfy $$ \frac{1}2 \lim_{\lambda \rightarrow 0} \frac{n+\lambda}{\lambda} C_n^\lambda (t) = T_n (t) = \cos \big( n \arccos t \big).$$

  Denote by $\s$  the  surface Lebesgue measure on $\sph$ normalized by $\sa(\sph)=1$, and let $\mathcal{H}_n$ be  the space of all spherical harmonics of degree $n$ on $\sph$.  Let   $\{ Y_{n,1},\cdots, Y_{n, a_n^d}\}$ denote  a real  orthonormal basis  of the space $\mathcal{H}_n$. The addition formula for spherical harmonics  states that (see, for instance, \cite[1.2.8]{DX}) % $x$, $y \in \mathbb S^d$
\begin{equation}\label{1-1-1}
    \sum_{j=1}^{a_n^d} Y_{n,j}(x) Y_{n,j}(y) = \f {n+\ld}{\ld} C_n^\ld(x\cdot y)\   \ \textup{ for all }\,\, x,y\in \sph,
\end{equation}
where $$a_n^d=\f {n+\ld}{\ld} C_n^\ld(1)=\text{dim}\ \mathcal{H}_n\sim n^{d-1},\   \  \ld=\f{d-1}2.$$

 As before we denote   the collection of all Borel probability measures on $\sph$  by
$\mathcal{B}$, and    $\da_{x_0} $  is the Dirac Borel  probability measure supported at $x_0\in\sph$.
Given $\mu\in\mathcal{B}$,
 define the energy integral $I_ F (\mu )$ of a (bounded or nonnegative) measurable function $F:[-1,1]\to\RR$ as in \eqref{EI}
 by
$ \displaystyle{I_F (\mu) =\int_{\sph}\int_{\sph} F(x\cdot y)\, d\mu(x) \, d\mu(y)}$.  We have the following proposition on extremizers of $I_F (\mu) $ over $\mathcal B$:

 \begin{prop}\label{prop-1-2} Let $\ld=\f{d-1}2$ and let  $F$ be  a  continuous function  on $[-1,1]$. The following conditions are equivalent:
 \begin{enumerate}[(a)]
\item\label{a}    $\wh{F}(n; \ld)\ge 0$  for all $n\ge 1$;
\item\label{b} the  surface Lebesgue measure  $\mu=\s$ on $\sph$ is a minimizer   of  the energy  integral $ I_F (\mu) $;
%\item\label{c}  every   Dirac mass $\mu=\da_e$, $e\in\sph$ is  a maximizer   of  the energy  integral $ I_F (\mu )$.
 \end{enumerate}
 If the above conditions hold, then every   Dirac mass $\mu=\da_e$, $e\in\sph$ is  a maximizer   of  the energy  integral $ I_F (\mu )$.
 \end{prop}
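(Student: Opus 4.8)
The plan is to establish the chain $(\ref{a}) \Leftrightarrow (\ref{b})$ via the spherical harmonic expansion of the energy integral, and then deduce the statement about Dirac masses as a byproduct. The central computational device is the following: for any $\mu \in \mathcal{B}$, expand $F(x\cdot y)$ using \eqref{1-1-16} and \eqref{1-1-1} to write
\begin{equation*}
I_F(\mu) = \wh{F}(0;\ld) + \sum_{n=1}^\infty \wh{F}(n;\ld)\, \f{1}{a_n^d}\sum_{j=1}^{a_n^d} \left| \int_{\sph} Y_{n,j}(x)\, d\mu(x) \right|^2 .
\end{equation*}
Here I would first justify the interchange of summation and double integration: since $F$ is continuous on the compact interval $[-1,1]$, it lies in $L^2_{w_\ld}[-1,1]$ and its Gegenbauer expansion converges in that space; one upgrades to an admissible form of termwise integration against the product measure $\mu\times\mu$ (e.g. by a Cesàro/Abel summation argument, or by approximating $F$ by polynomials in the uniform norm and passing to the limit, using that $|I_F - I_G| \le \|F-G\|_\infty$). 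The quantities $\mu_{n,j} := \int_{\sph} Y_{n,j}\, d\mu$ are the ``Fourier coefficients'' of $\mu$, and the key point is that the $n=0$ term is always $\wh{F}(0;\ld)$ (since $Y_{0,1}$ is constant and $\mu$ is a probability measure), while all the $n\ge 1$ terms carry the nonnegative weights $\sum_j |\mu_{n,j}|^2 / a_n^d$.

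Once this identity is in hand, both implications are short. For $(\ref{a})\Rightarrow(\ref{b})$: if every $\wh{F}(n;\ld)\ge 0$ for $n\ge 1$, then every term with $n\ge 1$ in the sum is nonnegative, so $I_F(\mu) \ge \wh{F}(0;\ld) = I_F(\sigma)$ (the last equality because all Fourier coefficients of $\sigma$ with $n\ge 1$ vanish). Hence $\sigma$ is a minimizer. For the converse $(\ref{b})\Rightarrow(\ref{a})$: suppose some $\wh{F}(n_0;\ld) < 0$ for a fixed $n_0\ge 1$. I would then construct a measure that beats $\sigma$: take $d\mu_\ve = (1 + \ve Y_{n_0,1})\, d\sigma$ for small $\ve > 0$, which is a probability measure since $\int Y_{n_0,1}\, d\sigma = 0$ and $|Y_{n_0,1}|$ is bounded; then $I_F(\mu_\ve) = \wh{F}(0;\ld) + \ve^2 \wh{F}(n_0;\ld)/a_{n_0}^d < I_F(\sigma)$, contradicting minimality. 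This shows $(\ref{b})\Rightarrow(\ref{a})$.

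Finally, for the assertion about Dirac masses: assuming $(\ref{a})$, note $I_F(\da_e) = F(1) = F(e\cdot e)$, and we must show $F(1) \ge I_F(\mu)$ for every $\mu\in\mathcal{B}$. From the expansion identity, $I_F(\mu) = \sum_{n\ge 0}\wh{F}(n;\ld)\,\f{1}{a_n^d}\sum_j |\mu_{n,j}|^2$, and since all coefficients $\wh{F}(n;\ld)$ with $n\ge 1$ are nonnegative while $\sum_j |\mu_{n,j}|^2/a_n^d \le \sum_j |\da_e{}_{,n,j}|^2/a_n^d$ is false in general — so instead I would argue directly: applying the same expansion to $\da_e$ gives $F(1) = \sum_{n\ge 0}\wh{F}(n;\ld)\,\f{1}{a_n^d}\sum_j Y_{n,j}(e)^2 = \sum_{n\ge 0}\wh{F}(n;\ld)$ by \eqref{1-1-1} with $x=y=e$. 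Comparing term by term, it suffices that $\f{1}{a_n^d}\sum_j |\mu_{n,j}|^2 \le 1$ for each $n\ge 1$, which follows from Cauchy–Schwarz: $|\mu_{n,j}| = |\int Y_{n,j}\,d\mu| \le (\int Y_{n,j}^2\, d\mu)^{1/2}$, hence $\sum_j |\mu_{n,j}|^2 \le \int \sum_j Y_{n,j}^2\, d\mu = \f{a_n^d}{\ld}\int C_n^\ld(1)^{-1}\cdots$; more cleanly, $\sum_j\mu_{n,j}^2 \le \int\sum_j Y_{n,j}(x)^2\,d\mu(x) = a_n^d$ by \eqref{1-1-1} evaluated on the diagonal. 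Thus each $n\ge 1$ term for $\mu$ is dominated by the corresponding term for $\da_e$ (both weighted by $\wh{F}(n;\ld)\ge 0$), and the $n=0$ terms agree, giving $I_F(\mu)\le F(1) = I_F(\da_e)$.

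The main obstacle I anticipate is the analytic justification of the termwise integration in the expansion identity — the Gegenbauer series for a merely continuous $F$ need not converge pointwise or uniformly, so one cannot naively swap $\sum$ and $\int\int$. The cleanest fix is a density argument: approximate $F$ uniformly by a sequence of polynomials $P_k$ (Weierstrass), for which the identity is a finite sum and trivially valid; then $\wh{P_k}(n;\ld)\to\wh{F}(n;\ld)$ for each fixed $n$, $I_{P_k}(\mu)\to I_F(\mu)$ uniformly in $\mu$, and a Fatou-type/monotone argument controls the tail. Care is needed because the sign conditions on $\wh{P_k}(n;\ld)$ are not inherited from $F$, so for the direction $(\ref{a})\Rightarrow(\ref{b})$ one may prefer an Abel summation / Poisson-kernel regularization $F_r(t) = \sum_n \wh{F}(n;\ld)\f{n+\ld}{\ld}r^n C_n^\ld(t)$ with $r\uparrow 1$, which preserves nonnegativity of coefficients and converges to $F$ uniformly on $[-1,1]$ by continuity. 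Everything else is routine.
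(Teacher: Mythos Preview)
Your approach is essentially the same as the paper's: expand $I_F(\mu)$ via the addition formula into $\wh{F}(0;\ld)+\sum_{n\ge1}\wh{F}(n;\ld)\,b_{n,\mu}$ with $b_{n,\mu}=\sum_j\mu_{n,j}^2\ge0$, use Cauchy--Schwarz to get $b_{n,\mu}\le a_n^d$, and for the converse perturb $\sigma$ by $1+\ve Y_{n_0,1}$. Two comments. First, you have a normalization slip: there is no factor $1/a_n^d$ in the expansion --- the correct identity is $I_F(\mu)=\sum_n\wh{F}(n;\ld)\sum_j\mu_{n,j}^2$, and correspondingly $F(1)=\sum_n\wh{F}(n;\ld)\,a_n^d$, not $\sum_n\wh{F}(n;\ld)$; once you drop the spurious $1/a_n^d$ everything lines up. Second, on the interchange of sum and integral: your Abel/Poisson regularization works, but the paper handles this more cleanly via a dedicated lemma (its Lemma~\ref{l.decay}) showing that continuity of $F$ together with $\wh{F}(n;\ld)\ge0$ forces $\sum_n n^{2\ld}\wh{F}(n;\ld)<\infty$, hence the Gegenbauer series converges absolutely and uniformly; the proof is a short Ces\`aro-means argument evaluating at $t=1$. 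This self-improving decay estimate makes the termwise integration immediate and avoids the density/regularization detour you anticipated as the main obstacle.
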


  Moreover, concerning uniqueness of extremizers, the following holds:
  \begin{prop}\label{prop-1-2u} Let $\ld=\f{d-1}2$ and let  $F\in C[-1,1]$. The following conditions are equivalent:
   \begin{enumerate}[(a)]
 \item\label{aa} $\wh{F}(n;\ld)> 0$  for all $n\ge 1$;
 \item\label{bb} the normalized surface measure $\sigma$ is the unique  minimizer of $I_F (\mu )$;
% \item\label{cc} every maximizer of $I_F (\mu )$  is a Dirac mass.
 \end{enumerate}
 In this case, every maximizer of $I_F (\mu )$  is a Dirac mass.
      \end{prop}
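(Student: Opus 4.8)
The plan is to push everything through the Fourier--Laplace (spherical harmonic) moments $\wh\mu(n,j)=\int_{\sph}Y_{n,j}\,d\mu$ of $\mu\in\B$ and the quadratic-form expansion of the energy. Expanding $F$ in Gegenbauer polynomials, using the addition formula \eqref{1-1-1}, and interchanging summation with the double integral --- which is legitimate once $\wh F(n;\lambda)\ge 0$ for all $n\ge 1$, the terms then being nonnegative --- one gets, for every $\mu\in\B$,
\begin{equation*}
I_F(\mu)-I_F(\sigma)=\sum_{n=1}^\infty \wh F(n;\lambda)\sum_{j=1}^{a_n^d}\bigl|\wh\mu(n,j)\bigr|^2\ \ge\ 0,
\end{equation*}
together with $I_F(\sigma)=\wh F(0;\lambda)$ (all non-constant moments of $\sigma$ vanish); this is the computation behind Proposition~\ref{prop-1-2}, and I would take this identity as the common starting point for both implications of the present statement.

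For (a)$\Rightarrow$(b): Proposition~\ref{prop-1-2} already gives that $\sigma$ is a minimizer. If $\mu$ is another minimizer, the displayed identity forces every summand to vanish, and since $\wh F(n;\lambda)>0$ for all $n\ge 1$ this means $\wh\mu(n,j)=0$ for every $n\ge 1$ and every $j$; hence $\mu$ and $\sigma$ integrate every element of $\bigoplus_n\CH_n$ equally, and since finite sums of spherical harmonics are dense in $C(\sph)$ (Stone--Weierstrass), $\int_{\sph}g\,d\mu=\int_{\sph}g\,d\sigma$ for all $g\in C(\sph)$, so $\mu=\sigma$ by the Riesz representation theorem. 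For (b)$\Rightarrow$(a): uniqueness makes $\sigma$ in particular a minimizer, so Proposition~\ref{prop-1-2} already yields $\wh F(n;\lambda)\ge 0$ for all $n\ge 1$, and it remains to rule out $\wh F(n_0;\lambda)=0$ for some $n_0\ge 1$. Here I would perturb $\sigma$: choosing $\ve>0$ small enough that $1+\ve Y_{n_0,1}\ge 0$ on $\sph$ (possible since $Y_{n_0,1}$ is continuous, hence bounded), the measure $d\mu=(1+\ve Y_{n_0,1})\,d\sigma$ is a Borel probability measure (total mass $1$ because $n_0\ge 1$), distinct from $\sigma$, whose only nonzero higher moment is $\wh\mu(n_0,1)=\ve$; the displayed identity then gives $I_F(\mu)=I_F(\sigma)$, producing a second minimizer and contradicting (b).

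For the last assertion --- every maximizer is a Dirac mass --- I would first record the Bessel-type bound
\begin{equation*}
\sum_{j=1}^{a_n^d}\bigl|\wh\mu(n,j)\bigr|^2\ \le\ \int_{\sph}\sum_{j=1}^{a_n^d}\bigl|Y_{n,j}(x)\bigr|^2\,d\mu(x)=a_n^d,
\end{equation*}
obtained from Cauchy--Schwarz against the probability measure $\mu$ together with the addition formula \eqref{1-1-1} at $x=y$, with equality whenever $\mu=\delta_e$. Substituting into the expansion gives $I_F(\mu)\le\wh F(0;\lambda)+\sum_{n\ge 1}\wh F(n;\lambda)\,a_n^d=I_F(\delta_e)=F(1)$, so a maximizer is precisely a measure that saturates the Bessel bound for every $n$ with $\wh F(n;\lambda)>0$, in particular for $n=1$. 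Since $\CH_1$ is spanned by the normalized coordinate functions, saturation at $n=1$ reads $\bigl|\int_{\sph}x\,d\mu(x)\bigr|=1=\int_{\sph}|x|\,d\mu(x)$, and the equality case of the triangle inequality for vector-valued integrals forces all the mass of $\mu$ onto a single point, i.e.\ $\mu$ is a Dirac mass.

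The one point I expect to need genuine care is the validity of the termwise expansion of $I_F(\mu)$ for an arbitrary (possibly singular) $\mu$ when $F$ is merely continuous --- the interchange of $\sum_n$ with the double integral --- but this is exactly what is settled inside the proof of Proposition~\ref{prop-1-2}, via nonnegativity of the Gegenbauer coefficients and a summability-kernel (monotone-convergence) argument, so I would simply quote it. Everything else is routine: completeness of spherical harmonics in $C(\sph)$ for the uniqueness half, and the center-of-mass/equality-in-the-triangle-inequality computation for the collapse of maximizers to point masses.
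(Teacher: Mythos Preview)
Your argument for the equivalence (a)$\Leftrightarrow$(b) is essentially identical to the paper's: both directions go through the expansion $I_F(\mu)-I_F(\sigma)=\sum_{n\ge 1}\wh F(n;\lambda)\,b_{n,\mu}$ with $b_{n,\mu}=\sum_j\wh\mu(n,j)^2$, use density of spherical polynomials for uniqueness, and the perturbation $d\mu=(1+\ve Y_{n_0,1})\,d\sigma$ to defeat uniqueness when some coefficient vanishes.

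For the final assertion about maximizers, however, you take a genuinely different and shorter route. The paper saturates the Bessel bound at \emph{every} level $n\ge 1$, deduces from the equality case of Cauchy--Schwarz that each $Y_{n,j}$ is $\mu_0$-a.e.\ constant, passes to all spherical polynomials and then to all of $C(\sph)$ by density, and finally argues from there that $\operatorname{supp}\mu_0$ is a singleton. You instead use saturation only at $n=1$: since $\CH_1$ is spanned by the normalized coordinate functions $\sqrt{d+1}\,x_k$, the identity $b_{1,\mu_0}=a_1^d=d+1$ becomes $\bigl|\int_{\sph}x\,d\mu_0\bigr|=1$, i.e.\ the barycenter of $\mu_0$ lies on the sphere, and the equality case of $\bigl|\int x\,d\mu_0\bigr|\le\int|x|\,d\mu_0=1$ forces $x=e$ $\mu_0$-a.e.\ for $e=\int x\,d\mu_0$. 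This is cleaner and uses strictly less information (only that $\wh F(1;\lambda)>0$), whereas the paper's argument, though heavier, makes the mechanism transparent at every frequency and would adapt to situations where the lowest nontrivial level is not $n=1$. Both are correct.
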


Obviously changing the inequality signs in the above proposition reverts the roles of maximizers and minimizers. In addition, since adding constants to $F$ does not change the extremizers of $I_F (\mu)$, it is natural that $\wh{F} (0; \lambda) $ does not play a role in these statements.  In order to prove these statements we shall need the following technical lemma:

\begin{lem}\label{l.decay}
Let $F\in C [-1,1]$ and assume that    $\wh{F}(n; \ld)\ge 0$  for all $n\ge 1$. Then  \begin{equation}\label{1-1-a}
   \sum_{n=1}^\infty  n^{2\ld}\, \wh{F}(n;\ld) <\infty,
 \end{equation}
  which, in particular, will imply that the series on the right hand side of \eqref{1-1-16} converges uniformly and absolutely to the function $F$  on $[-1,1]$.
\end{lem}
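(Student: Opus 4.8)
The plan is to evaluate a regularized version of the expansion \eqref{1-1-16} at the endpoint $t=1$. There $R_n^\ld(1)=1$, so the $n$-th term of that series equals $\wh F(n;\ld)\,a_n^d$ with $a_n^d=\f{n+\ld}\ld C_n^\ld(1)=\dim\mathcal{H}_n\sim n^{2\ld}$; hence \eqref{1-1-a} is equivalent to $\sum_{n\ge1}\wh F(n;\ld)\,a_n^d<\infty$, a series of nonnegative terms whose sum should turn out to be essentially $F(1)$. To regularize I would introduce the Abel (Poisson) means
\[
F_r(t):=\sum_{n=0}^\infty r^n\,\wh F(n;\ld)\,\f{n+\ld}\ld C_n^\ld(t),\qquad 0\le r<1 .
\]
Since $F$ is bounded, $|\wh F(n;\ld)|\le\|F\|_{C[-1,1]}$, while $\big|\f{n+\ld}\ld C_n^\ld(t)\big|\le a_n^d$ for $t\in[-1,1]$ because $C_n^\ld$ attains its maximal modulus on $[-1,1]$ at $t=\pm1$ (for $\ld=0$, read $2T_n$ in place of $\f{n+\ld}\ld C_n^\ld$); consequently the series defining $F_r$ converges absolutely and $F_r$ is continuous for each $r<1$.

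The main step is a closed form for the summability kernel. From the generating identity
\[
\sum_{n=0}^\infty \f{n+\ld}\ld C_n^\ld(t)\,r^n=\frac{1-r^2}{(1-2rt+r^2)^{\ld+1}}=:P_r(t)
\]
(the classical Poisson kernel when $\ld=0$) together with the addition formula \eqref{1-1-1} and Parseval's identity on $\sph$, the series defining $F_r$ is seen to be the spherical convolution of $F$ with $P_r$, i.e.
\[
F_r(x\cdot e)=\int_{\sph}F(x\cdot y)\,P_r(y\cdot e)\,d\sigma(y),\qquad x,e\in\sph,
\]
and in particular $F_r(1)=\int_{\sph}F(e\cdot y)\,P_r(y\cdot e)\,d\sigma(y)$. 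Now $P_r\ge0$ on $[-1,1]$, $\int_{\sph}P_r(y\cdot e)\,d\sigma(y)=1$ (the $n=0$ coefficient), and $1-2ru+r^2\ge 2r(1-u)$ shows $\sup_{u\le\cos\eta}P_r(u)\to0$ as $r\to1^-$ for each fixed $\eta>0$; thus $P_r(y\cdot e)\,d\sigma(y)$ is an approximate identity concentrating at $e$, and since $y\mapsto F(e\cdot y)$ is continuous with value $F(1)$ at $y=e$, one obtains $F_r(1)\to F(1)$ as $r\to1^-$. (Alternatively, this is just Abel summability of ultraspherical expansions of continuous functions, cf.\ \cite{DX}.)

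Putting the pieces together, write $F_r(1)=\wh F(0;\ld)+\sum_{n=1}^\infty r^n\,\wh F(n;\ld)\,a_n^d$; by hypothesis each term with $n\ge1$ is nonnegative and nondecreasing in $r\in[0,1)$, so monotone convergence gives
\[
\sum_{n=1}^\infty \wh F(n;\ld)\,a_n^d=\lim_{r\to1^-}\big(F_r(1)-\wh F(0;\ld)\big)=F(1)-\wh F(0;\ld)<\infty,
\]
which is \eqref{1-1-a} since $a_n^d\sim n^{2\ld}$. Finally, \eqref{1-1-a} together with $\big|\wh F(n;\ld)\f{n+\ld}\ld C_n^\ld(t)\big|\le\wh F(n;\ld)\,a_n^d$ for all $t\in[-1,1]$ and $n\ge1$ yields, by the Weierstrass $M$-test, uniform and absolute convergence of the series in \eqref{1-1-16} on $[-1,1]$; its sum is continuous with the same Gegenbauer coefficients as $F$, hence equals $F$. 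I expect the only genuinely delicate points to be the approximate-identity limit $F_r(1)\to F(1)$ and the (harmless) interchanges of summation with integration, both of which are controlled by the explicit nonnegative kernel $P_r$ and the uniform bound $\sum_n r^n a_n^d<\infty$.
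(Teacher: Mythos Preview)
Your proof is correct and follows essentially the same strategy as the paper's: evaluate a summability mean of the expansion \eqref{1-1-16} at $t=1$, where by hypothesis the terms are nonnegative with weights monotonically increasing to $1$, and apply monotone convergence to conclude $\sum_{n\ge1}\wh F(n;\ld)\,a_n^d=F(1)-\wh F(0;\ld)<\infty$. The only difference is the choice of summability method---you use Abel/Poisson means and verify directly that the explicit kernel $P_r(t)=(1-r^2)(1-2rt+r^2)^{-\ld-1}$ is an approximate identity, whereas the paper uses Ces\`aro $(C,\delta)$-means with $\delta>\ld$ and cites \cite[Theorem~2.4.3]{DX} for their uniform convergence on $C[-1,1]$; both routes feed into the identical monotone-convergence step.
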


\begin{proof}
%It remains to show the claim \eqref{1-1-a}.
Let $\sa_n^{\da} F$ denote the Ces\`aro $(C, \da)$-means of the Gegenbauer polynomial expansion of $F$ (see \cite[Section 2.4]{DX} for details), i.e.
$$\sa_n^\da F(t)=\sum_{k=0}^n \f {A_{n-k}^\da}{A_n^\da} \f{k+\ld}{\ld}\wh{F}(k;\ld)C_k^\ld (t),\    \    \   \  A_j^\da=\f{\Ga(j+\da+1)}{\Ga(j+1)\Ga(\da+1)}.$$
It is known (e.g. Theorem 2.4.3 in \cite{DX})  that for $\da>\ld$,
$$\lim_{n\to\infty}\|\sa_n^\da F-F\|_{L^\infty[-1,1]}=0,\   \  \forall F \in C[-1,1].$$
On the other hand, since  for each fixed $j$, the sequence $\{ \f {A_{n-j}^\da}{A_n^\da}\}_{n=j}^\infty$ increases to $1$ as $n\to\infty$, it follows  by Levi's monotone convergence theorem  that
\begin{align*}
  f(1)=\lim_{n\to\infty} \sa_n^\da (f)(1)=\lim_{n\to\infty} \sum_{k=0}^n \f{A_{n-k}^\da}{A_n^\da } \wh{f}(k;\ld) \f {k+\ld} \ld C_k^\ld (1)=\sum_{k=0}^\infty \wh{f}(k;\ld) \f {k+\ld} \ld C_k^\ld (1).
\end{align*}
This yields \eqref{1-1-a}  since  $(k+\ld)C_k^\ld(1)\sim k^{2\ld}$ as $k\to\infty$.
\end{proof}

{\emph{Remark:}} The self-improving property \eqref{1-1-a} (positivity of coefficients implies their decay) has various manifestations  in harmonic analysis: e.g., if  a function $f\in L^1 (\mathbb T)$ has  Fourier series $\sum i  c_n e^{2\pi i nx}$ with $c_n = - c_{-n} \ge 0$ (i.e. sine series with non-negative coefficients), then necessarily $\sum_{n>0} \frac{c_n}{n} <\infty$ (see e.g. \cite[page 24]{Katz}), which is a direct analog of \eqref{1-1-a}.

We now prove  Proposition \ref{prop-1-2}:

    \begin{proof}[Proof of Proposition \ref{prop-1-2}] %We start with the first set of equivalences. Assume that \eqref{a}  holds, i.e. that  $\wh{F}(n; \ld)\ge 0$  for all $n\ge 1$. We claim that in this case
 %\begin{equation}\label{1-1-a}
   %\sum_{n=1}^\infty |\wh{F}(n;\ld)| n^{2\ld}<\infty,
 %\end{equation}
%  which, in particular, will imply that the series on the right hand side of \eqref{1-1-16} converges uniformly and absolutely to the function $F$  on $[-1,1]$.

 %  For the moment, we take   \eqref{1-1-a} for granted and proceed with the proof  of the facts  
  
 We first prove that $\s$ is a minimizer of $I_F (\mu )$ and $\da_e$ is a maximizer of $I_F (\mu)$ for any $e\in\sph$. Indeed,
by  \eqref{1-1-a},  \eqref{1-1-1} and the  dominated convergence theorem, it follows  that
\begin{align}
    I_F (\mu)   &=\sum_{n=0}^\infty \wh{F}(n;\ld) \int_{\SS^d}\int_{\SS^d} \f {n+\ld} \ld C_n^\ld (x\cdot y) \, d\mu(x)\, d\mu(y)\notag\\
     &=\sum_{n=0}^\infty \wh{F}(n;\ld) \sum_{j=1}^{a_n^d} \int_{\SS^d}\int_{\SS^d} Y_{n,j}(x) Y_{n,j}(y)\, d\mu(x) d\mu(y)\\ & =\wh{F}(0;\ld)+\sum_{n=1}^\infty \wh{F}(n;\ld) b_{n,\mu},\label{1-3-1}
\end{align}
where, by the addition formula \eqref{1-1-1},
\begin{equation}\label{1-5}
    b_{n,\mu} =\f {n+\ld} \ld\int_{\SS^d}\int_{\SS^d}  C_n^\ld (x\cdot y) \, d\mu(x)\, d\mu(y)=\sum_{j=1}^{a_n^d} \bigg( \int_{\sph} Y_{n,j}(x)\, d\mu(x)\bigg)^2 \ge 0.
\end{equation}
 Using the Cauchy--Schwarz  inequality and \eqref{1-1-1} with $x=y$,  we obtain
 \begin{align}\label{1-4-1}
    0\leq b_{n, \mu} \leq \int_{\sph}\sum_{j=1}^{a_n^d}  |Y_{n,j}(x)|^2\, d\mu(x)=
    a_n^d.
 \end{align}
 Thus, if $\wh{F}(n;\ld)\ge  0$ for all $n\ge 1$, then by \eqref{1-3-1} and \eqref{1-4-1}, one has $$
    I_F (\mu) \ge \wh{F}(0;\ld)=I_F (\sa ) \,\, \textup{ and }$$
    $$ I_F (\mu ) \leq \wh{F}(0;\ld) +\sum_{n=1}^\infty \wh{F}(n;\ld) a_n^d =F(1) =I_F ({\da_{e}} ),$$ i.e.
    $\s$ and $\da_e$ are a minimizer and a maximizer of the integral $I_F (\mu )$, respectively.\\

Conversely, if  $\wh{F}(n; \ld) < 0$ for some $n\ge 1$, then set $d\mu (x) = \big(1 + \varepsilon Y_{n,1} (x)  \big) d\sigma (x)$, where $\varepsilon >0$  is chosen small enough so that $   1 + \varepsilon Y_{n,1} (x)  \ge 0$ on $\mathbb S^d$.  The Funk--Hecke formula (see, e.g., Theorem 1.2.9 in \cite{DX}) states that for any spherical harmonic $Y \in \mathcal H_n$
 \begin{equation}\label{e.FH}
  \int_{\mathbb S^d} F (x\cdot  y) Y (x)  d\sigma (x)  = \wh{F} (n;\lambda) Y (y).
  \end{equation}
Thus, using the fact that $\int_{\sph} Y_{n,1} (x) d\sigma (x) = 0$,  we find that $\mu \in \mathcal B$ and
\begin{align}
\label{e.perturb}  I_F (\mu) & =  \int_{\sph}\int_{\sph} F(x\cdot y)\, \big(1 + \varepsilon Y_{n,1} (x)  \big) \big(1 + \varepsilon Y_{n,1} (y)  \big)\, d\sigma (x) \, d\sigma (y)\\
\nonumber & = I_F (\sigma) +  \varepsilon^2  \, \wh{F} (n;\lambda)   \int_{\sph} Y^2_{n,1} (y) d\sigma (y) < I_F (\sigma),
\end{align}
i.e. $\sigma $ is not a minimizer of $I_F$.
\end{proof}

{\emph{Remark:}} We observe that the fact that $\delta_e$ maximizes $I_F$ is not equivalent to conditions \eqref{a}-\eqref{b} of Proposition \ref{prop-1-2}. Indeed, a sufficient condition for this is that $\max_{t\in[-1,1]} F (t) =F (1)$, since then for each $\mu \in \mathcal B$, we have $I_F (\mu) \le \| F \|_\infty = F(1) = I_F (\delta_e)$. For example, for $F(t) = - \big( \arccos t \big)^2$, the maximizer is obviously $\delta_e$, while the minimizer is not $\sigma$: according to part \eqref{G4} of Theorem \ref{GeodExt}, minimizers are measures of the form $\frac12 ( \delta_e + \delta_{-e})$.

%By orthogonality, $b_{k,\mu} = 0$ when $k\neq n$, and    $$   b_{n,\mu}  = \bigg( \int_{\sph} Y_{n,1}(x)\, d\mu(x)\bigg)^2  = \bigg( \int_{\sph} Y_{n,1}(x)\,  \big(1 + \varepsilon Y_{n,1} (x)  \big) d\sigma (x) \bigg)^2  = \varepsilon^2 $$

We now prove Proposition \ref{prop-1-2u} about the uniqueness of  minimizers.
\begin{proof}[Proof of Proposition \ref{prop-1-2u}] By Proposition \ref{prop-1-2} it is enough to assume that $\wh{F}(n;\ld) \ge 0$  for all $n\ge 1$.
 Suppose that \eqref{aa} holds, i.e. $\wh{F}(n;\ld)> 0$  for all $n\ge 1$.  In this case,   by \eqref{1-3-1}, the equality
    $I_F (\mu )  = \wh{F}(0;\ld)=I_F (\sa  )$  holds  if and only if $b_{n,\mu}=0$ for all $n\ge 1$, i.e. if and only if  $\int_{\sph} g(x)\, d\mu(x)=0$ for all $g\in\mathcal{H}_n$ and $n\ge 1$. This last condition implies that for each spherical polynomial $P$ on $\sph$,
    $$\int_{\sph} P(x)\, d\mu(x)=\int_{\sph} P(x)\, d\s(x).$$
    By the density of spherical polynomials in the space $C(\sph)$, we then conclude that $d\mu=d\s$.

       Next, we show that if $\mu_0\in\mathcal{B}$ is a  maximizer of $I_F (\mu) $, then   $\mu_0=\da_e$ for some $e\in\sph$.
        To see this, we first note that    according to  \eqref{1-3-1}, \eqref{1-5} and \eqref{1-4-1},
         in order that $$I_ F({\mu_0}) =\max_{\mu\in\mathcal{B}} I_F (\mu)  =  I_F (\delta_e) = \wh{F}(0;\ld) +\sum_{n=1}^\infty \wh{F}(n;\ld) a_n^d,  $$ it is necessary that
         $$ \Bl( \int_{\sph} Y_{n,j}(x)\, d\mu_0(x)\Br)^2 =\int_{\sph} |Y_{n,j}(x)|^2 \, d\mu_0(x),\  \  \forall n\ge 1, \  \ \forall 1\leq j\leq a_n^d,$$
        or equivalently,
       \begin{equation}\label{1-6-0}
Y_{n, j}(x)\equiv constant\    \  \text{$\mu_0$-a.e. on $\sph$},\   \   \forall n\ge 1,\   \ \forall 1\leq j\leq a_n^d,
        \end{equation}
          which in turn implies that each spherical polynomial is constant $\mu_0$-a.e. on $\sph$.
          Since the space of spherical polynomials is dense in $C(\sph)$, we further conclude  that every continuous function on $\sph$ is constant $\mu_0$-a.e. on $\sph$.

 Assume that the support of $\mu_0$ contains at least two distinct points $e_0 \neq e_1$. (Recall that $\operatorname{supp} \mu_0$ is the complement of the union of all open sets $U$ with $\mu_0 (U) =0$.) Choose open neighborhoods $U_i$ of $e_i$ such that $\mu_0 (U_i) >0$ and $U_0 \cap U_1 = \emptyset$, and construct a function $f\in C (\sph)$ with $f \vert_{U_i}   = i $. Then $f$ is not constant $\mu_0$-a.e. on $\sph$. Thus $\operatorname{supp} \mu_0$ contains only one point, and hence $\mu_0$ is a Dirac mass.

  %       \textcolor{red}{I think the argument in the previous paragraph is a tiny bit simpler than the one we've had before (see the blue text below).}

   %       \textcolor{blue}{Since $\sph$ is compact and $\mu(\sph)=1$, one can find a sequence $\{B_n\}_{n=1}^\infty$ of closed  spherical caps such that $B_{n+1}\subset B_n$, $\mu_0(B_n)>0$ for each $n\in\NN$  and $\lim_{n\to \infty}\text{rad}(B_n)=0$. Let $e\in \bigcap_{n=1}^\infty B_n$. We  claim that $\mu_0 (\sph \setminus B_n)=0$ for each $n\in\NN$. Assume otherwise. Then   $\mu_0(\sph \setminus B_n)>0$ for some $n\in\NN$. By the inner regularity of the Borel measure, there exists a compact set $K_n\subset \sph\setminus B_n$ such that $\mu_0(K_n)>0$. Let $f$ be a nonnegative continuous function on $\sph$ such that $f(x)=1$ for $x\in B_n$ and $f(x)=0$ for $x\in K_n$. Since both $K_n$ and $B_n$ have positive $\mu_0$-measure, this contradicts the fact that every continuous function on $\sph$ is constant $\mu_0$ a.e. on $\sph$. Thus, $\mu_0(\sph\setminus B_n)=0$ for all $n\in\NN$, and
%          \begin{align*}
 %           \mu_0(\sph\setminus\{e\})=\mu_0(\bigcup _{n=1}^\infty B_n^c)=0.
 %         \end{align*}
%This shows that $\mu_0=\da_e$.}

If we assume that \eqref{aa} fails, i.e. $\widehat{F} (n;\lambda)  = 0 $ for some $n \ge 1$ (if  $\widehat{F} (n;\lambda)  < 0 $, then $\sigma$ is not a minimizer  by Proposition \ref{prop-1-2}), then the argument of \eqref{e.perturb} shows that for $d\mu (x) = \big(1 + \varepsilon Y_{n,1} (x)  \big) d\sigma (x)$ we  have $I_F (\mu)= I_F (\sigma)$, i.e. $\sigma$ is not a unique minimizer.
    \end{proof}

We would like to note that  functions with $\wh{F}(n; \ld)\ge 0$  for all $n\ge 1$ are, up to constant terms, {\emph{positive define functions on the sphere}} (see the discussion  in the beginning of \S \ref{s.stol}), which were introduced by Schoenberg \cite{schoen} also in the  context of energy optimization.

In the end of this section  we state some additional results about the extremizers of $I_F$ in terms of the signs of the Gegenbauer coefficients $\wh{F} (n;\lambda)$, which may be proved by identical arguments. These statements could be used to prove  parts \eqref{G3} and \eqref{G4} of Theorem \ref{GeodExt}, which were proved in \cite{BD1} by other means.
\begin{lem}\label{l.aux}
Let $F \in C [-1,1]$ and $\lambda = \frac{d-1}{2}$.
\begin{enumerate}[(i)]
 \item If $(-1)^n \wh{F}(n;\lambda)\ge 0$ for all $n\ge 1$, then
    $$ \max_{\mu\in \mathcal B} I_F (\mu)  = \wh{F}(0;\lambda) +\sum_{n=1}^\infty \wh{F}(2n;\lambda) a_{2n}^d =\f {F(1)+F(-1)}2= I_F \bigg({\frac{\da_{e} +\da_{-e}}{2}}\bigg).$$
Moreover, if $(-1)^n \wh{F}(n;\lambda) >  0$ for all $n\ge 1$, then any maximizer of $I_F$ is of the form $\mu = \frac12 (\da_{e} +\da_{-e})$ for some $e \in \mathbb S^d$.

      \item If $\wh{F}(2n;\lambda)=0$ and $\wh{F}(2n-1; \lambda) \leq  0$ for all $n\in \mathbb N$, then
$$ \max_{\mu \in \mathcal B } I_F (\mu ) = \wh{F}(0; \lambda)$$ and  the maximum  is achieved for any symmetric measure $\mu$ (i.e. $\mu(E)=\mu(-E)$ for all measurable $E\subset \sph$).

\noindent If $\wh{F}(2n;\lambda)=0$ and $\wh{F}(2n-1; \lambda) <  0$ for all $n\in \mathbb N$, then all maximizers  of $I_F$ are symmetric.
\end{enumerate}
\end{lem}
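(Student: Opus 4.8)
The plan is to mimic, almost verbatim, the proofs of Propositions~\ref{prop-1-2} and~\ref{prop-1-2u}, merely replacing the roles of the coefficients $\wh F(n;\lambda)$ by their signed counterparts and tracking which $b_{n,\mu}$ get multiplied by zero. Throughout we rely on the expansion \eqref{1-3-1}, namely $I_F(\mu) = \wh F(0;\lambda) + \sum_{n\ge 1} \wh F(n;\lambda)\, b_{n,\mu}$, where $b_{n,\mu} = \sum_{j=1}^{a_n^d}\big(\int_{\sph} Y_{n,j}\,d\mu\big)^2$ satisfies $0 \le b_{n,\mu} \le a_n^d$ by \eqref{1-5} and \eqref{1-4-1}. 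One first has to check that Lemma~\ref{l.decay} still applies so that this expansion is valid: under the hypothesis of~(i), $(-1)^n\wh F(n;\lambda)\ge 0$, consider $G(t) := F(-t)$, whose Gegenbauer coefficients are $\wh G(n;\lambda) = (-1)^n\wh F(n;\lambda) \ge 0$ (since $C_n^\lambda$ has parity $(-1)^n$); Lemma~\ref{l.decay} applied to $G$ gives $\sum_n n^{2\lambda}|\wh F(n;\lambda)| < \infty$, hence absolute and uniform convergence and the validity of \eqref{1-3-1}. Under the hypothesis of~(ii), only odd coefficients are nonzero and $\wh F(2n-1;\lambda)\le 0$, so the same device applied to $-G(t) = -F(-t)$ (whose odd coefficients are $-(-1)^{2n-1}\wh F(2n-1;\lambda) = \wh F(2n-1;\lambda)\cdot(-1) \ge 0$... more simply, apply Lemma~\ref{l.decay} to $-F(-\cdot)$) again yields the summability needed for \eqref{1-3-1}.

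For part~(i): write $I_F(\mu) = \wh F(0;\lambda) + \sum_{n\ge 1}\wh F(n;\lambda)b_{n,\mu}$. For odd $n$, $\wh F(n;\lambda)\le 0$ and $b_{n,\mu}\ge 0$, so those terms are $\le 0$; for even $n$, $\wh F(n;\lambda)\ge 0$ and $b_{n,\mu}\le a_n^d$, so those terms are at most $\wh F(n;\lambda)a_n^d$. Hence $I_F(\mu) \le \wh F(0;\lambda) + \sum_{n\ge 1}\wh F(2n;\lambda)a_{2n}^d$. To see this bound is attained by $\mu_0 = \tfrac12(\delta_e + \delta_{-e})$, compute directly: $I_F(\mu_0) = \tfrac14\big(F(1) + 2F(-1) + F(1)\big) = \tfrac12(F(1)+F(-1))$, and separately, using $Y_{n,j}(-e) = (-1)^n Y_{n,j}(e)$, one finds $b_{n,\mu_0} = a_n^d$ for $n$ even and $b_{n,\mu_0} = 0$ for $n$ odd, so \eqref{1-3-1} gives $I_F(\mu_0) = \wh F(0;\lambda) + \sum_n \wh F(2n;\lambda)a_{2n}^d$; this also verifies the identity $\tfrac12(F(1)+F(-1)) = \wh F(0;\lambda) + \sum_n\wh F(2n;\lambda)a_{2n}^d$. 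For the uniqueness clause, suppose $(-1)^n\wh F(n;\lambda) > 0$ for all $n\ge 1$ and $\mu$ is a maximizer. Then equality in the chain above forces $b_{n,\mu} = 0$ for all odd $n$ and $b_{n,\mu} = a_n^d$ for all even $n$. The odd conditions say $\int_{\sph} Y_{n,j}\,d\mu = 0$ for all odd $n$; the even conditions, by the Cauchy--Schwarz equality analysis exactly as around \eqref{1-6-0}, say each $Y_{n,j}$ with $n$ even is constant $\mu$-a.e. Consider the measure $\mu^\sharp$ defined by $\mu^\sharp(E) = \tfrac12(\mu(E) + \mu(-E))$: it is symmetric, and since odd spherical harmonics integrate to zero against $\mu$ and even ones are $\mu$-a.e.\ constant, one checks $\int P\,d\mu = \int P\,d\mu^\sharp$ for every spherical polynomial $P$ — wait, more carefully, the right comparison is: the push-forward and symmetrization; then $\mu^\sharp$ has all even harmonics $\mu^\sharp$-a.e.\ constant as well, and repeating the support argument of Proposition~\ref{prop-1-2u} on the quotient sphere (identify $x\sim -x$) shows $\mu^\sharp$ is a single Dirac mass on $\mathbb{RP}^d$, i.e. $\mu^\sharp = \tfrac12(\delta_e + \delta_{-e})$; combined with the vanishing of odd moments of $\mu$, this forces $\mu = \tfrac12(\delta_e+\delta_{-e})$.

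For part~(ii): now $\wh F(2n;\lambda) = 0$ and $\wh F(2n-1;\lambda)\le 0$, so \eqref{1-3-1} collapses to $I_F(\mu) = \wh F(0;\lambda) + \sum_{n\ge 1}\wh F(2n-1;\lambda)b_{2n-1,\mu} \le \wh F(0;\lambda)$, with equality iff $\wh F(2n-1;\lambda)b_{2n-1,\mu} = 0$ for all $n$. If $\mu$ is symmetric then $\int_{\sph} Y_{m,j}\,d\mu = 0$ for every odd $m$ (since the integrand is odd under $x\mapsto -x$), so $b_{2n-1,\mu} = 0$ and $I_F(\mu) = \wh F(0;\lambda) = I_F(\sigma)$, the maximum. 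Conversely, if $\wh F(2n-1;\lambda) < 0$ for all $n$ and $\mu$ is a maximizer, then $b_{2n-1,\mu} = 0$ for all $n$, i.e. $\int_{\sph} g\,d\mu = 0$ for every $g\in\mathcal H_m$ with $m$ odd; since even harmonics are unconstrained this does not pin $\mu$ down, but it does force symmetry: for any continuous odd function $h$ on $\sph$, approximate $h$ uniformly by odd spherical polynomials (each such polynomial is a finite sum of odd-degree harmonics), getting $\int h\,d\mu = 0$; then for arbitrary $f\in C(\sph)$, writing $f(x) = f_+(x) + f_-(x)$ with $f_\pm$ the even/odd parts, $\int f\,d\mu = \int f_+\,d\mu = \int f_+(-x)\,d\mu(x)$, which is exactly the statement $\int f\,d\mu = \int f\,d\tilde\mu$ where $\tilde\mu$ is the pushforward of $\mu$ under the antipodal map; by Riesz representation $\mu = \tilde\mu$, i.e. $\mu$ is symmetric.

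The main obstacle is the uniqueness clause of part~(i): unlike Proposition~\ref{prop-1-2u}, the maximizer is not pinned down by "every continuous function is $\mu$-a.e.\ constant" (here only \emph{even} continuous functions are), so one must run the support argument of Proposition~\ref{prop-1-2u} on the real projective space $\mathbb{RP}^d = \sph/\{\pm 1\}$ for the symmetrized measure $\mu^\sharp$, and then separately use the vanishing of all odd moments of $\mu$ itself to upgrade from "$\mu^\sharp$ is antipodal Dirac" to "$\mu$ is antipodal Dirac"; care is needed because a priori $\mu$ could distribute unequal mass between $e$ and $-e$, but the odd-moment vanishing (take $g$ a linear function, i.e. a degree-one harmonic) rules this out. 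Everything else is a direct transcription of the arguments already given for Propositions~\ref{prop-1-2} and~\ref{prop-1-2u}, together with the parity relation $Y_{n,j}(-x) = (-1)^n Y_{n,j}(x)$ and the elementary identities for $I_F$ evaluated at $\tfrac12(\delta_e+\delta_{-e})$.
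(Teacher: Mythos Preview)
The paper does not actually prove Lemma~\ref{l.aux}; it only remarks that it ``may be proved by identical arguments'' to Propositions~\ref{prop-1-2} and~\ref{prop-1-2u}, so your approach is exactly the intended one and is essentially correct. Two small points need repair.

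In part~(ii), to invoke Lemma~\ref{l.decay} you must apply it to $-F$, not to $-F(-t)$: the $n$-th Gegenbauer coefficient of $-F(-t)$ is $-(-1)^n\wh F(n;\lambda)$, which for odd $n$ equals $\wh F(n;\lambda)\le 0$, so the hypothesis fails. By contrast $-F$ has $n$-th coefficient $-\wh F(n;\lambda)$, which is $0$ for even $n\ge 2$ and $\ge 0$ for odd $n$; Lemma~\ref{l.decay} then gives the needed summability.

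In the uniqueness clause of part~(i), the detour through $\mu^\sharp$ works but is roundabout and leaves a step implicit (that $\operatorname{supp}\mu^\sharp\subset\{e,-e\}$ forces $\operatorname{supp}\mu\subset\{e,-e\}$; this follows since $\mu\le 2\mu^\sharp$). It is cleaner to argue directly on $\mu$: from ``every even-degree $Y_{n,j}$ is $\mu$-a.e.\ constant'' and density, every \emph{even} $f\in C(\sph)$ is $\mu$-a.e.\ constant; then rerun the two-point support argument of Proposition~\ref{prop-1-2u} using antipodally symmetric disjoint neighborhoods and an even separating function to conclude $\operatorname{supp}\mu\subset\{e,-e\}$, and finish with the degree-one harmonic as you indicate.
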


%\begin{rem}If $\int_{-1}^1 |f(t)| (1-t^2)^{\ld-\f12}\, dt <\infty$ and $f$ is continuously differentiable at $x_0\in [-1,1)$,
%then
%$$f(x_0)=\sum_{k=0}^\infty \wh{f}(k) \f {k+\ld}{\ld} C_k^\ld (x_0).$$
%\end{rem}

\section{Geodesic distance Riesz energy integrals}\label{s.gdei}
We now apply the results of the previous section to the specific case of the geodesic distance energy integral \eqref{gdei}.  In order to avoid singularities, we introduce standard modifications of the potentials.   For $t\in [-1,1]$ and $0\leq \va<1$, we define as in \eqref{gdei}
\begin{align*}
   F_{\da,\va} (t)=\begin{cases}(\va+\arccos t)^\da, & \   \  \text{if $\da\neq 0$};\\
   \log\Bl(\f \pi {\va+\arccos t}\Br),&\   \  \text{if $\da=0$}.
   \end{cases}
\end{align*}
We write $F_\da (t)=F_{\da,0}(t)$.
For $\mu\in\mathcal{B}$, define
$$ I_{d, \da}(\mu):=I_{F_\da} (\mu )=\int_{\sph}\int_{\sph} F_\da(x\cdot y)\, d\mu(x)d\mu(y).$$

The main goal in this section is to show the following theorem, which constitutes parts \eqref{G1} and \eqref{G2} of Theorem \ref{GeodExt}.
\begin{thm}\label{thm-2-1}
The normalized Lebesgue measure $d\s$ on $\sph$ is  the unique maximizer for the integral $I_{d, \da}(\mu)$ when  $\da\in (0,1)$, and    is the unique minimizer of $I_{d, \da}(\mu)$ when $-d<\da\leq 0$.
\end{thm}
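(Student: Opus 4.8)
The plan is to apply Propositions~\ref{prop-1-2} and~\ref{prop-1-2u} directly: by those results, it suffices to show that the Gegenbauer coefficients $\wh{F_\da}(n;\ld)$ have a definite sign for all $n \ge 1$ --- namely $\wh{F_\da}(n;\ld) > 0$ for all $n \ge 1$ when $-d < \da < 0$ and when $0 < \da < 1$ (in which case $\sa$ is the unique minimizer, resp.\ maximizer, according to whether $\da < 0$ or $\da > 0$ flips the inequality via the remark after Proposition~\ref{prop-1-2u}), and $\wh{F_0}(n;\ld) > 0$ for the logarithmic case $\da = 0$. So the whole theorem reduces to a single sign computation, which I expect to be isolated as Lemma~\ref{lem-2-1}.

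To carry out the sign computation, I would write
\[
\wh{F_\da}(n;\ld) = c_\ld \int_{-1}^1 (\arccos t)^\da \, R_n^\ld(t)\, (1-t^2)^{\ld - \f12}\, dt,
\]
and substitute $t = \cos\ta$ to pass to an integral over $\ta \in [0,\pi]$ of $\ta^\da$ against $R_n^\ld(\cos\ta)$ times $(\sin\ta)^{2\ld}$. The natural strategy is then to use an integral representation that makes positivity transparent. One clean route: differentiate under the integral sign in a parameter, or integrate by parts to transfer derivatives onto $\ta^\da$, exploiting that $\tf{d}{d\ta}\ta^\da = \da \ta^{\da-1}$ changes sign in a controlled way depending on whether $\da \in (0,1)$ or $\da \in (-d,0)$. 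Alternatively, one can use the known formula expressing $\int_0^\pi \ta^\da\, C_n^\ld(\cos\ta)(\sin\ta)^{2\ld}\, d\ta$ via Gamma functions (a Mehler--Dirichlet / beta-integral type identity), obtaining $\wh{F_\da}(n;\ld)$ as an explicit ratio of Gamma functions whose sign is then read off from the reflection formula and the location of poles --- this is presumably what pins down the precise range $-d < \da$ as the natural cutoff, since for $\da \le -d$ the integral diverges. A third option, likely the most robust, is to use the subordination formula writing $(\arccos t)^\da$ (for $\da < 0$) or a suitable finite-difference of it (for $0 < \da < 1$) as a positive superposition of functions already known to be positive definite on $\sph$, e.g.\ via the identity $u^{-\sa} = \f{1}{\Ga(\sa)}\int_0^\infty e^{-ru} r^{\sa-1}\, dr$ combined with positive definiteness of $t \mapsto e^{-r\arccos t}$ or of the heat kernel on the sphere.

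The main obstacle will be the positivity of $\wh{F_\da}(n;\ld)$ in the range $0 < \da < 1$: here the potential is bounded and the naive superposition trick fails because $\ta^\da$ is not completely monotone, so one must instead exploit that $\ta^\da$ for $\da \in (0,1)$ is a Bernstein function --- equivalently $\ta^\da = c_\da \int_0^\infty (1 - e^{-r\ta}) r^{-1-\da}\, dr$ --- and since adding the constant $-c_\da\int_0^\infty r^{-1-\da}e^{-r\cdot 0}\,dr$-type terms does not affect $\wh{F}(n;\ld)$ for $n \ge 1$, one reduces to showing $-e^{-r\arccos t}$ has nonnegative coefficients, i.e.\ $e^{-r\arccos t}$ has nonpositive coefficients for $n \ge 1$, which is a statement one can verify by the circle case computation and a dimension-walk / Gegenbauer recursion, or by noting $e^{-r\ta}$ is convex and decreasing on $[0,\pi]$. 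One then needs enough decay to legitimately interchange the $r$-integral with the Gegenbauer projection, which is where Lemma~\ref{l.decay} and uniform convergence of \eqref{1-1-16} are invoked. The remaining cases are comparatively routine: for $-d < \da < 0$ the completely monotone representation gives positivity immediately once the endpoint $\da = -d$ is identified as the integrability threshold, and $\da = 0$ follows by the same argument applied to $\log(\pi/\ta) = \int_0^\infty (e^{-r\ta} - e^{-r\pi}) r^{-1}\, dr$, or by taking a limit $\da \to 0^+$ of $\da^{-1}(\pi^\da - (\arccos t)^\da)$ up to constants. Once the sign of every $\wh{F_\da}(n;\ld)$, $n \ge 1$, is established, Propositions~\ref{prop-1-2} and~\ref{prop-1-2u} finish the proof, including uniqueness.
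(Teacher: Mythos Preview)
Your reduction to the sign of the Gegenbauer coefficients via Propositions~\ref{prop-1-2}--\ref{prop-1-2u} is exactly right, but the execution has gaps. First, a sign error: for $\da\in(0,1)$ you need $\wh{F_\da}(n;\ld)<0$ (not $>0$) in order that $\sa$ be a \emph{maximizer}; there is no external ``flip'' coming from the sign of $\da$ --- the sign of the coefficients alone decides whether $\sa$ minimizes or maximizes. Consistently, your Bernstein reduction requires $e^{-r\arccos t}$ to have \emph{positive} (not nonpositive) coefficients for $n\ge1$; and your proposed justification ``$e^{-r\ta}$ is convex and decreasing on $[0,\pi]$'' is not a valid criterion for positive definiteness on $\sph$ when $d\ge2$ (this is a P\'olya-type criterion that works only on the circle). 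Second, for $-d<\da\le0$ the potential $F_\da$ is unbounded at $t=1$, so Propositions~\ref{prop-1-2}--\ref{prop-1-2u} (which require $F\in C[-1,1]$) cannot be applied directly. The paper handles this by introducing the regularization $F_{\da,\va}(t)=(\va+\arccos t)^\da$, applying the propositions to the continuous $F_{\da,\va}$, and then passing to the limit $\va\to0$ via monotone convergence, with a separate short argument for uniqueness. Your plan omits this step entirely.

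The paper's route to the sign computation (Lemma~\ref{lem-2-1}) is also different from and more elementary than your integral-representation approach: write $\arccos t=\tf{\pi}{2}-A(t)$ where $A(t)$ is an odd power series with positive coefficients, expand $\bigl(\tf{\pi}{2}+\va-A(t)\bigr)^\da$ by the binomial (or logarithmic, or negative-binomial) series, and read off that all Maclaurin coefficients of $F_{\da,\va}$ beyond the constant term have a fixed sign. Since each monomial $t^k$ has nonnegative Gegenbauer coefficients (Rodrigues' formula plus integration by parts, see \eqref{2-2}), the conclusion follows at once. Your Bernstein/completely-monotone route can be made to work --- indeed $e^{-r\arccos t}=e^{-r\pi/2}e^{rA(t)}$ has all positive Taylor coefficients by the very same trick --- but the auxiliary positivity statement you need, once proved correctly, essentially rests on the paper's Maclaurin argument anyway.
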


The following lemma plays a crucial role in the proof of Theorem \ref{thm-2-1}.
\begin{lem}\label{lem-2-1}Let $\va\in [0,1)$ and $\ld>0$.  For $\da>-(2\ld+1)$, define
$$ \mathcal I_{n,\va}^\da:=\int_{-1}^1 F_{\da,\va} (t) C_n^\ld (t) (1-t^2)^{\ld-\f12}\, dt,\  \  n=0,1,\cdots.$$
 Then the following statements hold:
\begin{enumerate}[\rm (i)]
\item\label{ll1} If $\da\in (0,1)$, then $\mathcal I_{n,\va}^\da<0$ for all $n=1,2,\cdots$.
    \item\label{ll2} If $-(2\ld+1)<\da\leq 0$, then $\mathcal I_{n,\va}^\da >0$ for $n=0,1,2,\cdots$.
\end{enumerate}
\end{lem}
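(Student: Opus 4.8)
The plan is to reduce everything to the sign of a single integral transform of a power function, and to understand that transform by integration by parts combined with a judicious use of Gegenbauer differential/recurrence identities. Write $u = \arccos t \in [0,\pi]$, so $t = \cos u$ and $(1-t^2)^{\lambda - \f12}\,dt = (\sin u)^{2\lambda}\,du$. Then
\begin{equation}\label{p.1}
\mathcal I_{n,\va}^\da = \int_0^\pi (\va + u)^\da\, C_n^\ld(\cos u)\, (\sin u)^{2\lambda}\,du.
\end{equation}
The first observation is that, for $n\ge 1$, the function $t\mapsto C_n^\ld(t)(1-t^2)^{\lambda-\f12}$ is a derivative: by the standard Gegenbauer identity $\f{d}{dt}\big[(1-t^2)^{\lambda+\f12}C_{n-1}^{\lambda+1}(t)\big] = -\,c_{n,\lambda}\,(1-t^2)^{\lambda-\f12}C_n^\lambda(t)$ for an explicit positive constant $c_{n,\lambda}$. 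So in \eqref{p.1} I would integrate by parts once; the boundary terms vanish because of the factor $(1-t^2)^{\lambda+\f12} = (\sin u)^{2\lambda+1}$, and one is left with
\begin{equation}\label{p.2}
\mathcal I_{n,\va}^\da = \frac{\da}{c_{n,\lambda}} \int_0^\pi (\va+u)^{\da-1}\,(\sin u)^{2\lambda+1}\, C_{n-1}^{\lambda+1}(\cos u)\,du .
\end{equation}
This already explains the sign change at $\da = 0$: the prefactor $\da$ flips, and the remaining integral is the $(\lambda+1)$-analogue of the same object with exponent $\da - 1 < 0$. So the real task is: for $\beta < 0$ and $\mu = \lambda + 1 > 0$, determine the sign of $J_m(\beta) := \int_0^\pi (\va+u)^{\beta}(\sin u)^{2\mu}C_m^{\mu}(\cos u)\,du$ for all $m\ge 0$ — and show it is positive when $-1 < \beta < 0$ (which gives part \eqref{ll1} via \eqref{p.2} with the sign of $\da$), and positive when $\beta \le 0$ down to $\beta > -(2\lambda+1)$ (which, after noting that part \eqref{ll2} should be handled slightly differently, gives \eqref{ll2}).

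For part \eqref{ll2} directly — i.e. $\mathcal I_{n,\va}^\da > 0$ for $-(2\lambda+1) < \da \le 0$ and \emph{all} $n\ge 0$, including $n=0$ — the cleanest route is probably to iterate the integration-by-parts / index-raising step of \eqref{p.2} enough times to reach a nonnegative exponent, or alternatively to represent $(\va+u)^\da$ for $\da<0$ as a superposition $\int_0^\infty e^{-s(\va+u)} s^{-\da-1}\,ds /\Gamma(-\da)$ of decreasing exponentials and reduce to the sign of $\int_0^\pi e^{-su}(\sin u)^{2\lambda}C_n^\lambda(\cos u)\,du$. This last integral is a classical positive-definiteness-type quantity: the function $u\mapsto e^{-su}$ restricted to $[0,\pi]$, extended appropriately, is completely monotone, and one expects its Gegenbauer coefficients to be positive; this can be nailed down by yet another integration by parts against $C_n^\lambda$, reducing $n$, or by citing a known formula for $\int_0^\pi e^{-su}(\sin u)^{2\lambda}C_n^\lambda(\cos u)\,du$ in terms of a positive expression (it is essentially a ratio of Gamma functions times a hypergeometric-type factor with a fixed sign). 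The Laplace-transform trick has the advantage of turning the awkward power into an object whose positivity is robust and well documented; the case $\da = 0$ (the logarithm) is then obtained either by the same method with the representation $\log\frac{\pi}{\va+u} = \int_0^\infty (e^{-s(\va+u)} - e^{-s\pi})\,\f{ds}{s}$, or simply as the $\da\to 0^-$ limit of \eqref{ll2}.

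For part \eqref{ll1} ($0 < \da < 1$, $n\ge 1$): after \eqref{p.2} the prefactor $\da/c_{n,\lambda}$ is positive, so I must show $\int_0^\pi (\va+u)^{\da-1}(\sin u)^{2\lambda+1}C_{n-1}^{\lambda+1}(\cos u)\,du > 0$ for $n\ge 1$, with exponent $\da - 1 \in (-1,0)$. If $n = 1$ this is $\int_0^\pi (\va+u)^{\da-1}(\sin u)^{2\lambda+1}\,du > 0$, trivially true. For $n \ge 2$, apply the same index-raising integration by parts again (now with parameter $\lambda + 1$): this produces a factor of $(\da - 1) < 0$ and an integral against $C_{n-2}^{\lambda+2}$ with exponent $\da - 2$, and so on. After $n-1$ such steps one reaches $\int_0^\pi (\va+u)^{\da-n}(\sin u)^{2\lambda + 2n -1}\,du > 0$ (manifestly positive, since $\da - n$ is just some real exponent and the measure is positive on $(0,\pi)$ — no new singularity issues because the $\sin$ factor compensates at the endpoints in every intermediate step), multiplied by a product of sign-definite constants; I would carefully track that the accumulated sign is exactly $(-1)^{n-1}$ times $\prod(\da - j)$ over the relevant $j$, which combines with the leading $\da$ in \eqref{p.2} and the $(-1)$-type constants from each integration by parts to yield an overall negative sign for $\mathcal I_{n,\va}^\da$. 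So the whole proof is really one integration-by-parts identity applied repeatedly, plus bookkeeping of signs.

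The main obstacle I anticipate is precisely this bookkeeping: getting the sign of the constant $c_{n,\lambda}$ and of each boundary/prefactor term exactly right through an $n$-fold iteration, and making sure the intermediate integrands remain integrable (the product $(\sin u)^{2\lambda + 2k - 1}$ and power $(\va+u)^{\da - 1 - k}$ must still give a convergent, and in fact finite positive, integral at $u = 0$ when $\va = 0$ — this is where the hypothesis $\da > -(2\lambda+1)$ is used in case \eqref{ll2}, and it should be checked that in case \eqref{ll1} the condition $\da > 0$ together with the $\sin$ powers keeps things safe). A secondary technical point is justifying the Laplace-representation manipulation (Fubini, convergence at $u = 0$) if I take that route for \eqref{ll2}; the power-of-a-sum $(\va+u)^\da$ with $\va \ge 0$ is well-behaved, so this should be routine but must be stated. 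I do not expect to need any hard new estimate here — the sharp asymptotics of $\mathcal I_{n,\va}^\da$ in $n$ are a separate matter (Lemma \ref{lem-4-3}); for the present lemma only the sign is at stake, and the iterated integration by parts delivers it.
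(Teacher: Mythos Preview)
Your integration-by-parts formula carries a sign error (the correct prefactor is $-\delta/c_{n,\lambda}$, not $+\delta/c_{n,\lambda}$; track the chain rule through $u=\arccos t$), but that is cosmetic. The substantive gap is that the iteration does not close up. After one step the integrand is $(\varepsilon+u)^{\delta-1}(\sin u)^{2\lambda+1}C_{n-1}^{\lambda+1}(\cos u)$, and to apply the same Rodrigues-type identity with parameter $\mu=\lambda+1$ you would need the power of $\sin u$ to equal $2\mu=2\lambda+2$, not $2\lambda+1$. Equivalently, in the $t$-variable the weight after one step is $(1-t^2)^{\lambda}$, which is off by a factor $(1-t^2)^{-1/2}$ from the orthogonality weight $(1-t^2)^{\lambda+1/2}$ for $C_{n-1}^{\lambda+1}$; the ``same step with $\lambda+1$'' is therefore unavailable. (Note also that your first step raised the $\sin$-power by $1$, whereas your claimed terminal exponent $2\lambda+2n-1$ would require each step to raise it by $2$.) So for $n\ge 2$ in part~(i) there is no mechanism in your sketch to reach a sign-definite integral; $n=1$ is fine, and $n=2$ happens to work because $C_1^{\lambda+1}(\cos u)=2(\lambda+1)\cos u$ makes $(\sin u)^{2\lambda+1}\cos u$ an exact derivative, but the pattern does not continue. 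Your Laplace-transform route for part~(ii) is viable in principle, but the key positivity $\int_0^\pi e^{-su}(\sin u)^{2\lambda}C_n^\lambda(\cos u)\,du>0$ is left as an assertion; the quickest way to justify it is to observe that $e^{-s\arccos t}$ has a positive Maclaurin series in $t$ --- which is precisely the device the paper exploits directly.

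The paper's proof is entirely different and sidesteps all of this. It first records (via Rodrigues' formula) that $\int_{-1}^1 t^k C_n^\lambda(t)(1-t^2)^{\lambda-1/2}\,dt$ vanishes unless $k\ge n$ with $k-n$ even, in which case it is strictly positive. It then expands $F_{\delta,\varepsilon}(t)$ as a Maclaurin series in $t$: writing $\arccos t=\tfrac{\pi}{2}-A(t)$ where $A(t)=\sum_{k\ge 0}\tbinom{2k}{k}4^{-k}(2k+1)^{-1}t^{2k+1}$ has only positive coefficients, one has $F_{\delta,\varepsilon}(t)=(\tfrac{\pi}{2}+\varepsilon)^{\delta}\bigl(1-\tfrac{2A(t)}{\pi+2\varepsilon}\bigr)^{\delta}$, and the binomial series of $(1-x)^{\delta}$ has all non-constant coefficients negative for $0<\delta<1$ and positive for $\delta<0$ (with $-\log(1-x)$ handling $\delta=0$). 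Hence every Maclaurin coefficient of $F_{\delta,\varepsilon}$ beyond the constant has a fixed sign, and term-by-term integration against $C_n^\lambda$ gives both parts of the lemma at once.
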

\begin{proof}
\eqref{ll1}  By     Rodrigues' formula for ultraspherical polynomials (see, for instance, \cite[4.1.72]{Sz}),
\begin{equation*}\label{2-1}
    C_n^\ld (t)=\f {(-1)^n 2^n}{n!} \f{\Ga(n+\ld)\Ga(n+2\ld)}{\Ga(\ld) \Ga(2n+2\ld)} (1-t^2)^{-(\ld-\f12)} \Bl( \f d {dt}\Br)^n (1-t^2)^{n+\ld -\f12},
\end{equation*}
  and the fact that $C_n^\ld(-t)=(-1)^n C_n^\ld(t)$ for all $t\in [-1,1]$,  it is easily seen   from integration by parts
 that for $k,n=0,1,\cdots$,
\begin{align}\label{2-2}
    \int_{-1}^1 t^k C_n^\ld (t)(1-t^2)^{\ld-\f12}\, dt \begin{cases}
    >0, &  \  \  \text { if $k\ge n$ and $k-n$ is even},\\
    =0,&\   \  \text{otherwise.}
    \end{cases}
\end{align}
Thus, for the proof of assertion \eqref{ll1}, it is sufficient to show that for $\da\in (0,1)$,
\begin{equation}\label{2-3-0}
    F_{\da,\va}(t)=a_0(\va)+\sum_{k=1}^\infty a_k(\va) t^k, \  \  |t|<1\   \ \text{with $ a_k(\va)< 0$ for $ k=1,2,\cdots$}.
\end{equation}
Indeed,
once  \eqref{2-3-0} is proved, then using \eqref{2-2} we obtain that  for $n\ge 1$,
\begin{align*}
    \mathcal I_{n,\va}^\da&=\sum_{k=0}^\infty a_k(\va) \int_{-1}^1t^k C_n^\lambda (t)(1-t^2)^{\ld-\f12}\, dt \notag\\ &=-\sum_{j=0}^\infty |a_{n+2j}(\va)|\int_{-1}^1t^{n+2j} C_{n}^\lambda (t)(1-t^2)^{\ld-\f12}\, dt<0.
\end{align*}

  To show  \eqref{2-3-0}, we use   the  Maclaurin series   of the function $\arccos t$ on the interval $[-1,1]$:
   \begin{equation}\label{2-4}
    \arccos \ t =\f \pi 2 -\sum_{n=0}^\infty \f {\binom{2n}{n}}{4^n (2n+1)} t^{2n+1}=:\f \pi2-A(t),\   \  |t|\leq 1.
   \end{equation}
   The main point in \eqref{2-4} lies in the fact that
\begin{equation}\label{2-5}
  A(t):= \sum_{n=0}^\infty \f {\binom{2n}{n}}{4^n (2n+1)} t^{2n+1}
\end{equation}
   is an odd power series  with positive coefficients.
Clearly,
   \begin{align}\label{2-6}
     |A(t)|=\Bl|\f \pi 2-\arccos t\Br|< \f \pi2,\  \ t\in (-1,1).
   \end{align}
 Thus,   using \eqref{2-4} and \eqref{2-6}, we obtain that for $t\in (-1,1)$ and $\va\in [0,1)$,
   \begin{align*}
    F_{\da,\va} (t) =\Bl(\f \pi 2+\va\Br)^\da \Bl(1-\f {2A(t)}{\pi+2\va} \Br)^\da=\Bl(\f \pi 2+\va\Br)^\da+\Bl(\f \pi {2+\va}\Br)^\da\sum_{j=1}^\infty b^\da_j \Bl(\f {2 A(t)}{\pi+2\va}\Br)^j,
   \end{align*}
   where
   $$ b_j^\da =\f {(-1)^j \da(\da-1)\cdots(\da-j+1)}{j!}=-\f { \da(1-\da)\cdots(j-1-\da)}{j!}.$$
Clearly, each $b_j^\da$ ($j\ge 1$)  is negative for $\da\in (0,1)$.
Then  \eqref{2-3-0}  follows from \eqref{2-5}.\\

\eqref{ll2} As in the proof of assertion  \eqref{ll1} ,
  it suffices  to show that $F_{\da,\va}(t)$ has a Maclaurin series representation with positive  coefficients on the interval $[-1,1]$. For $\da=0$, we use the Maclaurin series    of the function $\log (1-t)$ on the interval $(-1,1)$:
$$\log (1-x)=-\sum_{n=1}^\infty \f {x^n}n,\  \ |x|<1.$$
We then obtain from \eqref{2-6}  that for $t\in (-1,1)$,
\begin{align}
     F_{0,\va}(t)&=\log \f{2\pi}{\pi+2\va} -\log\Bl(1-\f {2 A(t)}{\pi+2\va}\Br)\notag\\
     &=\log \f{2\pi}{\pi+2\va} +\sum_{n=1}^\infty \f {1}{n}\Bl(\f {2A(t)}{\pi+2\va}\Br)^n.\label{2-7}
    \end{align}
For $\da=-s<0$, we have
\begin{align}\label{2-8}
    F_{\da,\va}(t)&=\Bl(\f \pi2+\va-A(t)\Br)^{-s}\notag\\
    &=\Bl(\f 2{\pi+2\va}\Br)^{-s} \sum_{j=0}^\infty \f {s(s+1)\cdots (s+j-1)}{j!} \Bl( \f {2A(t)}{\pi+2\va}\Br)^j.
\end{align}
 Combining \eqref{2-7}, \eqref{2-8} with \eqref{2-5}, we conclude that if $\da\in (-2\ld-1, 0]$ and $\va\in [0,1)$, then  all the coefficients of the Maclaurin series of the function  $F_{\da,\va}(t)$  are positive. Assertion \eqref{ll2}  then follows by \eqref{2-2}.
\end{proof}

We are now in a position to show Theorem \ref{thm-2-1}.

\begin{proof}[Proof of Theorem \ref{thm-2-1}] For $\da\in (0,1)$, the function $F_\da(t)$ is continuous on $[-1,1]$, and hence the stated assertion follows directly from Proposition \ref{prop-1-2} and  part \eqref{ll1} of Lemma \ref{lem-2-1}.

For $-d<\da\leq 0$, the function $F_\da$ is not continuous at $t=1$ and, therefore, we need a slight modification of the proof.  For the moment, we assume that  $\da\neq 0$ and write $\da=-s$ with $0< s<d$.
Recall that for each $\va\in (0,1)$,
$$ F_{\da, \va}(t)=\Bl( \arccos t+\va\Br)^{-s},\   \   \  t\in [-1,1],$$
 and by part \eqref{ll2} of  Lemma \ref{lem-2-1},
$$\wh{F_{\da, \va}}(n;\ld)=c_n\int_{-1}^1 F_{\da,\va}(t) C_n^\ld(t) (1-t^2)^{\ld-\f12}\, dt >0,\   \  n=0,1,\cdots.$$
 Using Proposition \ref{prop-1-2}, we conclude  that  $I_\mu (F_{\da,\va})$ has a unique   minimizer $d\s$. Hence,    for any $\mu\in \mathcal{B}$ and any $\va>0$,
 \begin{align*}
  \int_{\sph}\int_{\sph} \big( \rho (x, y)\big)^{-s}\, d\mu(x)\, d\mu(y)&\ge   \int_{\sph}\int_{\sph} \big( \va + \rho(x, y)\big)^{-s}\, d\mu(x)\, d\mu(y)\\
  &\ge \int_{\sph}\int_{\sph} \big(\va + \rho(x, y)\big)^{-s}\, d\sa(x)\, d\sa(y).
\end{align*}
Letting $\va\to 0$, and using the monotone convergence theorem, we get
$$\int_{\sph}\int_{\sph} \big(\rho(x, y)\big)^{-s}\, d\mu(x)\, d\mu(y)\ge \int_{\sph}\int_{\sph} \big(\rho(x, y)\big)^{-s}\, d\sa(x)\, d\sa(y),\  \  \forall \mu\in\mathcal{B}.$$
This shows that $\s$ is a minimizer of $I_{F_\da} (\mu) $ for $0<s=-\da<d$.

 Next, we show  the minimizer is unique. Let $\mu_0\in\mathcal{B}$. If  $d\mu_0\neq d\s$, then  there must exist  a spherical harmonic $P$ of degree $n_0\ge 1$ such that
$\int_{\sph} P(x)\, d\mu_0(x)\neq 0$.  By  \eqref{1-5}, this implies that
\begin{align*}
    b_{n_0,\mu_0}&=\f {n_0+\ld}{\ld}\int_{\sph}\int_{\sph}C_{n_0}^\ld(x\cdot y)\, d\mu_0(x)\, d\mu_0(y)\\
    &\ge \f 1{\|P\|_2^2} \Bl(\int_{\sph} P(x)\, d\mu_0(x)\Br)^2 \ge c>0.
\end{align*}
 However, according to \eqref{1-3-1}, we have that for any $\va\in (0,1)$,
$$ I_{F_\da} ({\mu_0} )  \ge I_{F_{\da,\va} } ({\mu_0})  \ge \wh{F_{\da, \va}}(0;\ld) + \wh{F_{\da, \va}}(n_0;\ld) b_{n_0,\mu_0}.$$
Letting $\va\to 0$, we obtain from part \eqref{ll2} of  Lemma \ref{lem-2-1} (ii) that
$$ I_{F_\da} (\mu_0)\ge I_{F_\da} (\sa ) + \wh{F_{\da}}(n_0;\ld) b_{n_0, \mu_0}>I_{F_\da} (\sa ).$$
Since $\mu_0$ is an arbitrary Borel probability measure on $\sph$, this shows the uniqueness of the minimizer. Finally, we point out that the above proof with a slight modification works equally well for the case of $\da=0$.
\end{proof}

 The methods employed here are quite standard in the context of energy optimization on the sphere (see, e.g.,  \cite{KS} for the classical Riesz energy). While Theorem \ref{thm-2-1} covers parts \eqref{G1} and \eqref{G2} of Theorem \ref{GeodExt}, the remaining two cases  (proved in \cite{BD1}) could be proved in the same way, using the above computations and results of Lemma \ref{l.aux}.

Notice that we have, in fact, used Taylor expansions of the underlying function $F$ in order to obtain information about the signs of the ultraspherical coefficients. Recently, after the first author's  presentation of the results of this paper and \cite{BD1}, Y.S. Tan \cite{Tan} found a beautiful alternative proof of parts \eqref{G2}-\eqref{G4} of Theorem  \ref{GeodExt}, which involves only Taylor series (does {\emph{not}} resort to the use of spherical harmonics) and uses an interesting ``tensorization trick". While this method is somewhat less general than the one presented in \S \ref{prelim} (since it requires $F$ to be analytic), it is applicable to a variety of natural situations (including Bjorck's theorem \cite{Bjorck}).

\section{ Discrepancy and Stolarsky principle}\label{s.stol}

Let us denote by $\Phi_d$ the set of all continuous functions $F$ on $[-1,1]$ for which $\wh{F}(n; \ld)\ge 0$ for all $n \in \mathbb N \cup \{ 0 \}$, where, as before, $\ld=\f{d-1}2$. Functions in the class $\Phi_d$ are known as {\emph{positive definite}} functions on the sphere and, up to constants, are precisely the functions discussed in \S \ref{prelim} (Proposition \ref{prop-1-2}). Their connection to energy optimization is well known \cite{schoen}. There is a variety of characterizations of the class $\Phi_d$, but we shall particularly use  the following.

\begin{lem} A function $F\in \Phi_d$ if and only if there exists a function $f\in L^2_{w_\ld}[-1,1]$ such that
\begin{equation}\label{3-1}
    F(x\cdot y)=\int_{\sph} f(x\cdot z) f(z\cdot y)\, d\s(z),\   \ x, y\in \sph.
\end{equation}
 \end{lem}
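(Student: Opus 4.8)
The plan is to establish both implications via the spherical harmonic expansion, thinking of \eqref{3-1} as a factorization of the integral operator with kernel $F(x\cdot y)$ through $L^2(\sph)$.

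\medskip

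\emph{Sufficiency.} Suppose such an $f\in L^2_{w_\ld}[-1,1]$ exists. Expand $f$ in Gegenbauer polynomials, $f(t)\sim \sum_{n\ge 0}\wh f(n;\ld)\,\tf{n+\ld}\ld C_n^\ld(t)$, and apply the Funk--Hecke formula \eqref{e.FH}: for fixed $z$, the function $x\mapsto f(x\cdot z)$ has spherical-harmonic expansion whose degree-$n$ component is $\wh f(n;\ld)\sum_{j} Y_{n,j}(x)Y_{n,j}(z)$ (using the addition formula \eqref{1-1-1}). Plugging into the right side of \eqref{3-1} and integrating in $z$ against $d\s$, orthonormality of $\{Y_{n,j}\}$ collapses the double sum, yielding
\begin{equation*}
\int_{\sph} f(x\cdot z) f(z\cdot y)\, d\s(z)=\sum_{n=0}^\infty \big(\wh f(n;\ld)\big)^2 \f{n+\ld}\ld C_n^\ld(x\cdot y).
\end{equation*}
Comparing with \eqref{1-1-16} gives $\wh F(n;\ld)=\big(\wh f(n;\ld)\big)^2\ge 0$, so $F\in\Phi_d$. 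One technical point to be careful about: $f\in L^2_{w_\ld}$ guarantees $\sum_n (\wh f(n;\ld))^2\,\tf{n+\ld}\ld C_n^\ld(1)<\infty$ by Parseval, which is exactly what is needed to justify interchanging sum and integral and to see that the resulting $F$ is continuous (indeed $\wh F$ already satisfies the decay \eqref{1-1-a} of Lemma \ref{l.decay}); also, $f$ should be interpreted so that $f(x\cdot z)$ is measurable on $\sph\times\sph$, which follows since the Gegenbauer partial sums converge in $L^2(\s\times\s)$.

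\medskip

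\emph{Necessity.} Conversely, given $F\in\Phi_d$, define $f$ by its Gegenbauer coefficients $\wh f(n;\ld):=\sqrt{\wh F(n;\ld)}\ge 0$, i.e.
\begin{equation*}
f(t)\sim \sum_{n=0}^\infty \sqrt{\wh F(n;\ld)}\ \f{n+\ld}\ld C_n^\ld(t).
\end{equation*}
By Lemma \ref{l.decay}, since $F$ is continuous with nonnegative coefficients, $\sum_n n^{2\ld}\wh F(n;\ld)<\infty$; as $\tf{n+\ld}\ld C_n^\ld(1)\sim n^{2\ld}$, this is precisely the condition $\sum_n \wh f(n;\ld)^2\,\tf{n+\ld}\ld C_n^\ld(1)<\infty$ ensuring $f\in L^2_{w_\ld}[-1,1]$ (Parseval again). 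Then the computation from the sufficiency direction, run in reverse, shows that $\int_{\sph} f(x\cdot z)f(z\cdot y)\,d\s(z)$ has Gegenbauer expansion $\sum_n \wh F(n;\ld)\,\tf{n+\ld}\ld C_n^\ld(x\cdot y)$, which converges uniformly and absolutely (again by Lemma \ref{l.decay}) to $F(x\cdot y)$; hence \eqref{3-1} holds for all $x,y\in\sph$.

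\medskip

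The main obstacle is not conceptual but bookkeeping: one must cleanly justify the term-by-term integration in \eqref{3-1} and the identification of the resulting function with $F$ in the pointwise (not merely $L^2$) sense. The key enabling fact is Lemma \ref{l.decay}, which converts ``coefficients nonnegative'' into ``coefficients summable against $n^{2\ld}$'', and this is exactly the bridge between membership in $\Phi_d$ (a condition on $F$) and membership of the square-root symbol in $L^2_{w_\ld}$; once that is in hand, the Funk--Hecke/addition-formula computation is routine and the two directions are mirror images of each other.
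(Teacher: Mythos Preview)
Your proof is correct and follows essentially the same approach as the paper: define $f$ via $\wh f(n;\ld)=\sqrt{\wh F(n;\ld)}$, invoke Lemma~\ref{l.decay} together with Parseval to verify $f\in L^2_{w_\ld}$, and identify the convolution coefficients as $(\wh f(n;\ld))^2$. You supply more detail on the Funk--Hecke/addition-formula computation and on continuity of $F$ in the sufficiency direction, where the paper is terse, but the argument is the same.
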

\begin{proof} The {\it sufficiency}
  part  is obvious. Indeed, if \eqref{3-1} holds  for some $f\in L_{w_\ld}^2[-1,1]$, then $F$ is continuous and $\wh{F}(n; \ld)=|\wh{f}(n,\ld)|^2\ge 0$ for all $n=\mathbb N \cup \{ 0\}$.

It remains to show the {\it necesity.}
  Let $F$ be a continuous function on $[-1,1]$  such that $\wh{F}(n;\ld)\ge 0$ for all $n\ge 0$.
Define $$ f(t)=\sum_{n=0}^\infty \sqrt{\wh{F}(n;\ld)} \f {n+\ld}{\ld} C_n^\ld (t),\ \  t\in [-1,1].$$
$f$ is a well defined function in $L^2_{w_\ld}[-1,1]$ since,
by Plancherel's formula and \eqref{1-1-a},
\begin{align*}
    c_d\int_{-1}^1 |f(t)|^2 (1-t^2)^{\ld-\f12}\, dt =\sum_{n=0}^\infty \wh{F}(n;\ld)\f {n+\ld}{\ld} C_n^\ld (1)<\infty.
\end{align*}
Furthermore, \eqref{3-1} holds since $\wh{F}(n; \ld)=|\wh{f}(n,\ld)|^2$ for all $n\ge 0$.
   \end{proof}

   For the rest of this section, we will assume that $F\in\Phi_d$, and  $f\in L_{w_\ld}^2[-1,1]$ is chosen so that   \eqref{3-1} is satisfied.

Given  a finite set of points  $Z=\{z_1, \cdots, z_N\}\subset \sph$, we define its $L^2$  discrepancy with respect to a function  $f: [-1,1] \rightarrow \mathbb R$  by
\begin{align*}
    D_{L^2, f}(Z) =\Bl(\int_{\sph}\Bl|\int_{\sph} f(x\cdot y)\, d\s(y)-\f 1N \sum_{j=1}^N f(x\cdot z_j)\Br|^2\, d\s(x)\Br)^{\f12}.
\end{align*}
We define the optimal $L^2$ discrepancy  by setting
$$\mathcal{D}_{L^2, f,N}=\inf_Z D_{L^2, f}(Z),$$
where the infimum is taken over all $Z\subset \sph$ with $\# Z=N$. The discrepancy $D_{L^2, f}(Z)$  measures the uniformity of the finite distribution of points $Z$ with respect to the function $f$. If one takes, for example, $f (\tau) = {\bf 1}_{[1-t,1]} (\tau)$, one obtains the well-studied spherical cap discrepancy, see \eqref{e.sphcap}.

The link between discrepancy and energy on the sphere has been first established by Stolarsky \cite{stol} who established an identity relating the spherical cap $L^2$ discrepancy and the sum of pairwise Euclidean distances between the points of $Z$. Identities of this type came to be called {\it{Stolarsky invariance principle}}. There has been an increase of activity on this subject in the recent years \cite{Br,BrDick,owen,skrig,skrig2,skrig3,BL}. In our companion paper \cite{BD1} we explore a number of variations of this principle and its applications to energy optimization, in particular, part \eqref{G3} of Theorem \ref{GeodExt}.

 Here we present a general form of the Stolarsky principal, which relates the discrepancy $D_{L^2, f}(Z)$, discrete energy $E_F (Z)$, and the energy integral $I_F (\sigma)$. We also apply this principle to  estimating the optimal discrepancy $\mathcal{D}_{L^2, f,N}$.

\begin{thm}\label{thm-3-2}  Let $\ld=\f{d-1}2$. Assume that  $F \in C[-1,1]$ and $f\in L_{w_\ld}^2[-1,1]$ satisfy relation \eqref{3-1}. \begin{enumerate}[\rm (i)]
\item\label{St1} (Stolarsky principle) Given a set of $N$-points $Z=\{z_1,\cdots, z_N\}\subset \sph$,
\begin{align}\label{e.stol}
    N^{-2} \sum_{i=1}^N \sum_{j=1}^N F(z_i\cdot z_j) =D^2_{L^2, f}(Z) +\int_{\sph}\int_{\sph} F(x\cdot y)\, d\s(x) d\s(y).
\end{align}
\item\label{St2}  There exist  constants $c_d,\, C_d >0$, such that  for any  $N\in\NN$,
\begin{align}
   C_d \min_{1\leq k\leq c_d N^{1/d}} \wh{F}(k,\ld)\leq   \mathcal{D}_{L^2, f, N}^2\leq  N^{-1}\max_ {0\leq t\leq c_d' N^{-\f1d}} \bl( F(1)-   F(\cos t)\br).\label{e.discrest}\end{align}
%\item If, in addition, $F$ is nonnegative, then \begin{equation}\label{}
%    \mathcal{D}_{L^2, f, N}^2\ge \f {F(1)}N -\wh{F}(0; \ld).
%\end{equation}
\end{enumerate}
\end{thm}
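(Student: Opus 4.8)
The plan is to prove the two parts of Theorem \ref{thm-3-2} separately, using the spherical harmonic machinery from \S\ref{prelim} together with the factorization \eqref{3-1}.

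\textbf{Part \eqref{St1} (the Stolarsky identity).} First I would substitute \eqref{3-1} into the left-hand side and use Fubini to write
$$N^{-2}\sum_{i,j} F(z_i\cdot z_j) = \int_{\sph}\Bigl(\f1N\sum_{j=1}^N f(z_j\cdot z)\Bigr)^2 d\sigma(z).$$
Then the standard ``add and subtract the mean'' trick applies: setting $g(z)=\f1N\sum_j f(z_j\cdot z)$ and $\bar g = \int_{\sph} g\, d\sigma = \int_{\sph} f(x\cdot y)\,d\sigma(y)$ (which is independent of $x$ by rotation invariance — I should note this), one expands $\int g^2 = \int (g-\bar g)^2 + \bar g^2$, because the cross term $\int (g-\bar g)\bar g\,d\sigma$ vanishes. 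The term $\int(g-\bar g)^2 d\sigma$ is exactly $D^2_{L^2,f}(Z)$ by definition, and $\bar g^2$ equals $\int\!\int F(x\cdot y)\,d\sigma(x)d\sigma(y)$ again by \eqref{3-1} and Fubini. This step is essentially bookkeeping; the only subtlety is justifying the Fubini applications, which is fine since $f\in L^2_{w_\lambda}$ and everything reduces to finite sums and a probability measure.

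\textbf{Part \eqref{St2}, the upper bound.} By \eqref{e.stol}, minimizing over $Z$ gives $\mathcal D^2_{L^2,f,N} = \min_Z N^{-2}\sum_{i,j}F(z_i\cdot z_j) - I_F(\sigma)$. Writing the diagonal terms separately, $N^{-2}\sum_{i,j}F(z_i\cdot z_j) = N^{-1}F(1) + N^{-2}\sum_{i\neq j}F(z_i\cdot z_j)$, so it suffices to exhibit a configuration $Z$ for which $N^{-2}\sum_{i\neq j}F(z_i\cdot z_j)$ is close to $I_F(\sigma)$ from below, with the right error. The natural choice is a well-separated configuration: take $Z$ to be a set of $N$ points with pairwise geodesic distances bounded below by $c_d' N^{-1/d}$ (such a maximal separated set exists by a volume/packing argument on $\sph$). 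For such points, writing $G(t):=F(1)-F(\cos t)\ge 0$ (nonnegativity is where $F\in\Phi_d$ is used, via monotonicity-type estimates — actually $G\ge 0$ just needs $F(1)=\max F$, which holds for positive definite $F$ since $F(1)=\sum\wh F(n;\lambda)a_n^d/\dots \ge |F(t)|$), and comparing $N^{-2}\sum_{i\neq j}F(z_i\cdot z_j)$ with $I_F(\sigma)$, one bounds the discrepancy by $N^{-1}\max_{0\le t\le c_d'N^{-1/d}}G(t)$. I expect the cleanest route is to observe $F\le F(1)$ pointwise so that $N^{-2}\sum_{i,j}F \le N^{-1}F(1) + I_F(\sigma)$ is the wrong direction; instead one should use that on a separated set the empirical pair measure avoids the small scales, and on scales $\ge c_d' N^{-1/d}$ one has $F(\cos t)\ge F(1) - \max_{[0,c_d'N^{-1/d}]}G$, combined with an averaging over rotations of the configuration to replace the $\sum_{i\ne j}$ average by the $\sigma\times\sigma$ average. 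This rotation-averaging step is the main technical point of the upper bound.

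\textbf{Part \eqref{St2}, the lower bound.} Here I would expand $D^2_{L^2,f}(Z)$ in spherical harmonics. Using \eqref{1-1-1} and the Funk--Hecke formula \eqref{e.FH}, the function $x\mapsto \int f(x\cdot y)d\sigma(y) - \f1N\sum_j f(x\cdot z_j)$ has spherical harmonic coefficients $-\f1N\wh f(n;\lambda)\sum_j Y_{n,j}(z_j)$ for $n\ge 1$ (and $0$ for $n=0$), so by Parseval
$$D^2_{L^2,f}(Z) = \f1{N^2}\sum_{n=1}^\infty \wh F(n;\lambda)\sum_{j=1}^{a_n^d}\Bigl(\sum_{i=1}^N Y_{n,j}(z_i)\Bigr)^2,$$
using $|\wh f(n;\lambda)|^2 = \wh F(n;\lambda)$. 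Now truncate to $1\le n\le M$ with $M\asymp N^{1/d}$ and bound $\wh F(n;\lambda)\ge \min_{1\le k\le M}\wh F(k;\lambda)$, reducing the problem to a lower bound for $\sum_{1\le n\le M}\sum_j(\sum_i Y_{n,j}(z_i))^2$. This last quantity, summed against the right weights, is controlled below by a ``quadrature defect'' argument: for $M$ of order $N^{1/d}$, $N$ points cannot form an exact quadrature rule of degree $M$, and quantitatively $\sum_{1\le n\le M}\sum_j(\sum_i Y_{n,j}(z_i))^2 \gtrsim N$ — this is the standard lower-bound mechanism behind discrepancy estimates (it can be extracted, e.g., by testing against a suitable positive-definite bump of degree $\le M$, or by the dimension count $\sum_{n\le M}a_n^d \asymp M^d \asymp N$ forcing a nontrivial defect). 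Choosing $c_d$ so that $M = \lfloor c_d N^{1/d}\rfloor$ makes this work, yielding $\mathcal D^2_{L^2,f,N}\ge C_d\min_{1\le k\le c_d N^{1/d}}\wh F(k;\lambda)$.

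\textbf{Main obstacle.} The identity in \eqref{St1} is routine. The genuine work is in \eqref{St2}: on the upper side, producing the separated configuration and correctly averaging over rotations to compare the discrete pair-average with $I_F(\sigma)$ within the claimed error $N^{-1}\max_{[0,c_d'N^{-1/d}]}(F(1)-F(\cos t))$; on the lower side, the quadrature-defect estimate $\sum_{1\le n\le c_dN^{1/d}}\sum_j(\sum_i Y_{n,j}(z_i))^2\gtrsim N$, which is the crux and is where the dimension $d$ enters through $a_n^d\sim n^{d-1}$. Both are classical in spirit (going back to Beck and to the quadrature literature), so I would cite or adapt those arguments rather than reprove them from scratch.
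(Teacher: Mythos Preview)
Your argument for part \eqref{St1} is correct and matches the paper's direct verification via \eqref{3-1}. Your lower bound for part \eqref{St2} is also on the right track and essentially coincides with the paper's proof: the paper makes your ``positive-definite bump'' precise by taking the Ces\`aro $(C,d+1)$ kernel $K_n$ (a nonnegative polynomial kernel of degree $n\sim N^{1/d}$), inserting the Ces\`aro weights $\bigl(A_{n-k}^{d+1}/A_n^{d+1}\bigr)^2\le 1$ into the Parseval expansion, and then showing $\int_{\sph}\bigl|1-N^{-1}\sum_j K_n(x\cdot z_j)\bigr|^2 d\sigma(x)\ge c_d$ via a partition/pigeonhole argument exploiting the positivity and lower bound $K_n(\cos\theta)\gtrsim n^d$ for $|\theta|\le 1/(2n)$. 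Your ``dimension count'' alternative is not a proof as stated.

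The genuine gap is in your \emph{upper bound}. First, ``averaging over rotations of the configuration'' does nothing: the pair sum $N^{-2}\sum_{i\ne j}F(z_i\cdot z_j)$ is rotation-invariant, so the rotated average equals the original sum and cannot be replaced by $I_F(\sigma)$ this way. Second, the inequality you write, ``on scales $t\ge c_d'N^{-1/d}$ one has $F(\cos t)\ge F(1)-\max_{[0,c_d'N^{-1/d}]}G$'', is false in general (take $F(t)=t$, so $G(t)=1-\cos t$ is increasing). A separated configuration alone gives no control on $\sum_{i\ne j}F(z_i\cdot z_j)$ from above. The paper's argument (following Kuijlaars--Saff) is different: take an area-regular partition $\{R_1,\dots,R_N\}$ with $\sigma(R_j)=1/N$ and $\mathrm{diam}(R_j)\le c_d N^{-1/d}$, and average $D^2_{L^2,f}(Z)$ over the product measure on $R_1\times\cdots\times R_N$ (jittered sampling). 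The cross terms vanish by independence, leaving $N^{-1}F(1)-\sum_j\int_{\sph}\bigl(\int_{R_j}f(x\cdot z)\,d\sigma(z)\bigr)^2 d\sigma(x)$; the inner expression equals $\int_{R_j}\int_{R_j}F(y\cdot z)\,d\sigma(y)d\sigma(z)\ge N^{-2}\min_{0\le t\le c_d N^{-1/d}}F(\cos t)$ by \eqref{3-1}, which yields the claimed upper bound. This averaging over a \emph{partition} --- not over rotations --- is the mechanism you are missing.
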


We make a few remarks before proceeding to the proof of this theorem. First of all, notice that \eqref{e.stol} implies the minimizing the discrete energy $E_F (Z)$ is equivalent to minimizing the   discrepancy $D_{L^2, f}(Z)$. Moreover, the square of this  discrepancy yields the difference between the discrete energy and the optimal energy integral -- a quantity which will be investigated deeper in the following section, \S\ref{s.disc}. We also observe that, while this approach is rather general, it is somewhat indirect, since it is not easy to explicitly find the function $f$ for a given $F$ (and vice versa). However, since $\wh{F}(n; \ld)=|\wh{f}(n,\ld)|^2$, the lower bound in \eqref{e.discrest} may be used to estimate both discrepancy and energy.

\begin{proof}  \eqref{St1}  This identity  can be  verified by a direct computation involving relation \eqref{3-1}. In fact,  an even more general form is proved in Theorem 5.10 of our parallel paper \cite{BD1}, which, in particular, gives an alternative proof of the fact that $\sigma$ minimizes $I_F$ for $F \in \Phi_d$.

\eqref{St2} We start with the proof  the upper estimate:
\begin{equation}\label{3-2}
     \mathcal{D}_{L^2, f, N}^2\leq  N^{-1}\max_ {0\leq t\leq c_d N^{-\f1d}} \bl( F(1)-   F(\cos t)\br).
\end{equation}
The   proof follows along the same lines as that of Theorem 1 of \cite{KS}. Let
$\{R_1, \cdots, R_N\}$ be a  partition of  $\sph$ such that (see, for instance, \cite[Sec. 6.4, p. 140]{DX})
$$
     \s(R_j)=\f1N \quad \text{and} \quad \text{diam} (R_j) \leq c_d N^{-\f1{d}},\  \  j=1,2,\cdots, N.
$$
 Denote by $\s_j^\ast$  the restriction of the measure $N {\s}$ to $R_j$, and
let
$\Omega^N=R_1\times \cdots \times R_N$ denote the product measure space with probability measure $d\s_1^\ast\times \cdots \times d\s_N^\ast$.
 Then
 \begin{align}
    \mathcal{D}_{L^2, f, N}^2
    &\leq
    N^{-2}\int_{\sph}\int_{\Omega^N}\Bl| \sum_{j=1}^N \Bl[f(x\cdot z_j)-\int_{R_j} f(x\cdot z)\, d\s_j^\ast(z)\Br]\Br|^2\times \notag\\
   & \   \hspace{30mm}\times d\s_1^\ast(z_1)\cdots d\s_N^\ast(z_N)\, d\s(x)\notag\\
       &= N^{-2}\int_{\sph}\sum_{j=1}^N \Bl[\int_{R_j} |f(x\cdot z)|^2 \, d\s_j^\ast(z)-\Bl(\int_{R_j} f(x\cdot z)\, d\s_j^\ast(z)\Br)^2\Br]\, d\s(x)\notag\\
    &=N^{-1}F(1)-\sum_{j=1}^N \int_{\sph} \Bl(\int_{R_j} f(x\cdot z)\, d\s(z)\Br)^2 \, d\s(x).\label{3-3}\end{align}
    Note, however, that  for each $1\leq j\leq N$,
    \begin{align}
\label{3-4}        \int_{\sph} \Bl(\int_{R_j} f(x\cdot z) & \, d\s(z)\Br)^2\, d\sigma (x) =\int_{\sph} \int_{R_j}\int_{R_j} f(x\cdot z) f(x\cdot y)\, d\s(z)\, d\s(y)\, d\s(x)\notag\\
         &=\int_{R_j}\int_{R_j} F(y\cdot z)\, d\s(y)\, d\s(z)\ge N^{-2} \min_{0\leq t\leq c_d N^{-\f1d}} F(\cos t).
    \end{align}
   Combining  \eqref{3-3} with \eqref{3-4}, we deduce  estimate \eqref{3-2}.

   Next, we prove the lower estimate:
   \begin{equation}\label{3-6-0}
     \mathcal{D}_{L^2, f, N}^2\ge  C_d \min_{1\leq k\leq c_d N^{1/d}} \wh{F}(k,\ld).
   \end{equation}

   Let $a>1$ be a large parameter depending only on $d$ such that $n:=a N^{1/d}$ is an integer.
Let
$K_{n}(t)$ denote the Ces\`aro kernel of order $d+1$ for the spherical harmonic expansions on $\sph$; that is,
$$ K_n(t)=\sum_{k=0}^n \f {A_{n-k}^{d+1}}{A_n^{d+1}} \f{k+\ld}{\ld} C_k^\ld(t),\   \ t\in [-1,1].$$
It is known that (see, for instance, \cite{BC} and \cite[Theorem 7.6.1, p. 389]{AAR})
$$ 0\leq K_{n}(\cos \t)\leq C n^d (1+n\t)^{-d-1},\   \  \forall \t\in [0,\pi].$$

We claim that for each $Z_N=\{z_1,\cdots, z_N\}\subset \sph$,
\begin{equation}\label{3-7-0}
    \int_{\sph}\Bl|1-N^{-1}\sum_{j=1}^N K_n(x\cdot z_j)\Br|^2\, d\s(x)\ge c_d>0.
\end{equation}
To see this, we first note that   Bernstein's inequality for trigonometric polynomials implies that
\begin{equation}\label{3-8-0}
  \min_{0\leq \t \leq \f 1{2n}}K_{n}(\cos \t)\ge \f12 K_{n}(1) =\f 12 \|K_n\|_\infty \ge c_d n^d.
\end{equation}
Next, let $\{R_1,\cdots, R_{ N_1}\}$ be an area-regular partition of $\sph$  such that $N_1=C_{d,a}N$, $\sa(R_j)=\f 1{N_1}$ and  $\text{diam} (R_j)\leq \f 1{2n}$ for $1\leq j\leq N_1$.
Set  $$\Ld:=\Bl\{j:\  \ 1\leq j\leq N_1,\   \ R_j\cap Z_N\neq \emptyset\Br\}.$$
 Using \eqref{3-8-0} and  positivity of the kernel $K_n$, we have that for each  $x\in R_j$ with  $j\in\Ld$,
\begin{align*}
   \f 1N \sum_{i=1}^N K_{n}(x\cdot z_i)& \ge \f 1N \sum_{z\in Z_N\cap R_j} K_{n}(x\cdot z)
   \ge c_d \f {n^d}N \cdot \# \{R_j\cap Z_N \} \\
   &=c_d a^d \cdot \# \{ R_j\cap Z_N\} >2\#\{ R_j\cap Z_N\},
\end{align*}
provided that the parameter $a$ is large enough.
It then follows that
\begin{align*}
    \int_{\sph} &\Bl|1-N^{-1} \sum_{i=1}^N K_{n} (x\cdot z_i)\Br|^2\, d\s(x)
    \ge \sum_{j\in\Ld}\int_{R_j} \Bl|1-\f 1N \sum_{i=1}^N K_{n} (x\cdot z_i)\Br|^2\, d\s(x)\\
    &\ge \f 1{N_1}\sum_{j\in\Ld} |\#(R_j\cap Z_N)|^2 \ge \f 1{N_1}\sum_{j\in\Ld} \#(R_j\cap Z_N)=\f N{N_1}\ge \f1{C_{d,a}}>0.
\end{align*}
This proves the claim \eqref{3-7-0}. We are now ready  to show the lower estimate \eqref{3-6-0}.  Recall that \begin{align*}
    f(x\cdot e) =\sum_{k=0}^\infty \wh{f}(k;\ld) \f {k+\ld}{\ld} C_k^\ld(x\cdot e),\    \  \forall e\in\sph,
\end{align*}
where the series converges in  the norm of $L^2(\sph)$,
and
$\wh{f}(0;\ld)=\int_{\sph} f(x\cdot z)\, d\s(z)$ for any $x\in\sph$. Thus, by orthogonality of spherical harmonics, we obtain
\begin{align*}
    \int_{\sph}&\Bl|\int_{\sph} f(x\cdot z)\, d\s(z)-N^{-1} \sum_{j=1}^N f(x\cdot z_j)\Br|^2\, d\s(x)\\
    & =\int_{\sph} \Bl|\sum_{k=1}^\infty \wh{f}(k,\ld)\f {k+\ld}\ld  N^{-1} \sum_{j=1}^N C_k^\ld (x\cdot z_j)\Br|^2\, d\sa(x)\\
    &=\sum_{k=1}^\infty \wh{F}(k,\ld)\Bl\|\f {k+\ld}\ld  N^{-1} \sum_{j=1}^N C_k^\ld (\la z_j,\cdot\ra)\Br\|_2^2.
    \end{align*}
    Since $\f {A_{n-j}^{d+1}}{A_{n}^{d+1}}\leq 1$ for all $0\leq j\leq n$, it follows that \begin{align}\label{3-9-0}
    \int_{\sph}&\Bl|\int_{\sph} f(x\cdot z)\, d\s(z)-N^{-1} \sum_{j=1}^N f(x\cdot z_j)\Br|^2\, d\s(x) \notag\\
    &\ge   \sum_{k=1}^n \wh{F}(k,\ld)\Bl|\f{A_{n-k}^{d+1}}{A_n^{d+1}}\Br|^2\Bl\|\f {k+\ld}\ld  N^{-1} \sum_{j=1}^N C_k^\ld (\la z_j,\cdot\ra)\Br\|_2^2,
    \end{align}
which, using \eqref{3-7-0}, is bounded below by
    \begin{align*}
          \Bl(\min_{1\leq k\leq n} & \wh{F}(k,\ld)\Br)\int_{\sph} \Bl|  1-N^{-1} \sum_{j=1}^N K_{n} (x\cdot z_j)\Br|^2\, d\s(x)\\
    &\ge C_d \min_{1\leq k\leq n} \wh{F}(k,\ld).
    \end{align*}
      This yields the desired lower estimate \eqref{3-6-0}.
\end{proof}

Using Theorem \ref{thm-3-2}, one  can give a new simpler proof of  a well-known result of  Beck \cite{Beck} regarding the lower estimate of the spherical cap discrepancy:
\begin{equation}\label{e.sphcap}
\DL(Z_N)^2:=\int_{-1}^1 \int_{\sph} \Bl| \f {\# (Z_N\cap B(x,t))}N -\sa(B(x,t))\Br|^2 \, d\s(x)\, dt,
\end{equation}
where  $Z_N:=\{z_1,\cdots, z_N\}$ is a set of $N$-distinct points on $\sph$ and
$$B(x,t):=\{y\in\sph:\  \ x\cdot y\ge t\},\   \ x\in\sph,\  \ t\in [-1,1].$$

\begin{cor}\label{beck} \cite[J.~Beck, 1984]{Beck} Given an arbitrary set $Z_N$ of $N$-distinct points on the sphere $\sph$,
$$\DL(Z_N)\ge C_d N^{-\f12-\f1{2d}}.$$
\end{cor}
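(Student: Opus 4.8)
The plan is to realize the spherical cap discrepancy as an integral, over the scale parameter, of discrepancies of the form $D_{L^2,f}$, and then to run the lower-bound machinery from the proof of Theorem~\ref{thm-3-2}\eqref{St2}. For $t\in[-1,1]$ set $f_t=\1_{[t,1]}$; since $\ld=\f{d-1}2>-\f12$ we have $f_t\in L_{w_\ld}^2[-1,1]$, and
$$\int_\sph f_t(x\cdot y)\,d\sa(y)=\sa(B(x,t)),\qquad \f1N\sum_{j=1}^N f_t(x\cdot z_j)=\f{\#(Z_N\cap B(x,t))}{N}.$$
Hence $D_{L^2,f_t}(Z_N)^2=\int_\sph\bigl|\sa(B(x,t))-N^{-1}\#(Z_N\cap B(x,t))\bigr|^2\,d\sa(x)$ and, by Fubini, $\DL(Z_N)^2=\int_{-1}^1 D_{L^2,f_t}(Z_N)^2\,dt$. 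Let $F_t(x\cdot y)=\int_\sph f_t(x\cdot z)f_t(z\cdot y)\,d\sa(z)$. By the characterization of $\Phi_d$ via \eqref{3-1}, $F_t$ is continuous, lies in $\Phi_d$, and $\wh{F_t}(k;\ld)=|\wh{f_t}(k;\ld)|^2\ge0$; thus $(F_t,f_t)$ satisfies the hypotheses of Theorem~\ref{thm-3-2}.

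The computational core is the bound $\int_{-1}^1\wh{F_t}(k;\ld)\,dt=\int_{-1}^1|\wh{f_t}(k;\ld)|^2\,dt\ge c_d\,k^{-d-1}$ for $k\ge1$. To obtain it, first integrate the Gegenbauer polynomial in closed form: writing the Gegenbauer equation in Sturm--Liouville form and using $\f{d}{dt}C_n^\ld=2\ld C_{n-1}^{\ld+1}$ gives
$$\int_t^1 C_n^\ld(\tau)(1-\tau^2)^{\ld-\f12}\,d\tau=\f{2\ld}{n(n+2\ld)}\,(1-t^2)^{\ld+\f12}\,C_{n-1}^{\ld+1}(t),$$
so $\wh{f_t}(n;\ld)$ is an $n$-independent constant times $n^{-2}C_n^\ld(1)^{-1}(1-t^2)^{\ld+\f12}C_{n-1}^{\ld+1}(t)$ (for $d=1$, i.e. $\ld=0$, one uses the limiting form $\wh{f_t}(n;0)=\f1{\pi n}\sin(n\arccos t)$). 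Squaring and integrating in $t$ over the fixed interval $[-\f12,\f12]$, on which $1-t^2$ is bounded below, the claim reduces to $\int_{-1/2}^{1/2}|C_{k-1}^{\ld+1}(t)|^2\,dt\ge c\,k^{2\ld}$ together with $C_k^\ld(1)\asymp k^{2\ld-1}$; the former follows from the Darboux asymptotics of Gegenbauer polynomials, which show that $C_{k-1}^{\ld+1}$ oscillates on $[-\f12,\f12]$ with amplitude $\asymp k^\ld$ and $\asymp k$ sign changes. Tracking exponents yields $\int_{-1}^1|\wh{f_t}(k;\ld)|^2\,dt\asymp k^{-2\ld-2}=k^{-d-1}$.

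With this estimate in hand the argument follows the proof of the lower bound \eqref{3-6-0}, the one change being that the integration in $t$ is carried out \emph{before} the coefficients are bounded. Fix $a>1$ (depending only on $d$) with $n:=aN^{1/d}\in\NN$, and let $K_n$ be the Ces\`aro kernel of order $d+1$. Expanding $f_t$ in Gegenbauer polynomials and using orthogonality exactly as in \eqref{3-9-0},
$$D_{L^2,f_t}(Z_N)^2\ \ge\ \sum_{k=1}^n\Bl(\f{A_{n-k}^{d+1}}{A_n^{d+1}}\Br)^2\wh{F_t}(k;\ld)\,\Bl\|\f{k+\ld}\ld N^{-1}\sum_{j=1}^N C_k^\ld(\la z_j,\cdot\ra)\Br\|_2^2.$$
Integrating over $t\in[-1,1]$, using $\int_{-1}^1\wh{F_t}(k;\ld)\,dt\ge c_dk^{-d-1}\ge c_dn^{-d-1}$ for $1\le k\le n$, and then recognizing the resulting sum via orthogonality of spherical harmonics,
$$\DL(Z_N)^2\ \ge\ c_d\,n^{-d-1}\sum_{k=1}^n\Bl(\f{A_{n-k}^{d+1}}{A_n^{d+1}}\Br)^2\Bl\|\f{k+\ld}\ld N^{-1}\sum_{j=1}^N C_k^\ld(\la z_j,\cdot\ra)\Br\|_2^2=c_d\,n^{-d-1}\int_\sph\Bl|1-N^{-1}\sum_{j=1}^N K_n(x\cdot z_j)\Br|^2\,d\sa(x).$$
By \eqref{3-7-0} the last integral is $\ge c_d>0$, so $\DL(Z_N)^2\ge c_d'\,n^{-d-1}\asymp N^{-(d+1)/d}$, and taking square roots gives $\DL(Z_N)\ge C_dN^{-1/2-1/(2d)}$.

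The main obstacle is the estimate $\int_{-1}^1|\wh{f_t}(k;\ld)|^2\,dt\ge c_d k^{-d-1}$: the closed form for the integrated Gegenbauer polynomial is routine, but extracting the matching lower bound $\int_{-1/2}^{1/2}|C_{k-1}^{\ld+1}|^2\asymp k^{2\ld}$ from the oscillation of Gegenbauer polynomials, uniformly in $k$ and with the correct dimensional exponent, is where the real work lies. A secondary point requiring care is that one must integrate in $t$ before estimating the coefficients: quoting \eqref{3-6-0} verbatim and then integrating $\min_{1\le k\le n}|\wh{f_t}(k;\ld)|^2$ over $t$ would be lossy, since for a fixed $t$ that minimum can fall far below $k^{-d-1}$ when $t$ is close to a zero of some $C_{k-1}^{\ld+1}$.
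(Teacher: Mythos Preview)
Your proof is correct and follows essentially the same route as the paper: write $\DL(Z_N)^2=\int_{-1}^1 D_{L^2,f_t}(Z_N)^2\,dt$ with $f_t=\1_{[t,1]}$, insert the lower bound \eqref{3-9-0} for each $t$, integrate in $t$ \emph{before} bounding the coefficients (a point the paper makes implicitly and you make explicit), and then invoke \eqref{3-7-0}. The only difference is how the key estimate $\int_{-1}^1|\wh{f_t}(k;\ld)|^2\,dt\asymp k^{-d-1}$ is justified: the paper writes $\wh{f_t}(n;\ld)=c_d(1-t^2)^{\ld+\f12}R_{n-1}^{\ld+1}(t)$ (with $c_d$ independent of $n$, which absorbs your factor $n^{-2}C_n^\ld(1)^{-1}$ via $C_{n-1}^{\ld+1}(1)$) and then quotes the known weighted $L^2$ asymptotics of Jacobi polynomials from Szeg\H{o} to get $\int_{-1}^1|R_{n-1}^{\ld+1}(t)|^2(1-t^2)^{2\ld+1}\,dt\sim n^{-2\ld-2}$, whereas you restrict to $[-\tf12,\tf12]$ and extract the lower bound from Darboux asymptotics. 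Both arguments give the same exponent; the paper's citation is shorter, your sketch is more self-contained.
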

\begin{proof}

Let $f_t(s)=\chi_{[t,1]}(s)$ for $t, s\in [-1,1]$. Using the formula,
$$\f {d}{dx} \Bl( C_{n-1}^{\ld+1}(x) (1-x^2)^{\ld+\f12}\Br)=-\f {n(n+2\ld)}{2\ld} C_n^\ld (x) (1-x^2)^{\ld-\f12},$$
we have
\begin{align*}
    \wh{f_t}(n;\ld)&=c_\ld\f{\Ga(n+1)}{\Ga(n+2\ld)}\int_t^1 C_n^\ld (x)(1-x^2)^{\ld-\f12}\, dx
    =c_d (1-t^2)^{\ld+\f12} R_{n-1}^{\ld+1} (t).
\end{align*}
This implies that
\begin{align}
    \int_{-1}^1 |\wh{f_t}(n;\ld)|^2 \, dt \sim \int_{-1}^1 \bl|R_{n-1}^{\ld+1}(t)\br|^2 (1-t^2)^{2\ld+1}\, dt\sim n^{-2\ld-2}=n^{-d-1},
\end{align}
where the second step uses the known estimates on integrals of Jacobi polynomials (see, for instance, \cite[ Ex.  91, p. 391]{Sz}).

On the other hand, using \eqref{3-9-0} and \eqref{3-7-0}, with $n\sim N^{1/d}$,  we have
\begin{align*}
   \DL(A)^2&= \int_{-1}^1 D_{L^2, f_t}(Z_N)^2\, dt\\
   &\ge
      \sum_{k=1}^n \int_{-1}^1 |\wh{f_t}(k,\ld)|^2\, dt\Bl|\f{A_{n-k}^{d+1}}{A_n^{d+1}}\Br|^2\Bl\|\f {k+\ld}\ld  N^{-1} \sum_{j=1}^N C_k^\ld (\la z_j,\cdot\ra)\Br\|_2^2\\
      &\ge  \Bl(\min_{1\leq k\leq n}  \int_{-1}^1  |\wh{f_t}(k,\ld)|^2\, dt\Br)\int_{\sph} \Bl|  1 -N^{-1} \sum_{j=1}^N K_{n} (x\cdot z_j)\Br|^2\, d\s(x)\\
    &\ge C_d \min_{1\leq k\leq n} \int_{-1}^1 |\wh{f_t}(k,\ld)|^2\, dt\ge c_d n^{-d-1}\sim N^{-1-\f1d}.
    \end{align*}
\end{proof}

%
%
%\begin{rem}One can also show the following estimate:
%\begin{equation}\label{3-5}
%   \mathcal{D}_{L^2, f, N}^2\leq C_d\sum_{j\ge c_d N^{\f 1d}} \wh{F}(j;\ld) j^{d-1}.
%\end{equation}
%  Indeed,    using  a recent result from \cite{BRV} on spherical $n$-designs, we can find  a set $Z_N=\{z_1, \cdots, z_N\}$ of $N$-distinct points on $\sph$ such that
%     \begin{equation}\label{3-6}
%     \int_{\sph} P(x)\, d\s(x)=\f 1N \sum_{j=1}^N P(z_j),\   \ \forall P\in\Pi_n^{d+1},\  \  \text{with $ n=c_d N^{\f1d}$}.
%     \end{equation}
%Setting
%$$ S_n f(t)=\sum_{j=0}^n \wh{f}(j;\ld) \f {j+\ld}{\ld} C_j^\ld (t),$$
% We deduce from \eqref{3-6},  that for any $x\in \sph$,
%\begin{align*}
%&\Bl|\int_{\sph} f(x\cdot y)\, d\s(y)-\f 1N \sum_{j=1}^N f(x\cdot z_j)\Br|
%=\f1N\Bl|\sum_{j=1}^N \bl(f(x\cdot z_j)-S_nf(x\cdot z_j)\br)\Br|.
%\end{align*}
%It follows by the Minkowskii inequality  that
%\begin{align*}
%    D_{L^2, f}(Z_N)\leq C \|f-S_n f\|_{2,\ld} \leq C \Bl(\sum_{j\ge c_d N^{\f1d}} |\wh{f}(j;\ld)|^2 j^{2\ld}\Br)^{1/2},
%\end{align*}
%which, combined with the fact that $\wh{F}(n;\ld)=|\wh{f}(n;\ld)|^2$, implies \eqref{3-5}.
%\end{rem}

\section{Discrete Riesz  energy}\label{s.disc}

Now that we understand that the energy $I_{d,\delta} (\sigma)$ is optimal for $-d<\delta < 1$, it is natural to investigate how well it can be approximated by discrete distributions.  We define the appropriate discrete energy, in accordance with \eqref{DE}.
\begin{defn}
For $\da>-d$ and $\da\neq 0$,  define
  the discrete  $\da$-energy of  a finite subset $Z_N=\{z_1,\cdots, Z_N\}$
of $N$ distinct points on $\sph$  by
 $$E_{d,\da}( Z_N):=\sum_{1\leq i<j\leq N} \rho(z_i, z_j)^{\da},
$$
where $\rho(x,y)=\arccos (x\cdot y)$ is the geodesic distance between $x$ and $y$ on $\sph$.
The discrete $N$-point  $\da$-energy of  $\sph$ is defined by
\begin{equation}\label{1-1-energy-ch14}
  \mathcal{E}_{d,\da}(  N) :=\inf_{Z_N} E_{d,\da} ( Z_N),
\end{equation}
where the infimum is taken over all $N$-point subsets of $\sph$ (and infimum is replaced by supremum for $\delta >0$).
\end{defn}

Notice that, when $\delta >0 $, we have $$ I_{d,\delta} \big( N^{-1} \sum_{i=1}^N \delta_{z_i} \big ) = \frac{2}{N^2}  E_{d,\delta} (Z_N) ,$$ while this energy integral is infinite for $\delta \le 0$ because of diagonal terms.

Our main result in this section can be stated as follows:

\begin{thm}\label{thm-1-4-energy-ch14}
Let $d\ge 2$. If $-d<\da<1$ and  $\da\neq  0$, then
\begin{equation}\label{e.difference}
      I_{d,\da} (\sa) - \frac{2}{N^2} \mathcal{E}_{d,\da}  (  N)\sim  N^{-1-\f \da d}.
\end{equation}
In the logarithmic case, $\delta =0$, we have the estimate
\begin{equation}\label{e.difference0}
      I_{d,0} (\sa) - \frac{2}{N^2} \mathcal{E}_{d,0} (  N)\sim N^{-1} \log N.
\end{equation}
\end{thm}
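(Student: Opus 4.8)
The plan is to pin down the order of $I_{d,\da}(\sa)-\frac{2}{N^2}\mathcal E_{d,\da}(N)$ by expressing it, via the discrepancy machinery of \secref{s.stol}, in terms of the Gegenbauer coefficients $\wh{F_\da}(n;\ld)$ of the potential, and then feeding in the sharp two-sided estimate $\wh{F_\da}(n;\ld)\asymp n^{-d-\da}$, which is the content of \lemref{lem-4-3} (with a logarithmic twist at $\da=0$). It is cleanest to treat the continuous range $0<\da<1$ separately, where $F_\da\in C[-1,1]$ and the result is essentially a corollary of \thmref{thm-3-2}, from the singular range $-d<\da\le 0$, where $F_\da(t)\to+\infty$ as $t\to 1$ and both halves of the two-sided estimate must be argued by hand.

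\textbf{The range $0<\da<1$.} Here $F_\da(1)=0$ and, by \lemref{lem-2-1}(i), $\wh{F_\da}(n;\ld)<0$ for every $n\ge 1$. I would set $H_\da:=\pi^\da-F_\da$, so that $\wh{H_\da}(0;\ld)=\pi^\da-I_{d,\da}(\sa)\ge 0$ and $\wh{H_\da}(n;\ld)=-\wh{F_\da}(n;\ld)>0$ for $n\ge1$; thus $H_\da\in\Phi_d$ and the factorization lemma of \secref{s.stol} supplies $f\in L^2_{w_\ld}[-1,1]$ with $H_\da(x\cdot y)=\int_\sph f(x\cdot z)f(z\cdot y)\,d\sa(z)$. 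Since the diagonal terms vanish ($F_\da(1)=0$) one has $\sum_{i,j}H_\da(z_i\cdot z_j)=\pi^\da N^2-2E_{d,\da}(Z)$ and $I_{H_\da}(\sa)=\pi^\da-I_{d,\da}(\sa)$, so the Stolarsky identity \eqref{e.stol} collapses to
$$
   I_{d,\da}(\sa)-\frac{2}{N^2}E_{d,\da}(Z)=D_{L^2,f}^2(Z),
$$
and, as $\mathcal E_{d,\da}(N)=\sup_Z E_{d,\da}(Z)$ for $\da>0$, taking the supremum gives $I_{d,\da}(\sa)-\frac{2}{N^2}\mathcal E_{d,\da}(N)=\mathcal D_{L^2,f,N}^2$. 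Now I would invoke \eqref{e.discrest}: the upper bound is the elementary computation $N^{-1}\max_{0\le t\le c_d'N^{-1/d}}\bl(H_\da(1)-H_\da(\cos t)\br)=N^{-1}\max_{0\le t\le c_d'N^{-1/d}}t^\da\asymp N^{-1-\da/d}$, while the lower bound is $C_d\min_{1\le k\le c_dN^{1/d}}\wh{H_\da}(k;\ld)=C_d\min_{1\le k\le c_dN^{1/d}}\bl(-\wh{F_\da}(k;\ld)\br)$, which by \lemref{lem-4-3} is $\gtrsim(c_dN^{1/d})^{-d-\da}\asymp N^{-1-\da/d}$ since $k^{-d-\da}$ is decreasing. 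This proves \eqref{e.difference} for $0<\da<1$.

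\textbf{The range $-d<\da\le 0$.} The two inequalities in \eqref{e.difference}, \eqref{e.difference0} will be obtained separately. For the bound $I_{d,\da}(\sa)-\frac{2}{N^2}\mathcal E_{d,\da}(N)\gtrsim N^{-1-\da/d}$ I would exhibit a near-optimal configuration: take an area-regular partition $\sph=\bigcup_{j=1}^N R_j$ with $\sa(R_j)=1/N$, $\diam R_j\le c_dN^{-1/d}$ (as in the proof of \thmref{thm-3-2}) and a point $z_j$ drawn uniformly in $R_j$; then $\mathbb E\,E_{d,\da}(Z)=\frac{N^2}{2}\bl(I_{d,\da}(\sa)-\sum_j\int_{R_j}\int_{R_j}\rho(x,y)^\da\,d\sa\,d\sa\br)$, and since $\da<0$ forces $\rho^\da\ge(c_dN^{-1/d})^\da$ on $R_j\times R_j$, some $Z$ satisfies $\frac{2}{N^2}E_{d,\da}(Z)\le I_{d,\da}(\sa)-cN^{-1-\da/d}$ (for $\da=0$ replace $\rho^\da\ge(c_dN^{-1/d})^\da$ by $\log(\pi/\rho)\ge\frac1d\log N-O(1)$, producing the factor $\log N$; here $d\ge2$ guarantees $I_{d,0}(\sa)<\infty$ and enters the error terms). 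For the reverse inequality — equivalently, $\frac{2}{N^2}E_{d,\da}(Z)\ge I_{d,\da}(\sa)-CN^{-1-\da/d}$ for \emph{every} $Z$ — the idea is to bound $F_\da$ below by a bounded positive-definite potential: with $n\asymp N^{1/d}$ and a smooth cutoff $\phi$ supported in a small interval $[0,2a]$, $\phi\equiv1$ on $[0,a]$, vanishing to sufficiently high order at $0$, set $\Psi_n(\cos\rho):=\rho^\da\bl(1-\phi(n\rho)\br)$; this is bounded (the singularity is cancelled, $\Psi_n(1)=0$), obeys $0\le\Psi_n(\cos\rho)\le\rho^\da$, and has $I_{\Psi_n}(\sa)\ge I_{d,\da}(\sa)-C(a/n)^{d+\da}=I_{d,\da}(\sa)-C_aN^{-1-\da/d}$. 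The crucial — and delicate — point, where \lemref{lem-4-3} is used, is that $\Psi_n$ can be arranged to be positive definite: the Gegenbauer coefficients of the bump $\rho^\da\phi(n\rho)$, localized at scale $\asymp a/n$, are $O\bl((a/n)^{d+\da}\br)$ uniformly in $k$ and decay rapidly past degree $\asymp n/a$, so the lower bound $\wh{F_\da}(k;\ld)\gtrsim k^{-d-\da}$ dominates them for all $k$ once $a$ is small enough and, if necessary, the intermediate-degree frequencies are trimmed by a Ces\`aro mean (whose values must still be checked to lie below $F_\da$). Granting this, Proposition~\ref{prop-1-2} yields $I_{\Psi_n}(\mu)\ge I_{\Psi_n}(\sa)$ for all $\mu\in\mathcal B$, whence for $\mu_Z=\frac1N\sum_i\delta_{z_i}$,
$$
   \frac{2}{N^2}E_{d,\da}(Z)\ge\frac1{N^2}\sum_{i\ne j}\Psi_n(z_i\cdot z_j)=I_{\Psi_n}(\mu_Z)\ge I_{\Psi_n}(\sa)\ge I_{d,\da}(\sa)-C_aN^{-1-\da/d},
$$
which is exactly what is needed; the case $\da=0$ runs identically with $\rho^\da$ replaced by $\log(\pi/\rho)$ and powers of $n$ by powers of $\log n$.

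\textbf{Main obstacle.} All of the above is conditional on \lemref{lem-4-3}, the precise asymptotics $\wh{F_\da}(n;\ld)\asymp n^{-d-\da}$ for $\da\in(-d,1)\setminus\{0\}$ (and its logarithmic analogue at $\da=0$), and this is the genuinely hard step, closely mirroring — but much harder than — the Euclidean situation, where the analogous coefficients are essentially in closed form. One has to extract the leading behaviour of $\int_{-1}^1(\arccos t)^\da C_n^\ld(t)(1-t^2)^{\ld-\f12}\,dt$ uniformly in $n$, with careful control near both endpoints $t=\pm1$; this technical heart is carried out in \secref{s.tech}. The only other point requiring care is making the positive-definite minorant $\Psi_n$ in the singular range fully rigorous — the control of its intermediate-degree coefficients — but this becomes routine once \lemref{lem-4-3} is in hand.
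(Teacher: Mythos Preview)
For $0<\da<1$ your argument coincides with the paper's: put $F=(\pi/2)^\da-F_\da\in\Phi_d$ (by \lemref{lem-2-1}), apply the Stolarsky identity (part~\eqref{St1} of \thmref{thm-3-2}) to convert the energy gap into $\mathcal D^2_{L^2,f,N}$, and read off both bounds from part~\eqref{St2} of \thmref{thm-3-2} using \lemref{lem-4-3}. Your jittered-sampling construction for the bound $I_{d,\da}(\sa)-\tfrac{2}{N^2}\mathcal E_{d,\da}(N)\gtrsim N^{-1-\da/d}$ when $-d<\da\le0$ is likewise the paper's argument in spirit; the paper routes through the $\va$-regularized potentials $F_{-s,\va}$ and part~\eqref{St2} of \thmref{thm-3-2}, but the underlying construction is the same area-regular partition.

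The opposite inequality for $-d<\da\le0$ is where your plan departs, and here there is a real gap. First, \lemref{lem-4-3} is stated and proved only for $0<\da<1$; the asymptotic $\wh{F_\da}(k;\ld)\asymp k^{-d-\da}$ for $\da<0$ is never established, nor is it used, in the paper. Second, your minorant $\Psi_n(\cos\rho)=\rho^\da(1-\phi(n\rho))$ is not shown positive definite by the reasoning you sketch: the bump $G_n(\cos\rho)=\rho^\da\phi(n\rho)$ coincides with $\rho^\da$ near $\rho=0$ and therefore carries the full singularity of $F_\da$, so its Gegenbauer coefficients obey $\wh{G_n}(k;\ld)\asymp k^{-d-\da}$ for $k\gg n$ rather than decaying rapidly, and the sign of $\wh{\Psi_n}(k;\ld)=\wh{F_\da}(k;\ld)-\wh{G_n}(k;\ld)$ at high frequencies is unresolved. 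The paper bypasses all of this with an elementary device that uses only the qualitative positivity of part~\eqref{ll2} of \lemref{lem-2-1}. Writing $s=-\da$ and choosing the smallest integer $k$ with $s+k+1>d$, set
\[
g_\t(t)=\sum_{\ell=0}^k \frac{(s)_\ell}{\ell!}\,t^\ell\,(\t+t)^{-s-\ell};
\]
a telescoping computation gives $g_\t'(t)=-\tfrac{(s)_{k+1}}{k!}\,t^k(\t+t)^{-s-k-1}\le0$, so $\rho^{-s}=g_\rho(0)\ge g_\rho(\va)$ pointwise. Each kernel $(\rho+\va)^{-s-\ell}$ is continuous and positive definite by part~\eqref{ll2} of \lemref{lem-2-1}, hence $\sum_{i<j}(\rho(z_i,z_j)+\va)^{-s-\ell}\ge\tfrac{N^2}{2}I_{F_{-s-\ell,\va}}(\sa)-\tfrac{N}{2}\va^{-s-\ell}$; summing over $\ell$ and estimating $|g_\t(0)-g_\t(\va)|$ directly shows $\sum_\ell\tfrac{(s)_\ell\va^\ell}{\ell!}I_{F_{-s-\ell,\va}}(\sa)=I_{d,-s}(\sa)+O(\va^{d-s})$. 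With $\va=N^{-1/d}$ both error terms are $\asymp N^{-1-\da/d}$. Thus the only input from \secref{s.tech} the theorem actually consumes is the case $0<\da<1$; the logarithmic case is handled by the same $\va$-regularization with a single term.
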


\begin{rem}For the discrete $\da$-Riesz energy  defined with respect to the Euclidean distance on $\sph$, similar results were previously  established for $-d<\da<2$  in a series of papers (see  \cite[Proposition 2]{BHS},  \cite{Brauchart,RSZ,KS,wagner1,wagner2}).
\end{rem}

Recall that  for $\da>-d$ and $\da\neq 0$,
 \begin{align*}
    I_{d,\da}(\sa)=\int_{\sph}\int_{\sph} \rho(x,y)^\da \, d\s(x)d\s(y)=\f{\Ga(\f {d+1}2)}{\Ga(\f d2)\Ga(\f12)}\int_0^\pi \t^\da \sin^{d-1}\t\, d\t.
\end{align*}
 The following lemma is needed  in the proof of the lower estimates in  Theorem \ref{thm-1-4-energy-ch14}:

\begin{lem}\label{lem-4-3}  If  $\ld>0$ and  $0<\da<1$,  then
\begin{align*}
    -\int_0^\pi \t^\da R_n^\ld(\cos \t) (\sin\t)^{2\ld}\, d\t\sim n^{-2\ld-1-\da},\   \   \  n=1,2,\cdots,
\end{align*}
where $R_n^\ld(t)=\f{C_n^\ld(t)}{C_n^\ld(1)}$.
\end{lem}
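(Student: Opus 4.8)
\textbf{Proof proposal for Lemma \ref{lem-4-3}.}

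The plan is to establish the two-sided estimate by obtaining, separately, a matching upper bound for the absolute value of the integral and a matching lower bound for $-\int_0^\pi \t^\da R_n^\ld(\cos\t)(\sin\t)^{2\ld}\,d\t$. The natural starting point is to recognize the integral, up to a normalizing constant, as the Gegenbauer coefficient $\wh{F_\da}(n;\ld)$ of the potential $F_\da(t) = (\arccos t)^\da$ studied in Section \ref{s.gdei}; in particular part \eqref{ll1} of Lemma \ref{lem-2-1} already guarantees the sign, so the negative quantity in the statement is positive and the only issue is its order of magnitude. First I would split the interval $[0,\pi]$ at scale $1/n$: on $[0,1/n]$ one uses $|R_n^\ld(\cos\t)|\le 1$ and $\sin^{2\ld}\t\sim\t^{2\ld}$ to see this piece contributes $O(n^{-\da-2\ld-1})$, which is exactly the claimed size and therefore is harmless for the upper bound and must be shown not to cancel the main term for the lower bound. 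On $[1/n,\pi]$ I would insert the classical asymptotics for ultraspherical polynomials (the Mehler--Heine / Darboux-type formula, e.g. from \cite{Sz}): for $\t$ bounded away from $0$ and $\pi$, $R_n^\ld(\cos\t)\sim c_\ld\, n^{-\ld}(\sin\t)^{-\ld}\cos\big((n+\ld)\t - \ld\pi/2\big)+O(n^{-\ld-1}(\sin\t)^{-\ld-1})$, and a uniform version valid down to $\t\gtrsim 1/n$.

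The core of the matter is then the oscillatory integral $\int_{1/n}^{\pi}\t^\da (\sin\t)^{2\ld}\cdot n^{-\ld}(\sin\t)^{-\ld}\cos\big((n+\ld)\t-\tfrac{\ld\pi}2\big)\,d\t = n^{-\ld}\int_{1/n}^{\pi}\t^\da(\sin\t)^{\ld}\cos\big((n+\ld)\t - \tfrac{\ld\pi}2\big)\,d\t$. Because the amplitude $\t^\da(\sin\t)^\ld$ is smooth and nondegenerate on $(0,\pi)$ but has a singularity of type $\t^{\da+\ld}$ at the origin (with $0<\da<1$, so the exponent $\da+\ld$ is not an even integer), the leading contribution to this oscillatory integral comes from the endpoint behavior near $\t=0$. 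The standard device is to write $\t^\da(\sin\t)^\ld = \t^{\da+\ld}g(\t)$ with $g$ smooth and $g(0)\ne 0$, and apply the asymptotics of $\int_0^\infty \t^{\mu}g(\t)\cos(M\t + \phi)\,d\t$, which is of order $M^{-\mu-1}$ (a consequence of the Erdélyi-type lemma / integration by parts against the singular weight, or equivalently the Mellin transform of the fractional monomial — the relevant constant is $\Gamma(\mu+1)\sin\big(\tfrac\pi2(\mu+1)-\phi\big)M^{-\mu-1}$, which is nonzero precisely because $\mu+1 = \da+\ld+1$ avoids the bad residue). With $M = n+\ld$ and $\mu = \da+\ld$ this yields a contribution of exact order $n^{-\ld}\cdot n^{-\da-\ld-1} = n^{-2\ld-1-\da}$, with a nonzero constant, which gives both the upper and the lower bound of the asserted size; the error term $O(n^{-\ld-1}(\sin\t)^{-\ld-1})$ in the polynomial asymptotics and the interior contributions of the oscillatory integral are all of strictly smaller order, so they do not interfere. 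Checking that the $[0,1/n]$ piece and these lower-order pieces cannot conspire to cancel the main term is routine once the constant in the leading term is pinned down to be nonzero.

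The main obstacle I anticipate is purely technical bookkeeping with the ultraspherical asymptotics: one needs a form of the Darboux/Hilb-type expansion for $R_n^\ld(\cos\t)$ that is uniform for $\t\in[1/n,\pi-1/n]$ (with appropriate modification near $\t=\pi$, where the weight $(\sin\t)^{2\ld}$ vanishes and that endpoint is harmless), together with explicit control of the remainder. An alternative, perhaps cleaner, route that avoids uniform asymptotics is to integrate by parts using the differentiation identity $\tfrac{d}{dx}\big(C_{n-1}^{\ld+1}(x)(1-x^2)^{\ld+1/2}\big) = -\tfrac{n(n+2\ld)}{2\ld}C_n^\ld(x)(1-x^2)^{\ld-1/2}$ already invoked in the proof of Corollary \ref{beck}; this rewrites $\int_0^\pi \t^\da R_n^\ld(\cos\t)(\sin\t)^{2\ld}\,d\t$, after the substitution $x=\cos\t$, as (constant)$\cdot n^{-2}\int_{-1}^1 (\arccos x)^{\da}{}' \,C_{n-1}^{\ld+1}(x)(1-x^2)^{\ld+1/2}\,dx$ plus vanishing boundary terms, i.e. a Gegenbauer coefficient of order $\ld+1$ of the function $\da(\arccos x)^{\da-1}(1-x^2)^{-1/2}$; iterating brings the problem to a weight with an integrable, explicitly power-type singularity whose Gegenbauer coefficients can be estimated by the same endpoint analysis but with a milder amplitude. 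Whichever route is taken, I would present the estimate for a fixed representative (e.g. $\va=0$, since the lemma is stated only for $F_\da$) and note the constants depend only on $\ld$ and $\da$, which is all that is needed for the application in Theorem \ref{thm-1-4-energy-ch14}.
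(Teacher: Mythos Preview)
Your main proposal—replace $R_n^\lambda(\cos\theta)$ by its one-term Darboux approximation on $[1/n,\pi]$ and invoke an Erd\'elyi-type endpoint lemma—has a genuine gap in the treatment of the remainder. With your own error bound $O\bigl(n^{-\lambda-1}(\sin\theta)^{-\lambda-1}\bigr)$ for $R_n^\lambda(\cos\theta)$, the remainder's contribution to the full integral is
\begin{align*}
n^{-\lambda-1}\int_{1/n}^{\pi-\varepsilon}\theta^\delta(\sin\theta)^{\lambda-1}\,d\theta \,=\, O(n^{-\lambda-1}),
\end{align*}
since the $\theta$-integral converges (the integrand is $\sim\theta^{\delta+\lambda-1}$ near $0$). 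But $n^{-\lambda-1}$ is \emph{larger} than the target $n^{-2\lambda-1-\delta}$ for every $\lambda,\delta>0$, so the remainder swamps the main term you extracted. Going further in the Darboux expansion does not help: the $k$-th term carries an extra factor $(n\sin\theta)^{-k}$, and the same endpoint analysis shows it too contributes $c_k\,n^{-2\lambda-1-\delta}$—every term and every truncated remainder lands at the \emph{same} order, so pinning down a nonzero aggregate constant is equivalent to the original problem. The one-term Hilb (Bessel) formula has the same defect: its remainder on $[c/n,\pi-\varepsilon]$ also integrates to $O(n^{-\lambda-1})$. The $[0,1/n]$ piece you flagged is a symptom of this same scaling, not an isolated technicality you can dismiss as routine.

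The paper avoids approximating the polynomial altogether. It integrates by parts using the \emph{exact} relation $R_n^\lambda(\cos\theta)=-c_n^{-1}(\sin\theta)^{-1}\tfrac{d}{d\theta}R_{n+1}^{\lambda-1}(\cos\theta)$ (a rewriting of \eqref{5-1}) to \emph{decrease} $\lambda$ by one while gaining a factor $n^{-2}$, passing the operator $D=\tfrac{d}{d\theta}\tfrac{1}{\sin\theta}$ to the amplitude. The key computation is that $D\bigl(\theta^\delta\eta(\theta)(\sin\theta)^{2\lambda}\bigr) = (2\lambda-1+\delta)\,\theta^\delta\eta(\theta)(\sin\theta)^{2\lambda-2}$ plus terms with an extra power of $\theta$, so the iteration stays in standard form and the side terms are genuinely $O(n^{-1})$ smaller at each step. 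For integer $\lambda\ge 2$ this terminates after $\lambda$ steps at $c_n\,n^{-2\lambda}\int_0^{\pi/2}\theta^\delta\eta(\theta)\cos\bigl((n+\lambda)\theta\bigr)\,d\theta + O(n^{-2\lambda-2})$; one more integration by parts gives $c_n'\,n^{-2\lambda-1}\int_0^\pi\theta^{\delta-1}\sin\bigl((n+\lambda)\theta\bigr)\,d\theta$, which is explicitly $\sim n^{-2\lambda-1-\delta}$ with a positive constant. Non-integer $\lambda$ is reduced to an integer $\mu\in(\lambda,2\lambda+1)$ via the positive connection formula $(\sin\theta)^{2\lambda}R_n^\lambda=\sum_k\alpha_{k,n}^{\lambda,\mu}(\sin\theta)^{2\mu}R_{n+2k}^\mu$, combined with the sign information from Lemma~\ref{lem-2-1}. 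Your alternative route via the identity used in the proof of Corollary~\ref{beck} is in the right spirit, but note that it \emph{raises} $\lambda$ rather than lowering it, so the iteration neither terminates in an elementary integral nor preserves the standard weighted form; the paper's direction is what makes the scheme close.
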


We postpone the proof of Lemma \ref{lem-4-3} to  the next section. For the moment, we take it for granted, and proceed with the proof of Theorem \ref{thm-1-4-energy-ch14}.
\subsection{Proof of Theorem \ref{thm-1-4-energy-ch14} for $0<\da<1$}
  %Let $0<\da<1$.  
  By  part \eqref{ll1} of Lemma \ref{lem-2-1}, the function
$F(t)=\Bl(\f \pi2\Br)^\da -(\arccos t)^\da$ belongs to the class $\Phi_d$ for $\da\in (0,1)$.  Furthermore, by Lemma  \ref{lem-4-3}, we have that
\begin{equation}\label{3-17}
    \wh{F}(k; \ld)=-c_\ld \int_0^\pi \t^\da R_k^\ld(\cos\t) (\sin\t)^{2\ld}\, d\t \sim k^{-d-\da},\   \  k=1,2\cdots.
\end{equation}

 Hence, applying the Stolarsky principle (part \eqref{St1} of Theorem \ref{thm-3-2}),  we find  that for
$Z_N=\{z_1,\cdots, z_N\}\subset \sph$,
\begin{align*}
  \frac2{N^{2}} E_{d,\da} (  Z_N)&=  \f 1{N^2}\sum_{i=1}^N \sum_{j=1}^N\rho(z_i, z_j)^\da =I_{d,\da}(\sa) -D_{L^2, f}(Z_N)^2,
 \end{align*}
 and hence
 \begin{align*}
 I_{d,\da}(\sa)- \frac{2}{N^2} E_{d, \da} (  Z_N)& =   D^2_{L^2, f}(Z_N).
 \end{align*}
By  part \eqref{St2} of Theorem \ref{thm-3-2}, we have
  \begin{align*}
 \inf_{Z_N} D_{L^2, f}(Z_N)^2\leq  c_d N^{-1} \max_{0\leq \t\leq c N^{-\f 1d}}
     \t^\da\leq c_d N^{-1-\f\da d},
\end{align*}
whereas by \eqref{3-17} and   part \eqref{St2} of  Theorem \ref{thm-3-2}, for any $Z_N$,
\begin{align*}
 D_{L^2, f}(Z_N)^2\ge c  \min_{1\leq k \leq c_d N^{\f1d}} \wh{F}(k; \ld)\ge c_d N^{- 1-\f \da d}.
\end{align*}
This completes the proof of the theorem for $0<\delta<1$.

\subsection{Proof of  Theorem \ref{thm-1-4-energy-ch14} for $-d<\da<0$}
 For convenience, we  set  $\da=-s$ with $0<s<d$.    We start with the proof of the upper estimate:
\begin{equation}\label{4-4-0}
   \f 2{N^2}  \mathcal{E}_{d,-s} ( N) \leq I_{d,\da}(\sa)-c_d N^{-1-\f \da d}.
\end{equation}
Let  $\va\in (0,1)$, and set
$$F_{-s, \va}(t)=(\arccos t+\va)^{-s},\   \  t\in [-1,1].$$
 Then according to  Lemma \ref{lem-2-1},  $F_{-s,\va}\in\Phi_d$.
Thus, by the Stolarsky principle (part \eqref{St1} of Theorem \ref{thm-3-2}),
\begin{align}
    2 N^{-2} &\sum_{1\leq i<j\leq N}(\rho(z_i, z_j)+\va)^{-s} +N^{-1} \va^{-s}\label{3-11}\\
     &=D^2_{L^2, f_{-s,\va}}(Z_N) +\int_{\sph}\int_{\sph} (\rho(x,y)+\va)^{-s}\, d\s(x)\, d\s(y).\notag
\end{align}
%where $f_{s,\va}\in L^2_{w_\ld}[-1,1]$ satisfies
%$$F_{s,\va}(x\cdot y)=\int_{\sph}f_{s,\va} (x\cdot z) f_{s,\va}(y\cdot z)\, d\s(z),\  \ x, y\in\sph.$$
 We then use \eqref{3-11} and   part \eqref{St2} of Theorem \ref{thm-3-2}  to obtain
\begin{align*}
 \inf_{Z_N}     \sum_{1\leq i<j\leq N} (\rho(z_i, z_j)+\va)^{-s} &=\f 12 N^2 \, \mathcal{D}^2_{L^2, f_{-s,\va},N}  -\f 12 N \va^{-s} \\ & \,\,\,\,\,\,\,  +\f 12 N^2\int_{\sph}\int_{\sph} (\rho(x,y)+\va)^{-s}\, d\s(x)d\s(y)\\
    &\leq \f N 2 \Bl( F_{-s,\va}(1)-\min_{|\theta| \leq c_d N^{-\f1d}} F_{-s,\va}(\cos \theta)\Br) \\ & \,\,\,\,\,\,\,\, - \f N2\va^{-s} +\f {N^2} 2 I_{d,\da}(\sa)\\
    &=-\f 12 N \min_{0\leq t\leq c_d N^{-\f1d}} (t+\va)^{-s} +\f {N^2}2 I_{d,\da}(\sa)\\
    &=-\f 12 N  (c_d N^{-\f1d}+\va)^{-s} +\f {N^2}2 I_{d,\da}(\sa).
\end{align*}
Letting $\va\to 0$ yields the upper estimate \eqref{4-4-0}.\\

Next, we show the lower estimate. Let
 $Z_N=\{z_1,\cdots, z_N\}$ be an arbitrary    set of $N$-distinct points on $\sph$. We need to prove that
\begin{equation}\label{4-6-0}
    E_{d,-s} (Z_N) =\sum_{1\leq i<j\leq N} \rho(z_i, z_j)^{-s}  \ge \f {N^2}2 I_{d,-s}(\sa)-c_d N^{1+\f s d}.
\end{equation}

To this end, let $k$ be the smallest  positive integer such that $ s+k+1>d$.
For a fixed $\t\in (0,\pi]$, define $g_\t :[0, \pi]\to \RR$ by
$$ g_\t (t)=(\t +t)^{-s} + st (\t+t)^{-s-1} +\f {s(s+1)}2 t^2 (\t+t)^{-s-2} +\cdots +\f { (s)_{k}}{k!} t^k (\t+t)^{-s-k},$$
where $t\ge 0$. A straightforward calculation shows that
$$ g_\t'(t)=-\f{(s)_{k+1}}{k!} t^k (\t+t)^{-s-k-1}\leq 0,\   \ \forall t\ge 0.$$
In particular, this implies that for all $\t, t\in (0, \pi]$,
\begin{equation}\label{e.54}
  0\leq   g_\t (0)-g_\t (t)=\t^{-s}-g_\t(t) \leq \begin{cases} C \t^{-s},\   \     \text{if $0<\t\leq t$};\\
  C t^{k+1} \t^{-s-k-1}, \   \     \text{if $\t> t$}.
  \end{cases}
\end{equation}
It follows that for any $\va>0$,
\begin{align*}
   \sum_{1\leq i<j\leq N}& \rho(z_i, z_j)^{-s} \ge
   \sum_{\ell=0}^k \f {(s)_\ell \va^\ell}{\ell!} \sum_{1\leq i<j\leq N} (\rho(z_i, z_j) +\va)^{-s-\ell}\\
&\ge \f {N^2} 2 \sum_{\ell=0}^k \f {(s)_\ell \va^\ell}{\ell!} \int_{\sph}\int_{\sph}(\rho(x,y) +\va)^{-s-\ell}\, d\s(x)\, d\s(y) -C N\va^{-s},
\end{align*}
where we have used the fact that $F_{-s,\varepsilon} \in \Phi_d$ (by part \eqref{ll2} of Lemma \ref{lem-2-1}) and therefore the energy integral  is smaller than the discrete energy: this can be deduced either from Proposition \ref{prop-1-2} ($\sigma$ minimizes $I_{F_{-s,\va}}$) or from the Stolarsky principle \eqref{e.stol} (since $D^2_{L^2,f_{-s,\va} } (Z_N) \ge 0$).

On the other hand, a direct computation involving \eqref{e.54} shows that
\begin{align*}
    &\Bl|\sum_{\ell=0}^k \f {(s)_\ell \va^\ell}{\ell!} \int_{\sph}\int_{\sph}(\rho(x,y) +\va)^{-s-\ell}\, d\s(x)\, d\s(y)-\int_{\sph}\int_{\sph} \rho(x,y)^{-s}\, d\s(x)\, d\s(y)\Br|\\
   &=c \int_0^\pi (g_\t(0)-g_\t (\va))\sin^{d-1}\t\, d\t\\
  &\leq c \int_0^{\va} \t^{-s+d-1}\, d\t +c\va^{k+1}\int_{\va}^{\pi} \t^{-s-1-k}\sin^{d-1}\t\, d\t\leq c \va^{d-s}. \label{e.compares}
\end{align*}
Thus, putting the above together, we obtain
\begin{align*}
  \sum_{1\leq i<j\leq N} \rho(z_i, z_j)^{-s} &\ge \f {N^2} 2 I_{d,-s}(\sa)-C N\va^{-s} -c N^2\va^{d-s}.
\end{align*}
 Now
 setting $\va=N^{-\f 1d}$, we get the desired  lower estimate:
\begin{equation}\label{}
    E_{d,-s} (Z_N)\ge \f 12 N^2 I_{d, \da}(\sa) -c_d N^{1+\f s d}.
\end{equation}
%The idea of the proof of this part resonates with the proof in of this fact for the classical Riesz energies in \cite{Brauchart}.

\subsection{Proof of  Theorem \ref{thm-1-4-energy-ch14} in the logarithmic case $\delta=0$}

The proof is very similar to the previous case. We shall begin with the upper bound:
\begin{equation}\label{4-4-000}
   \f 2{N^2}  \mathcal{E}_{d,0} ( N) \leq I_{d,0}(\sa)-c_d N^{-1} \log N.
\end{equation}
Recall that  for   $\va\in [0,1)$, by Lemma \ref{lem-2-1},
$F_{0, \va}(t)=\log \big(\frac{\pi}{\arccos t+\va} \big) \in \Phi_d$. Hence, invoking Theorem \ref{thm-3-2}),
 %Then according to  Lemma \ref{lem-2-1},  $F_{-s,\va}\in\Phi_d$.
%Thus, by the Stolarsky principle (part \eqref{St1} of Theorem \ref{thm-3-2}),
\begin{align*}
  \inf_{Z_N}   \frac{2}{N^{2}} &\sum_{1\leq i<j\leq N} \log \bigg(\frac{\pi}{\rho(z_i, z_j)+\va} \bigg)  +N^{-1}  \log \frac{\pi}{\va} \\
     &=\mathcal D^2_{L^2, f_{-s,\va}}(Z) +\int_{\sph}\int_{\sph}  \log \bigg(\frac{\pi}{\rho(x,y)+\va} \bigg)\, d\s(x)\, d\s(y) \\
       &\leq  N^{-1}  \Bl( F_{0,\va}(1)-\min_{|\theta| \leq c_d N^{-\f1d}} F_{0,\va}(\cos \theta)\Br) +  I_{d,0}(\sa)\\
       &= N^{-1}  \log \frac{\pi}{\va}  + N^{-1} \log \bigg( \frac{\pi}{\va + c_d N^{-1/d}} \bigg) +   I_{d,0}(\sa),
    \end{align*}
which yields \eqref{4-4-000} as $\va \rightarrow 0$.\\

To prove the lower bound, we first observe that  $ \bigg| \log \frac{\pi}{\varepsilon + \theta} -  \log \frac{\pi}{\varepsilon + \theta} \bigg| \le \frac12 \varepsilon \theta^{-1}$  % \cdot \min \{\varepsilon^{-1}, \theta^{-1} \}$$
for $\varepsilon$, $\theta >0$.
This easily implies that  $$\displaystyle{\bigg| \int_{\sph} \int_{\sph} \bigg( \log \frac{\pi}{\varepsilon + \rho (x,y) } -  \log \frac{\pi}{\varepsilon + \rho (x,y) } \bigg) d\sigma (x) d\sigma (y) \bigg|  \le C\varepsilon. }$$
Therefore, using the fact that $\sigma$ minimizes $I_{F_{0,\varepsilon}}$,
\begin{align*}
   \sum_{1\leq i<j\leq N}& \log \frac{\pi}{\rho(z_i, z_j)}   \ge
    \sum_{1\leq i<j\leq N} \log \bigg(\frac{\pi}{\rho(z_i, z_j)+\va} \bigg)\\  &
      \ge  \f {N^2} 2  \int_{\sph} \int_{\sph}  \log \frac{\pi}{\varepsilon + \rho (x,y) } \, d\sigma (x) d\sigma (y)  -  \frac12 N \log \frac{\pi}{\varepsilon}   \\
&  \ge  \f {N^2} 2  \int_{\sph} \int_{\sph}  \log \frac{\pi}{  \rho (x,y) } \, d\sigma (x) d\sigma (y)  - C' N^2 \varepsilon - \frac12 N \log \frac{\pi}{\varepsilon} ,
\end{align*}
and the proof is concluded by choosing $\varepsilon = c N^{-1} \log N$.

\section{Proof  of Lemma \ref{lem-4-3}}\label{s.tech}

Recall that
$R_n^\ld(\cos t)=\f{C_n^{\ld}(\cos t)}{C_n^\ld(1)}$ for $\ld>0$, and
$R_n^\ld(\cos t)=\cos nt$ for $\ld=0$. We will use the following formula (\cite[p. 80-81]{Sz}):  \begin{align}\label{5-1}
  (R_n^\ld(t))'
  =\f{n(n+2\ld)}{2\ld+1}R_{n-1}^{\ld+1}(t)=c(n,\ld) R_{n-1}^{\ld+1}(t).  \end{align}

  For the rest of this section, we let $\eta\in C^\infty[0,\infty)$ be such that $\eta(t)=1$ for $0\leq t\leq \f\pi4$ and $\eta(t)=0$ for $t\ge \f \pi2$.

%For $\da>-d$ and $\da\neq 0$, we have
%\begin{align*}
%   \wh{F_\da}(n;\ld)& :=c_{n,\ld} \int_0^\pi \t^\da R_n^\ld(\cos \t) (\sin\t)^{2\ld}\, d\t,
%\end{align*}
%where
%$$ c_{n,\ld}=\f{\Ga(\ld+1)}
%{\Ga(\ld+\f12)\Ga(\f12)}.$$
\subsection{Upper estimate}
The upper estimate is a direct consequence of the following lemma:

\begin{lem}Let $g\in C^\infty [0,\pi]$ and $\ld>0$.
 If $-2\ld-1<\da\leq 1$ and $\da\neq 0$,  then
$$ \Bl|\int_0^{\pi} \ta^\da   g(\ta) R_n^\ld(\cos \t) (\sin \t)^{2\ld}\, d\t\Br|\leq C n^{-2\ld-1-\da}.$$
\end{lem}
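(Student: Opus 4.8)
The strategy is to split the integral at the scale $\theta \sim 1/n$ using the cutoff $\eta$ and to integrate by parts sufficiently many times on the "bulk" piece, exploiting the derivative formula \eqref{5-1} which turns $R_n^\lambda$ into $R_{n-1}^{\lambda+1}$ with a gain of a factor $n$ in the coefficient $c(n,\lambda) = \frac{n(n+2\lambda)}{2\lambda+1} \sim n^2$. First I would write
\begin{align*}
\int_0^\pi \theta^\delta g(\theta) R_n^\lambda(\cos\theta)(\sin\theta)^{2\lambda}\,d\theta = \int_0^\pi \theta^\delta g(\theta)\eta(\theta)R_n^\lambda(\cos\theta)(\sin\theta)^{2\lambda}\,d\theta + \int_0^\pi \theta^\delta g(\theta)(1-\eta(\theta)) R_n^\lambda(\cos\theta)(\sin\theta)^{2\lambda}\,d\theta.
\end{align*}
The second integral has a smooth $C^\infty$ weight supported away from $\theta = 0$ (where $R_n^\lambda$ is bounded by a negative power of $n$ once $\theta$ is bounded below — standard estimates for ultraspherical polynomials give $|R_n^\lambda(\cos\theta)| \le C(n\theta)^{-\lambda}$, even $C(n\theta)^{-\lambda-1/2}(\sin\theta)^{-1/2}$-type bounds), so repeated integration by parts (legitimate since the weight is smooth and compactly supported in $(0,\pi]$, using $(R_n^\lambda)' = c(n,\lambda)R_{n-1}^{\lambda+1}$ and noting $(\sin\theta)^{2\lambda}$ absorbs the relevant factors when written in terms of the $\lambda+1$ weight $(\sin\theta)^{2(\lambda+1)}$ via $\frac{d}{d\theta}\big((\sin\theta)^{2\lambda+1}\cos\theta\big)$-type identities) produces decay of arbitrary polynomial order in $n$, in particular $O(n^{-2\lambda-1-\delta})$ for all $\delta$ in the stated range.

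For the main term near the origin, I would first reduce to the model: since $g\eta$ is smooth and $(\sin\theta)^{2\lambda} = \theta^{2\lambda}(1 + O(\theta^2))$ near $0$, and since corrections of order $\theta^2$ only improve the estimate, it suffices to bound $\int_0^\pi \theta^{\delta + 2\lambda} h(\theta) R_n^\lambda(\cos\theta)\,d\theta$ where $h$ is smooth with compact support in $[0,\pi/2)$ and $h(0) \ne 0$ possible. I would then integrate by parts against $R_n^\lambda$. The cleanest route is to use the identity (derivable from \eqref{5-1}) expressing $R_n^\lambda(\cos\theta)(\sin\theta)^{2\lambda+1}$ or a related quantity as a derivative: indeed from $\frac{d}{dt}\big(C_{n-1}^{\lambda+1}(t)(1-t^2)^{\lambda+1/2}\big) = -\frac{n(n+2\lambda)}{2\lambda}C_n^\lambda(t)(1-t^2)^{\lambda-1/2}$ (the formula used in the proof of Corollary \ref{beck}), converting to the $\theta$ variable, one gets that $\theta^{2\lambda}$ times a smooth function times $R_n^\lambda(\cos\theta)$ is, up to a factor $n^{-2}$ and error terms, the $\theta$-derivative of $\theta^{2\lambda+2}$ times $R_{n-1}^{\lambda+1}(\cos\theta)$ times a smooth function. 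Each such step costs $n^{-2}$ but raises $\lambda \to \lambda+1$ and the power of $\theta$ by $2$, and crucially, when one finally stops and estimates crudely using $|R_m^{\lambda+j}(\cos\theta)| \le 1$ on $[0,\pi/2]$ together with the support restriction, the remaining integral $\int_0^{c} \theta^{\delta + 2\lambda + 2j}\,d\theta$ converges (for $\delta > -2\lambda-1$, which is assumed) and the net power of $n$ is $n^{-2j}$ — one chooses $j$ large enough. The subtlety is the boundary term at $\theta = 0$ in each integration by parts: after the first integration by parts the boundary term at $0$ involves $\theta^{\delta+2\lambda+1}$ evaluated at $0$, which vanishes precisely when $\delta > -2\lambda - 1$ (strict inequality, as assumed), so no boundary contribution arises; at $\theta = \pi$ the cutoff kills everything.

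\textbf{Main obstacle.} The genuine difficulty is handling the non-smoothness of $\theta^\delta$ at the origin for $\delta \notin 2\mathbb{Z}_{\ge 0}$: one cannot integrate by parts indefinitely because differentiating $\theta^\delta$ lowers the exponent and eventually creates a non-integrable singularity. The resolution — and the technical heart of the argument — is a careful bookkeeping showing that exactly $j$ integration-by-parts steps (where $j$ is the least integer with $2j > -2\lambda - 1 - \delta$, say, or simply $j$ chosen so $2j \ge 2\lambda + 1 + \delta$ is comfortably exceeded) suffice: each step is legitimate because the accumulated power $\delta + 2\lambda + 2(\text{step number})$ stays $> -1$ throughout (in fact one needs the running exponent to stay $> -1$ for integrability and, for the boundary term to vanish, $> 0$ after the relevant differentiation — both hold since $\delta > -2\lambda-1$ means $\delta + 2\lambda + 1 > 0$). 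One must also verify that $R_n^\lambda(\cos\theta)$ and its relatives remain uniformly bounded on the support of the cutoff, which is immediate from $|R_n^\lambda(t)| \le 1$ on $[-1,1]$. Assembling the bulk estimate, the near-origin estimate, and noting that the sharp exponent $-2\lambda - 1 - \delta$ emerges naturally from the balance (the model integral $\int_0^{1/n}\theta^{\delta+2\lambda}\,d\theta \sim n^{-2\lambda-1-\delta}$ already gives the right order, and the bulk contributes no more), completes the proof of the upper estimate; I expect the matching lower estimate to require a genuine asymptotic expansion of the integral rather than just bounds, which is presumably why the lemma statement only claims the upper bound and the full two-sided asymptotics of Lemma \ref{lem-4-3} is handled separately.
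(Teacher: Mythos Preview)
There is a genuine gap in your integration-by-parts scheme for the piece near the origin. You claim that each step ``costs $n^{-2}$ but raises $\lambda\to\lambda+1$'' and that at the end one may invoke $|R_{n-j}^{\lambda+j}(\cos\theta)|\le 1$. These two facts are incompatible. The identity you quote from the proof of Corollary~\ref{beck} gives, in the $\theta$-variable,
\[
C_n^{\lambda}(\cos\theta)(\sin\theta)^{2\lambda}=\frac{2\lambda}{n(n+2\lambda)}\,\frac{d}{d\theta}\Bigl(C_{n-1}^{\lambda+1}(\cos\theta)(\sin\theta)^{2\lambda+1}\Bigr),
\]
so the $n^{-2}$ factor is attached to the \emph{unnormalized} polynomials $C_n^\lambda$. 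Passing to $R_n^\lambda=C_n^\lambda/C_n^\lambda(1)$ one picks up $C_{n-1}^{\lambda+1}(1)/C_n^\lambda(1)\sim n^{2}$, so that
\[
R_n^{\lambda}(\cos\theta)(\sin\theta)^{2\lambda}=\alpha_n\,\frac{d}{d\theta}\Bigl(R_{n-1}^{\lambda+1}(\cos\theta)(\sin\theta)^{2\lambda+1}\Bigr),\qquad \alpha_n\sim \frac{1}{2\lambda+1},
\]
i.e.\ the constant is $O(1)$, not $O(n^{-2})$. Consequently your $j$-fold IBP followed by $|R_{n-j}^{\lambda+j}|\le 1$ yields only $O(1)$, not $O(n^{-2j})$. (A quick sanity check: your scheme would give $O(n^{-2j})$ for arbitrary $j$, hence super-polynomial decay, contradicting the lower bound in Lemma~\ref{lem-4-3} that you yourself anticipate.) Relatedly, your claim that the $\theta$-power ``raises by $2$'' each step is also off: differentiating the coefficient drops the bare exponent $\delta\to\delta-2$ while the weight goes $2\lambda\to 2\lambda+2$, so the near-origin behavior $\theta^{\delta+2\lambda}$ is unchanged.

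The paper's proof proceeds differently. For \emph{integer} $\lambda$ it goes in the opposite direction, \emph{lowering} the superscript to $0$: using \eqref{5-1} in the form $R_{m}^{\mu}(\cos\theta)=-\frac{1}{c(m+1,\mu-1)\sin\theta}\frac{d}{d\theta}R_{m+1}^{\mu-1}(\cos\theta)$ and the operator $D=\frac{d}{d\theta}\frac{1}{\sin\theta}$, $\lambda$ integrations by parts convert $R_n^\lambda$ into $\cos((n+\lambda)\theta)$ with a genuine gain of $n^{-2\lambda}$, and two further trigonometric IBPs give $n^{-2\lambda-2}$. The point you are missing is that $D^\lambda$ applied to $\theta^\delta\eta g\sin^{2\lambda}\theta$ is singular at $\theta=0$; the paper handles this with an \emph{$n$-dependent} cutoff $\xi_0(n\theta),\xi_1(n\theta)$ at scale $1/n$ (not the fixed $\eta$). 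The piece with $\xi_0$ gives exactly your model integral $\int_0^{c/n}\theta^{\delta+2\lambda}\,d\theta\sim n^{-2\lambda-1-\delta}$, while for the $\xi_1$ piece one tracks the derivatives of $\xi_1(n\theta)$ to recover the same rate. Finally, non-integer $\lambda$ is reduced to the integer case via the connection formula \eqref{5-6}--\eqref{5-7}, a step entirely absent from your plan.
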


\begin{proof}
  We first claim that it suffices to show that
\begin{equation}\label{5-2}
    \Bl|\int_0^{\f \pi 2} \t^\da \eta(\t) g(\t) R_n^\ld(\cos\t) (\sin\t)^{2\ld}\, d\t\Br|\leq C n^{-2\ld-1-\da}.
\end{equation}
Indeed, a slight modification of the proof of  \eqref{5-2} below shows  that for any $g_1\in C^\infty[0,\pi]$,
\begin{align}\label{5-3}
   & \Bl|\int_0^\pi  g_1(\t) R_n^\ld(\cos \t)(\sin\t)^{2\ld}\, d\t\Br|\leq C n^{-2\ld-2}.
\end{align}
Since $\t^\da (1-\eta(\t) ) g(\t)\in C^\infty [0, \pi]$, the desired upper estimates  follow directly from \eqref{5-2} and \eqref{5-3}.

Next, we show \eqref{5-2} in the case when $\ld$ is a positive integer.
Let $\xi_0\in C^\infty(\RR)$ be such that $\xi_0(t)=1$ for $|t|\leq \f 18$ and $\xi_0(t)=0$ for $|t|\ge \f 14$. Let $\xi_1=1-\xi_0$.  Clearly,
\begin{align*}
    \Bl|\int_0^{\pi/2}\t^{\da}\xi_0( \ta n) \eta(\ta) g(\ta) R_n^\ld(\cos \t) (\sin \t)^{2\ld}\, d\t\Br|&\leq C \int_0^{\f 1{4n}} \t^{2\ld+\da}\, d\t\leq C n^{-(2\ld+1+\da)}.
\end{align*}
Thus, it remains  to show that
\begin{align}\label{5-4}
    I_n:=\Bl|\int_0^{\pi/2}\t^{\da}\xi_1( \ta n) \eta(\ta) g(\ta) R_n^\ld(\cos \t) (\sin \t)^{2\ld}\, d\t\Br|&\leq C n^{-2\ld-1-\da}.
\end{align}

To show \eqref{5-4}, set
\begin{equation}\label{5-5}
  D:=\f {d}{d\t}\f 1{\sin\t}=\f 1{\sin\t} \f {d}{d\t}-\f {\cos\t}{\sin^2 \t}.
\end{equation}
Using \eqref{5-1} and    integration by parts $\ld+2$ times, we then obtain
\begin{align*}
    I_n &\leq C n^{-2\ld} \Bl|\int_0^{\f \pi 2} D^\ld\bl(\xi_1( \ta n) \t^{\da}\eta(\ta) g(\ta) \sin^{2\ld}\t\br)   \cos ((n+\ld)\t)\, d\t\Br|\\
    &\leq C n^{-2\ld-2}\Bl|\int_0^{\f \pi 2} \Bl( D^\ld\bl(\xi_1( \ta n) \t^{\da}\eta(\ta) g(\ta) \sin^{2\ld}\t\br)\Br)''   \cos ((n+\ld)\t)\, d\t\Br|.
\end{align*}
Since $\eta'$ is supported in $[\f \pi4, \f \pi2]$, it is easily seen that
\begin{align*}
  \Bl|\Bl(D^\ld\bl(\t^{\da}\xi_1( \ta n) \eta(\ta) g(\ta) \sin^{2\ld}\t\br)\Br)''\Br|\leq C \max_{0\leq j\leq \ld+2} n^{j} |\xi_1^{(j)}(n\t) |\t^{\da+j-2}.
\end{align*}
 Since $\xi_1'(n\t)$ is supported in $[\f 1{8n}, \f 1{4n}]$, it follows that for $1\leq j\leq \ld+2$,
\begin{align*}
  n^{-2\ld+j-2} \int_0^{\f \pi 2} |\xi_1'( \ta n)| \t^{\da+j-2}\, d\t\leq C n^{-2\ld+j-2} n^{-\da-j+1}=C n^{-2\ld-1-\da}.
\end{align*}
On the other hand, since $\xi_1(n\t)$ is supported in $[\f 1{8n}, \infty]$, we have
\begin{align*}
    n^{-2\ld-2}\int_0^{\pi/2} |\xi_1 (\t n)| \t^{\da-2} \, d\t&\leq C n^{-2-2\ld} \int_{\f 1{8n}}^{\pi/2} \t^{\da-2}\, d\t\leq C n^{-2\ld -\da-1}.
\end{align*}
Putting these together, we deduce \eqref{5-4} and hence prove \eqref{5-2} for integer $\ld$.

Finally, we show \eqref{5-2} for all $\ld>0$.
We will use the following formula on ultraspherical polynomials (\cite[(4.10.29), p. 99]{Sz})
\begin{align}\label{5-6}
    (\sin\t)^{2\l} R_n^\ld (\cos \t) =
    \sum_{k=0}^\infty \al_{k,n}^{\ld, \mu} R_{n+2k}^\mu (\cos \t) (\sin \t)^{2\mu},
\end{align}
where $0<\ld<\mu <2\ld+1$ and
 \begin{align}
    \al_{k,n}^{\ld,\mu}&=\f {\Ga(2\ld)\Ga(\mu) 2^{2(\mu-\ld)} (n+2k+\mu)   \Ga(n+k+\mu) \Ga(k+\mu-\ld)}{\Ga(2\mu)\Ga(\mu-\ld) \Ga(\ld)  k! \Ga(n+k+\ld+1)}\notag\\
    &\sim (n+k)^{\mu-\ld} (k+1)^{\mu-\ld-1}.\label{5-7}
 \end{align}
For any $\ld>0$, we can find an integer $\mu$ such that $\ld<\mu<2\ld+1$. Then $-1<-\da<2\ld+1<2\mu+1$. Using \eqref{5-6}, \eqref{5-7} and the already proven case of  \eqref{5-2}, we obtain
\begin{align*}
    &\Bl|\int_0^{\f \pi2} \t^\da \eta(\t) g(\t) R_n^\ld(\cos\t) (\sin\t)^{2\ld}\, d\t\Br|
    =\Bl|\sum_{k=0}^\infty \al_{k,n}^{\ld, \mu} \int_0^{\f \pi2} \t^\da \eta(\ta)R_{n+2k}^\mu (\cos \t) (\sin \t)^{2\mu}\, d\t\Br|\\
    &\leq C \sum_{k=0}^\infty  (n+k)^{\mu-\ld} (k+1)^{\mu-\ld-1} (n+2k)^{-2\mu-1-\da}\leq C n^{-2\ld-1-\da}.
\end{align*}
\end{proof}

\subsection{Lower estimate}

In this subsection, we shall prove the lower estimates: for $0<\da<1$,
\begin{equation}\label{5-8}
    \Bl|\int_0^\pi \t^\da R_n^\ld (\cos\t) (\sin \t)^{2\ld}\, d\t\Br|\ge C n^{-2\ld-1-\da}.
\end{equation}
According to Lemma \ref{lem-2-1}, it suffices to prove \eqref{5-8} as $n\to\infty$.  Next, we assert that if  \eqref{5-8} holds for some $\ld=\mu>0$, then it holds for all $\max\{\f {\mu-1}2,0\}<\ld<\mu$. Indeed, this follows directly from    \eqref{5-7} and Lemma \ref{lem-2-1}:  for $n\ge 1$,
\begin{align*}
     \Bl|\int_0^\pi& \t^\da R_n^\ld(\cos\t) (\sin\t)^{2\ld}\, d\t\Br| =\sum_{k=0}^\infty \al_{k,n}^{\ld, \mu} \Bl|\int_0^\pi \t^\da  R_{n+2k}^\mu (\cos \t) (\sin \t)^{2\mu}\, d\t\Br|\\
    &\ge c \sum_{k=0}^n (n+k)^{\mu-\ld} (k+1)^{\mu-\ld-1} (n+2k)^{-2\mu-1-\da}\ge c n^{-2\ld-1-\da}.
\end{align*}
Thus, according to this last assertion, it suffices to prove \eqref{5-8} in the case when $\ld \ge 2$ is an integer.
Third,  recall that $\eta\in C^\infty [0,\infty)$ is such that $\eta(t)=1$ for $0\leq t\leq \f \pi4$ and $\eta(t)=0$ for $t\ge \f \pi2$, hence, according to  \eqref{5-3},
$$\Bl|\int_0^\pi \t^\da (1-\eta(\t)) R_n^\ld(\cos\t) (\sin \t)^{2\ld}\, d\t\Br|\leq C n^{-2\ld-2}.$$

Putting the above together, we reduce to showing that if  $\ld\ge 2$ is an integer and  $0<\da<1$, then
\begin{equation}\label{5-9}
    \Bl|\int_0^\pi \t^\da \eta(\t) R_n^\ld(\cos\t) (\sin \t)^{2\ld}\, d\t\Br|\ge  C n^{-2\ld-1-\da},\   \  \text{as $ n\to\infty$}.
\end{equation}

For the rest of the proof, we use the notation $c_n$ to denote a positive constant depending on $n$ such that $c_n\sim 1$ as $n\to\infty$.

First, we observe from  \eqref{5-2} for any  $\be\ge 1$, $g\in C^\infty [0,\pi]$ and $\mu\in\NN$,
\begin{equation}\label{5-10}
  \Bl|\int_0^\pi  g(\t) \t^{\be} R_n^{\mu}(\cos\t)(\sin \t)^{2\mu}\, d\t\Br|\leq C_{\mu,g} n^{-2\mu-2}.
\end{equation}
Thus, using \eqref{5-1}, \eqref{5-5}, \eqref{5-10} and integration by parts, we obtain
\begin{align*}
 I_n^\da&:=  \int_0^\pi \t^\da \eta(\t) R_n^\ld(\cos\t) (\sin \t)^{2\ld}\, d\t=c_n n^{-2}\int_0^{\f\pi2} D\Bl(\t^{\da} \eta(\t) \sin^{2\ld}\t\Br) R_{n+1}^{\ld-1} (\cos \t)  \, d\t\\
    &=c_n n^{-2} \int_0^{\f\pi2} \t^\da (\sin\t)^{2\ld-2} \eta(\t)\Bl[ \f{\da \sin \t}{\t} +(2\ld-1)\cos\t\Br]  R_{n+1}^{\ld-1} (\cos \t) \, d\t+O(n^{-2\ld-2})\\
    &=c_n n^{-2}(2\ld-1+\da) \int_0^{\f\pi2} \t^\da (\sin\t)^{2\ld-2} \eta(\t) R_{n+1}^{\ld-1} (\cos \t) \, d\t+O(n^{-2\ld-2}),\end{align*}
    where we recall that  $c_n>0$ and $c_n\sim 1$.
    Continuing in this way  $\ld$ times, we obtain  \begin{align*}
    I_n^\da&=c_n n^{-2\ld} \int_0^{\f \pi2}
    \t^{\da} \eta(\t) \cos ((n+\ld)\t) d\t+O(n^{-2\ld-2}),\end{align*}
    where $c_n'\sim 1$.
   Now  using integration by parts once again, we deduce
    \begin{align*} I_n^\da
    &=-c_n'' n^{-2\ld-1} \int_0^{\pi}
    \t^{\da-1} \eta(\t) \sin ((n+\ld)\t) d\t+O(n^{-2\ld-2}).\end{align*}
    Since
    $\f {\eta(\t)-1}\t=\f{\eta(\t)-\eta(0)}\t\in C^\infty [0,\pi]$, it follows that
    \begin{align*}
      n^{-2\ld-1} \Bl|\int_0^{\pi}
    \t^{\da-1} (\eta(\t)-1) \sin ((n+\ld)\t) d\t\Br|\leq C n^{-2\ld-2}.
    \end{align*}
    This implies that
    \begin{align*}
    -I_n^\da &=c_n'' n^{-2\ld-1} \int_0^{\pi}
    \t^{\da-1}  \sin ((n+\ld)\t) d\t+O(n^{-2\ld-2})\\
    &=c_n''' n^{-2\ld-1-\da} \int_0^{(n+\ld) \pi}
    \t^{\da-1}  \sin \t d\t+O(n^{-2\ld-2})\\
    &\ge c n^{-2\ld-1-\da} \int_0^{2\pi}
    \t^{\da-1}  \sin \t \, d\t+O(n^{-2\ld-2})
    \ge c n^{-2\ld-1-\da}.
\end{align*}
This shows the desired lower estimate.

\end{document}